\def\mylabelonoff{off}
\def\allowdisbrkyesno{yes}
\def\numberingtheoremsectionyesno{yes}
\def\numberingequationsectionyesno{yes}
\def\pagesizeextendednormal{extended}
\def\reportudemathyesno{no}
\def\reportudemathnumber{SM-UDE-805}
\def\reportudemathyear{2016}
\def\reportudematheingang{\mydate}
\def\mytitle{On Closed and Exact $\Gradgrad$- and $\divDiv$-Complexes,\\
Corresponding Compact Embeddings for Tensor Rotations,\\
and a Related Decomposition Result for Biharmonic Problems in 3D}
\def\mytitlerepude{\mytitle}
\def\myshorttitle{$\Gradgrad$- and $\divDiv$-Complexes and Applications}
\def\myauthorone{Dirk Pauly}
\def\myauthortwo{Walter Zulehner}
\def\myauthors{\myauthorone\quad\&\quad\myauthortwo}
\def\myaddressone{Fakult\"at f\"ur Mathematik,
Universit\"at Duisburg-Essen, Campus Essen, Germany}
\def\myaddresstwo{Institut f\"ur Numerische Mathematik, 
Johannes Kepler Universit\"at Linz, Austria}
\def\myemailone{dirk.pauly@uni-due.de}
\def\myemailtwo{zulehner@numa.uni-linz.ac.at}
\def\mykeywords{biharmonic equations, Helmholtz decomposition, Hilbert complexes}
\def\mysubjclass{35G15, 58A14}
\def\mydate{\today}
\def\mythanks{The research of the second author was supported by the Austrian Science Fund (FWF): project S11702-N23}
\newcommand{\mylabel}[1]{\label{#1}\fbox{{\sf #1}}}}
\newcommand{\mylabel}[1]{\label{#1}}}
\numberwithin{equation}{section}}
\newcommand{\ovl}[1]{\overline{#1}}
\newcommand{\X}{\mathsf{X}}
\newcommand{\Y}{\mathsf{Y}}
\DeclareMathOperator{\diam}{diam}
\newcommand{\set}[2]{\{#1\,:\,#2\}}
\newcommand{\setb}[2]{\big\{#1\,:\,#2\big\}}
\newtheorem{lem}{Lemma}[section]}
\newtheorem{lem}{Lemma}}
\newtheorem{defi}[lem]{Definition}
\newtheorem{theo}[lem]{Theorem}
\newtheorem{cor}[lem]{Corollary}
\newtheorem{rem}[lem]{Remark}
\newtheorem{genasslem}[lem]{General Assumption}
\newcommand{\om}{\Omega}
\newcommand{\ga}{\Gamma}
\newcommand{\bbT}{\mathbb{T}}
\newcommand{\bbS}{\mathbb{S}}
\newcommand{\bfM}{\mathbf{M}}
\newcommand{\bfN}{\mathbf{N}}
\newcommand{\bfI}{\mathbf{I}}
\newcommand{\bfF}{\mathbf{F}}
\newcommand{\bfPhi}{\mathbf{\Phi}}
\newcommand{\bfPsi}{\mathbf{\Psi}}
\newcommand{\reals}{\mathbb{R}}
\newcommand{\n}{\mathbb{N}}
\newcommand{\nz}{\n_{0}}
\newcommand{\rt}{\reals^{3}}
\newcommand{\rN}{\reals^{N}}
\newcommand{\rttt}{\reals^{3\times3}}
\newcommand{\foh}{\frac{1}{2}}
\newcommand{\oh}{\nicefrac{1}{2}}
\newcommand{\ot}{\leftarrow}
\newcommand{\To}{\longrightarrow}
\newcommand{\equi}{\Leftrightarrow}
\DeclareMathOperator{\sym}{sym}
\DeclareMathOperator{\skw}{skw}
\DeclareMathOperator{\dev}{dev}
\DeclareMathOperator{\tr}{tr}
\DeclareMathOperator{\supp}{supp}
\DeclareMathOperator{\A}{A}
\newcommand{\As}{\A^{*}}
\newcommand{\Aa}{\A^{\prime}}
\DeclareMathOperator{\cA}{\mathcal{A}}
\newcommand{\cAs}{\cA^{*}}
\DeclareMathOperator{\Pot}{P}
\DeclareMathOperator{\Potop}{P}
\renewcommand{\Pot}{\Potop}
\DeclareMathOperator{\p}{\partial}
\DeclareMathOperator{\grad}{grad}
\DeclareMathOperator{\Grad}{Grad}
\DeclareMathOperator{\gradc}{\overset{\circ}{\grad}}
\DeclareMathOperator{\Gradc}{\overset{\circ}{\Grad}}
\DeclareMathOperator{\rot}{rot}
\DeclareMathOperator{\rotc}{\overset{\circ}{\rot}}
\DeclareMathOperator{\Rot}{Rot}
\DeclareMathOperator{\Rotc}{\overset{\circ}{\Rot}}
\DeclareMathOperator{\divergence}{div}
\renewcommand{\div}{\divergence}
\DeclareMathOperator{\divc}{\overset{\circ}{\div}}
\DeclareMathOperator{\Div}{Div}
\DeclareMathOperator{\Divc}{\overset{\circ}{\Div}}
\DeclareMathOperator{\ed}{d}
\DeclareMathOperator{\cd}{\delta}
\newcommand{\csymbol}{\mathsf{C}}
\newcommand{\cic}{\cgen{\circ}{\infty}{}}
\newcommand{\Cic}{\Cgen{\circ}{\infty}{}}
\newcommand{\cicom}{\cic(\om)}
\newcommand{\Cicom}{\Cic(\om)}
\newcommand{\lsymbol}{\mathsf{L}}
\newcommand{\lgen}[3]{\overset{#1}{\lsymbol}{}^{#2}_{#3}}
\newcommand{\lt}{\lgen{}{2}{}}
\newcommand{\ltz}{\lgen{}{2}{0}}
\newcommand{\ltom}{\lt(\om)}
\newcommand{\Ltom}{\Lt(\om)}
\newcommand{\ltzom}{\ltz(\om)}
\newcommand{\hsymbol}{\mathsf{H}}
\newcommand{\hgen}[3]{\overset{#1}{\hsymbol}{}^{#2}_{#3}}
\newcommand{\hz}{\hgen{}{0}{}}
\newcommand{\ho}{\hgen{}{1}{}}
\newcommand{\htwo}{\hgen{}{2}{}}
\newcommand{\hm}{\hgen{}{m}{}}
\newcommand{\hmpo}{\hgen{}{m+1}{}}
\newcommand{\hoc}{\hgen{\circ}{1}{}}
\newcommand{\Hoc}{\Hgen{\circ}{1}{}}
\newcommand{\hzom}{\hz(\om)}
\newcommand{\hoom}{\ho(\om)}
\newcommand{\htom}{\htwo(\om)}
\newcommand{\hmom}{\hm(\om)}
\newcommand{\hmpoom}{\hmpo(\om)}
\newcommand{\Hoom}{\Ho(\om)}
\newcommand{\Htom}{\Htwo(\om)}
\newcommand{\Hocom}{\Hoc(\om)}
\newcommand{\hocom}{\hoc(\om)}
\newcommand{\htcom}{\htwoc(\om)}
\newcommand{\hmcom}{\hmc(\om)}
\newcommand{\hmmocom}{\hmmoc(\om)}
\newcommand{\hmpocom}{\hmpoc(\om)}
\newcommand{\hmo}{\hgen{}{-1}{}}
\newcommand{\hmoom}{\hmo(\om)}
\newcommand{\Hmoom}{\Hmo(\om)}
\newcommand{\hmt}{\hgen{}{-2}{}}
\newcommand{\hmtom}{\hmt(\om)}
\newcommand{\hmm}{\hgen{}{-m}{}}
\newcommand{\hmmpo}{\hgen{}{-m+1}{}}
\newcommand{\hmmmo}{\hgen{}{-m-1}{}}
\newcommand{\hmmmt}{\hgen{}{-m-2}{}}
\newcommand{\hmmom}{\hmm(\om)}
\newcommand{\Hmmom}{\Hmm(\om)}
\newcommand{\hmmpoom}{\hmmpo(\om)}
\newcommand{\hmmmoom}{\hmmmo(\om)}
\newcommand{\hmmmtom}{\hmmmt(\om)}
\newcommand{\rom}{\r(\om)}
\newcommand{\rcom}{\rc(\om)}
\newcommand{\Rczom}{\Rcz(\om)}
\newcommand{\rczom}{\rcz(\om)}
\newcommand{\rhmmom}{\rhmm(\om)}
\newcommand{\rhmozom}{\rhmoz(\om)}
\newcommand{\rhmmzom}{\rhmmz(\om)}
\newcommand{\Rom}{\R(\om)}
\newcommand{\Rcom}{\Rc(\om)}
\newcommand{\rzom}{\rz(\om)}
\newcommand{\Rzom}{\Rz(\om)}
\newcommand{\dom}{\d(\om)}
\newcommand{\Dom}{\D(\om)}
\newcommand{\dDom}{\dD(\om)}
\newcommand{\dDzom}{\dDz(\om)}
\newcommand{\dcom}{\dc(\om)}
\newcommand{\Dcom}{\Dc(\om)}
\newcommand{\dzom}{\dz(\om)}
\newcommand{\dczom}{\dcz(\om)}
\newcommand{\dhmoom}{\dhmo(\om)}
\newcommand{\dhmmom}{\dhmm(\om)}
\newcommand{\dhmozom}{\dhmoz(\om)}
\newcommand{\dhmmzom}{\dhmmz(\om)}
\newcommand{\Dzom}{\Dz(\om)}
\newcommand{\ggom}{\gg(\om)}
\newcommand{\ggcom}{\ggc(\om)}
\newcommand{\ggczom}{\ggcz(\om)}
\newcommand{\harmsymbol}{\mathcal{H}}
\newcommand{\harmgen}[3]{\overset{#1}{\harmsymbol}{}^{#2}_{#3}}
\newcommand{\harmd}{\harmgen{}{}{\mathsf{D}}}
\newcommand{\harmdom}{\harmd(\om)}
\newcommand{\harmn}{\harmgen{}{}{\mathsf{N}}}
\newcommand{\harmnom}{\harmn(\om)}
\newcommand{\norm}[1]{|#1|}
\newcommand{\bnorm}[1]{\big|#1\big|}
\newcommand{\normltom}[1]{\norm{#1}_{\ltom}}
\newcommand{\normLtom}[1]{\norm{#1}_{\Ltom}}
\newcommand{\normhoom}[1]{\norm{#1}_{\hoom}}
\newcommand{\normHoom}[1]{\norm{#1}_{\Hoom}}
\newcommand{\normhmoom}[1]{\norm{#1}_{\hmoom}}
\newcommand{\normhmmom}[1]{\norm{#1}_{\hmmom}}
\newcommand{\scp}[2]{\langle#1,#2\rangle}
\newcommand{\bscp}[2]{\big\langle#1,#2\big\rangle}
\newcommand{\scpltom}[2]{\scp{#1}{#2}_{\ltom}}
\newcommand{\scpLtom}[2]{\scp{#1}{#2}_{\Ltom}}
\newcommand{\scphmoom}[2]{\scp{#1}{#2}_{\hmoom}}
\newcommand{\scphmmom}[2]{\scp{#1}{#2}_{\hmmom}}
\newcommand{\scpHmmom}[2]{\scp{#1}{#2}_{\Hmmom}}
\newcommand{\preprintudemath}[5]{
\thispagestyle{empty}
\Large
\begin{center}SCHRIFTENREIHE DER FAKULT\"AT F\"UR MATHEMATIK\end{center}
\vspace*{5mm}
\begin{center}#1\end{center}
\vspace*{5mm}
\begin{center}by\end{center}
\begin{center}#2\end{center}
\vspace*{5mm}
\begin{center}#3\hspace{80mm}#4\end{center}
\newpage
\thispagestyle{empty}
\vspace*{210mm}
Received: #5
\newpage
\addtocounter{page}{-2}
\normalsize}
\title[\sc\myshorttitle]{\Large\sf\mytitle}
\author{\myauthorone}
\author{\myauthortwo}
\address{\myaddressone}
\email[\myauthorone]{\myemailone}
\address{\myaddresstwo}
\email[\myauthortwo]{\myemailtwo}
\keywords{\mykeywords}
\subjclass{\mysubjclass}
\date{\mydate}
\thanks{\mythanks}
\renewcommand{\H}{\hsymbol}
\DeclareMathOperator{\spn}{spn}
\DeclareMathOperator{\Az}{A_{0}}
\DeclareMathOperator{\Azs}{A_{0}^{*}}
\DeclareMathOperator{\Aza}{A_{0}^{\prime}}
\DeclareMathOperator{\cAz}{\mathcal{A}_{0}}
\DeclareMathOperator{\cAzs}{\mathcal{A}_{0}^{*}}
\DeclareMathOperator{\Ao}{A_{1}}
\DeclareMathOperator{\Aos}{A_{1}^{*}}
\DeclareMathOperator{\Aoa}{A_{1}^{\prime}}
\DeclareMathOperator{\cAo}{\mathcal{A}_{1}}
\DeclareMathOperator{\cAos}{\mathcal{A}_{1}^{*}}
\DeclareMathOperator{\At}{A_{2}}
\DeclareMathOperator{\Ats}{A_{2}^{*}}
\DeclareMathOperator{\cAt}{\mathcal{A}_{2}}
\DeclareMathOperator{\cAts}{\mathcal{A}_{2}^{*}}
\newcommand{\vecv}{v}
\newcommand{\vecw}{w}
\newcommand{\bfE}{\mathbf{E}}
\newcommand{\Pott}{\tilde{\Pot}}
\newcommand{\RM}{\mathsf{RM}}
\newcommand{\RT}{\mathsf{RT}}
\newcommand{\RTz}{\RT_{0}}
\renewcommand{\gradc}{\mathring{\grad}}
\renewcommand{\rotc}{\mathring{\rot}}
\renewcommand{\divc}{\mathring{\div}}
\renewcommand{\Gradc}{\mathring{\Grad}}
\renewcommand{\Rotc}{\mathring{\Rot}}
\renewcommand{\Divc}{\mathring{\Div}}
\newcommand{\RotcS}{\mathring{\Rot}_{\bbS}}
\newcommand{\DivcT}{\mathring{\Div}_{\bbT}}
\newcommand{\DivcS}{\mathring{\Div}_{\bbS}}
\newcommand{\Gradgrad}{\Grad\!\grad}
\newcommand{\divDiv}{\div\!\Div}
\newcommand{\divDivS}{\divDiv_{\bbS}}
\newcommand{\symRot}{\sym\!\Rot}
\newcommand{\symRotT}{\symRot_{\bbT}}
\newcommand{\devGrad}{\dev\!\Grad}
\newcommand{\symGrad}{\sym\!\Grad}
\newcommand{\symGradc}{\mathring{\symGrad}}
\newcommand{\Gradgradc}{\mathring{\Gradgrad}}
\renewcommand{\cic}{\mathring{\csymbol}{}^{\infty}}
\renewcommand{\Cic}{\cic}
\renewcommand{\hoc}{\mathring{\H}^{1}}
\renewcommand{\hocom}{\hoc(\om)}
\renewcommand{\htcom}{\mathring{\H}^{2}(\om)}
\newcommand{\htcomi}{\mathring{\H}^{2}(\om_{i})}
\renewcommand{\hmcom}{\mathring{\H}^{m}(\om)}
\renewcommand{\hmmocom}{\mathring{\H}^{m-1}(\om)}
\renewcommand{\hmpocom}{\mathring{\H}^{m+1}(\om)}
\newcommand{\hmmtcom}{\mathring{\H}^{m-2}(\om)}
\renewcommand{\rom}{\H(\rot,\om)}
\renewcommand{\rzom}{\H(\rot_{0},\om)}
\renewcommand{\rcom}{\H(\rotc,\om)}
\renewcommand{\rczom}{\H(\rotc_{0},\om)}
\renewcommand{\rhmozom}{\H^{-1}(\rot_{0},\om)}
\renewcommand{\rhmmom}{\H^{-m}(\rot,\om)}
\renewcommand{\rhmmzom}{\H^{-m}(\rot_{0},\om)}
\renewcommand{\dom}{\H(\div,\om)}
\renewcommand{\dzom}{\H(\div_{0},\om)}
\renewcommand{\dcom}{\H(\divc,\om)}
\renewcommand{\dczom}{\H(\divc_{0},\om)}
\renewcommand{\dhmoom}{\H^{-1}(\div,\om)}
\renewcommand{\dhmozom}{\H^{-1}(\div_{0},\om)}
\renewcommand{\dhmmom}{\H^{-m}(\div,\om)}
\renewcommand{\dhmmzom}{\H^{-m}(\div_{0},\om)}
\renewcommand{\Cicom}{\cicom}
\renewcommand{\Ltom}{\ltom}
\newcommand{\ltomS}{\lsymbol^2_{\bbS}(\om)}
\newcommand{\ltomT}{\lsymbol^2_{\bbT}(\om)}
\renewcommand{\Hoom}{\hoom}
\newcommand{\HoomS}{\H^1_{\bbS}(\om)}
\newcommand{\HoomT}{\H^1_{\bbT}(\om)}
\newcommand{\HoomiS}{\H^1_{\bbS}(\om_{i})}
\newcommand{\HoomiT}{\H^1_{\bbT}(\om_{i})}
\renewcommand{\Hoc}{\hoc}
\renewcommand{\Hocom}{\Hoc(\om)}
\newcommand{\HocomS}{\Hoc_{\bbS}(\om)}
\newcommand{\HocomT}{\Hoc_{\bbT}(\om)}
\newcommand{\HocomiS}{\Hoc_{\bbS}(\om_{i})}
\renewcommand{\Htom}{\htom}
\newcommand{\HtomS}{\H^{2}_{\bbS}(\om)}
\renewcommand{\Hmoom}{\hmoom}
\renewcommand{\Hmmom}{\hmmom}
\newcommand{\Hmmmoom}{\hmmmoom}
\renewcommand{\Rom}{\H(\Rot,\om)}
\renewcommand{\Rzom}{\H(\Rot_{0},\om)}
\renewcommand{\Rcom}{\H(\Rotc,\om)}
\renewcommand{\Rczom}{\H(\Rotc_{0},\om)}
\newcommand{\RomS}{\H_{\bbS}(\Rot,\om)}
\newcommand{\RcomS}{\H_{\bbS}(\Rotc,\om)}
\newcommand{\RczomS}{\H_{\bbS}(\Rotc_{0},\om)}
\newcommand{\RomT}{\H_{\bbT}(\Rot,\om)}
\newcommand{\RcomT}{\H_{\bbT}(\Rotc,\om)}
\renewcommand{\Dom}{\H(\Div,\om)}
\renewcommand{\Dzom}{\H(\Div_{0},\om)}
\renewcommand{\Dcom}{\H(\Divc,\om)}
\newcommand{\DomT}{\H_{\bbT}(\Div,\om)}
\newcommand{\DcomT}{\H_{\bbT}(\Divc,\om)}
\newcommand{\DczomT}{\H_{\bbT}(\Divc_{0},\om)}
\newcommand{\DomS}{\H_{\bbS}(\Div,\om)}
\newcommand{\DcomS}{\H_{\bbS}(\Divc,\om)}
\newcommand{\DcomiT}{\H_{\bbT}(\Divc,\om_{i})}
\newcommand{\devGom}{\H(\devGrad,\om)}
\newcommand{\devGzom}{\H(\devGrad_{0},\om)}
\newcommand{\symRom}{\H(\symRot,\om)}
\newcommand{\symRzom}{\H(\symRot_{0},\om)}
\newcommand{\symRomT}{\H_{\bbT}(\symRot,\om)}
\newcommand{\symRzomT}{\H_{\bbT}(\symRot_{0},\om)}
\newcommand{\symRzomiT}{\H_{\bbT}(\symRot_{0},\om_{i})}
\newcommand{\symRomiT}{\H_{\bbT}(\symRot,\om_{i})}
\newcommand{\RcomiS}{\H_{\bbS}(\Rotc,\om_{i})}
\newcommand{\RczomiS}{\H_{\bbS}(\Rotc_{0},\om_{i})}
\renewcommand{\dDom}{\H(\divDiv,\om)}
\newcommand{\dDomS}{\H_{\bbS}(\divDiv,\om)}
\renewcommand{\dDzom}{\H(\divDiv_{0},\om)}
\newcommand{\dDzomS}{\H_{\bbS}(\divDiv_{0},\om)}
\newcommand{\dDzomiS}{\H_{\bbS}(\divDiv_{0},\om_{i})}
\renewcommand{\ggom}{\H(\Gradgrad,\om)}
\renewcommand{\ggcom}{\H(\Gradgradc,\om)}
\renewcommand{\ggczom}{\H(\Gradgradc_{0},\om)}
\newcommand{\dDzmoomS}{\H^{0,-1}_{\bbS}(\divDiv,\om)}
\newcommand{\dDzmoomiS}{\H^{0,-1}_{\bbS}(\divDiv,\om_{i})}
\newcommand{\harmdSom}{\harmgen{}{}{\mathsf{D},\bbS}(\om)}
\newcommand{\harmnTom}{\harmgen{}{}{\mathsf{N},\bbT}(\om)}
\renewcommand{\bfE}{E}
\renewcommand{\bfF}{F}
\renewcommand{\bfM}{M}
\renewcommand{\bfN}{N}
\renewcommand{\bfI}{I}
\renewcommand{\bfPhi}{\Phi}
\renewcommand{\bfPsi}{\Psi}
\begin{document}


\ifthenelse{\equal{\reportudemathyesno}{yes}}
{\preprintudemath{\mytitlerepude}{\myauthors}{\reportudemathnumber}{\reportudemathyear}{\reportudematheingang}}
{}


\begin{abstract}
It is shown that the first biharmonic boundary value problem on a topologically trivial domain in 3D is
equivalent to three (consecutively to solve) second-order problems. 
This decomposition result is based on a Helmholtz-like decomposition 
of an involved non-standard Sobolev space of tensor fields and a proper characterization 
of the operator $\divDiv$ acting on this space. 
Similar results for biharmonic problems in 2D and their impact on the construction and analysis 
of finite element methods have been recently published in \cite{zulehner-2016-02}. 
The discussion of the kernel of $\divDiv$ leads to (de Rham-like) 
closed and exact Hilbert complexes, 
the $\divDiv$-complex and its adjoint the $\Gradgrad$-complex, 
involving spaces of trace-free and symmetric tensor fields.
For these tensor fields we show Helmholtz type decompositions
and, most importantly, new compact embedding results. 
Almost all our results hold and are formulated for general bounded
strong Lipschitz domains of arbitrary topology.
There is no reasonable doubt that our results extend to strong Lipschitz domains in $\rN$.
\end{abstract}

\maketitle
\tableofcontents


\section{Introduction}
\mylabel{introsec}

In \cite{zulehner-2016-02} it was shown that the fourth-order biharmonic boundary value problem
\begin{equation} 
\label{primal}
\Delta^2 u=f\quad\text{in }\om,\qquad
u=\p_nu=0\quad\text{on }\ga,
\end{equation}
where $\om$ is a bounded and simply connected domain in $\mathbb{R}^2$ with a
(strong) Lipschitz boundary\footnote{$\ga$ is locally a graph of a Lipschitz function.} 
$\ga$,
can be decomposed into three second-order problems. 
The first problem is a Dirichlet-Poisson problem for an auxiliary scalar field $p$
\begin{align*}
\Delta p&=f
\quad\text{in }\om,
&
p&=0
\quad\text{on }\ga,
\intertext{the second problem is a linear elasticity Neumann problem for an auxiliary vector field $\vecv$}
\Div(\symGrad\vecv)
&=-\grad p
\quad\text{in }\om,\hspace*{20mm}
&
(\symGrad\vecv)\,n=-p\,n
&=0
\quad\text{on }\ga,
\intertext{and, finally, the third problem is again a Dirichlet-Poisson problem for the original scalar field $u$}
\Delta u
&=2\,p+\div\vecv
\quad\text{in }\om,
&
u
&=0
\quad\text{on }\ga.
\intertext{Note that the second equation is equivalent to}
\Div(\symGrad\vecv+p\,\bfI)
&=0
\quad\text{in }\om,
&
(\symGrad\vecv+p\,\bfI)\,n
&=0
\quad\text{on }\ga.
\end{align*}
Here $f$ is a given right-hand side, $\Delta$, $n$, and $\p_n$ denote the Laplace operator, 
the outward normal vector to the boundary, and the derivative in this  direction, respectively. 
In matrix notation the latter system reads as the symmetric system
$$\begin{bmatrix}
2 & \div & -\mathring{\Delta} \\
-\gradc & -\DivcS\symGrad &  0 \\
-\mathring{\Delta} & 0 & 0 
\end{bmatrix}
\begin{bmatrix}
p \\[1.3ex]
\vecv \\[1.3ex]
u
\end{bmatrix}
=
\begin{bmatrix}
0 \\[1.3ex]
0 \\[1.3ex]
-f
\end{bmatrix}$$
with $\div^{*}=-\gradc$
and $\mathring{\Delta}=\div\gradc$.
Throughout this paper, `mathrings' indicate natural homogeneous boundary conditions for different operators.
While $-\mathring{\Delta}$ is continuously invertible, 
$-\DivcS\symGrad$ is not on its domain of definition $D(\symGrad)=\Hoom$, 
but on the more regular space 
$$\Hoom\cap N(\symGrad)^{\bot_{\ltom}}
=\Hoom\cap\RM^{\bot_{\ltom}},$$
which is easy to handle. We will see that the situation in $\rt$ is much more complicated.
The differential operators $\grad$, $\div$, and (for later use) $\rot$ denote 
the gradient of a scalar field, the divergence and the rotation of a vector field, respectively.
The corresponding capitalized differential operators $\Grad$, $\Div$, and $\Rot$ 
denote the row-wise application of $\grad$ to a vector field, $\div$ and $\rot$ to a tensor field. 
The prefix $\sym$ is used for the symmetric part of a matrix, 
for the skew-symmetric part we use the prefix $\skw$.
This decomposition is of triangular structure, i.e., 
the first problem is a well-posed second-order problem in $p$, 
the second problem is a well-posed second-order problem in $\vecv$ for given $p$, 
and the third problem is a well-posed  second-order problem in $u$ for given $p$ and $\vecv$. 
This allows to solve them consecutively analytically or numerically 
by means of techniques for second-order problems. 
 
This is - in the first place - a new analytic result for fourth-order problems. 
But it also has interesting implications for discretization methods applied to \eqref{primal}. 
It allows to re-interpret known finite element methods as well as 
to construct new discretization methods for \eqref{primal}
by exploiting the decomposable structure of the problem. 
In particular, it was shown in \cite{zulehner-2016-02} that the Hellan-Herrmann-Johnson mixed method 
(see \cite{hellan:67,herrmann:67,johnson:73}) for \eqref{primal} 
allows a similar decomposition as the continuous problem, 
which leads to a new and simpler assembling procedure for the discretization matrix 
and to more efficient solution techniques for the discretized problem. 
Moreover, a novel conforming variant of the Hellan-Herrmann-Johnson mixed method 
was found based on the decomposition.

The main application of this paper is to derive a similar decomposition result for biharmonic problems 
\eqref{primal} on bounded and topologically trivial three-dimensional domains $\om\subset\rt$ with a
(strong) Lipschitz boundary $\ga$. 
For this we proceed as in \cite{zulehner-2016-02} and reformulate \eqref{primal} using 
$$\Delta^2=\divDiv\Gradgrad$$
as a mixed problem by introducing the (negative) Hessian of the original scalar field $u$ 
as an auxiliary tensor field
\begin{equation} 
\label{defM}
\bfM=-\Gradgrad u\in\ltomS.
\end{equation}
Then the biharmonic differential equation reads
\begin{equation} 
\label{divDivM}
-\divDiv\bfM=f\quad\text{in }\om.
\end{equation}
For an appropriate non-standard Sobolev space for $\bfM$ 
it can be shown that the mixed problem in $\bfM$ and $u$ is well-posed,
see \eqref{var_mixed_final_one}-\eqref{var_mixed_final_two}. 
Then the decomposition of the biharmonic problem follows from a
regular decomposition of this non-standard Sobolev space, see Lemma \ref{Helmholtz1}. 
This part of the analysis carries over  
completely from the two-dimensional case to the three-dimensional case 
and is recalled in Section \ref{applsec}.
To efficiently utilize this
regular decomposition for the decomposition 
of the biharmonic problem an appropriate characterization of the kernel 
of the operator $\divDiv$ is required, which is well understood for the two-dimensional case, 
see, e.g., \cite{Beirao:2007a,huang:11,zulehner-2016-02}. 
Its extension to the three-dimensional case is one of the central topics of this paper.
We expect - as in the two-dimensional case - similar interesting implications 
for the study of appropriate discretization methods for four-order problems 
in the three-dimensional case.

Another application comes from the theory for general relativity and gravitational waves.
There, the so called linearized Einstein-Bianchi system reads as the Maxwell's equations
\begin{align*}
\p_{t}E+\Rot B=F,\quad
\Div E=f\qquad\text{in }\om,\\
\p_{t}B-\Rotc\,E=G,\quad
\Divc\,B=g\qquad\text{in }\om,
\end{align*}
but with symmetric and deviatoric (trace-free) tensor fields $E$ and $B$,
see \cite{Quenneville:2015} for more details, especially on the modeling.

The paper is organized as follows:
In Subsection \ref{mainressec} of this introduction
we will present some of the main results in a non-rigorous way
and the application to the three-dimensional biharmonic equation,
i.e., to \eqref{primal} for $\om\subset\rt$.
The mathematically rigorous part, where also all precise definitions can be found, begins with
preliminaries in Section \ref{prelimsec} and
introduces our general functional analytical setting.
Then we will discuss the relevant unbounded linear operators,
show closed and exact Hilbert complex properties, and
present a suitable representation of the kernel of $\divDiv$ 
for the three-dimensional case in Section \ref{kernelsecsimple} for topologically trivial domains. 
In Section \ref{kernelsecgeneral} we extend our results to (strong) Lipschitz domains
with arbitrary topology based on two new and crucial compact embeddings.
In the final Section \ref{applsec} we give a detailed study of
the application of our results to the three-dimensional biharmonic equation from Section \ref{mainressec}.
The proofs of some useful identities are presented in an appendix.

\subsection{Some Main Results}
\mylabel{mainressec}

Let $\om\subset\rt$ be a bounded and topologically trivial strong Lipschitz domain.
Based on a decomposition result of the non-standard Hilbert space
\begin{align}
\mylabel{firstdecohzmodd}
\dDzmoomS 
=\set{\bfM\in\ltomS}{\divDiv\bfM\in\hmoom},
\end{align}
see Lemma \ref{Helmholtz1}, where $\ltomS$ denotes the symmetric $\Ltom$-tensor fields, and a
representation of the kernel of $\divDivS$ as the range of symmetric rotations 
of deviatoric tensor fields, i.e., 
$$N(\divDivS)=R(\symRotT),$$
a decomposition of the three-dimensional 
biharmonic problem \eqref{primal} into three (consecutively to solve) second-order problems 
will be rigorously derived in Section \ref{applsec}.
For details, see \eqref{finalvareqone}-\eqref{finalvareqthree}
and the strong equations after the corresponding proof.
More precisely, the three resulting second order equations 
are a Dirichlet-Poisson problem for the auxiliary scalar function $p$
$$\Delta p=f\quad\text{in }\om,\qquad 
p=0\quad\text{on }\ga,$$
a second-order Neumann type $\Rot\symRot$-$\Div$-system for the auxiliary tensor field $\bfE$
\begin{align*}
\tr\bfE
&=0,
&
\Rot\symRot\bfE
&
=\spn\grad p,
&
\Div\bfE
&=0
&
&\text{in }\om,\\
&&
n\times\symRot\bfE
&
=p\spn n=0,
&
\bfE\,n
&=0
&
&\text{on }\ga,
\end{align*}
and, finally, a Dirichlet-Poisson problem for the original scalar function $u$
$$\Delta u=3p+\tr\symRot\bfE=\tr\,(p\,\bfI+\symRot\bfE)\quad\text{in }\om,\qquad 
u=0\quad\text{on }\ga.$$
The second system is equivalent to
\begin{align*}
\tr\bfE
&=0,
&
\Rot(\symRot\bfE+p\,\bfI)
&
=0,
&
\Div\bfE
&=0
&
&\text{in }\om,\\
&&
n\times(\symRot\bfE+p\,\bfI)
&
=0,
&
\bfE\,n
&=0
&
&\text{on }\ga.
\end{align*}
In matrix notation the latter system reads as the symmetric system
$$\begin{bmatrix}
3 & \tr\symRotT & -\mathring{\Delta} \\
\RotcS(\,\cdot\,\bfI) & \RotcS\symRotT &  0 \\
-\mathring{\Delta} & 0 & 0 
\end{bmatrix}
\begin{bmatrix}
p \\[1.3ex]
\bfE \\[1.3ex]
u
\end{bmatrix}
=
\begin{bmatrix}
0 \\[1.3ex]
0 \\[1.3ex]
-f
\end{bmatrix}$$
with $(\tr\symRotT)^{*}=\RotcS(\,\cdot\,\bfI)$
and $\mathring{\Delta}=\div\gradc$.
While $-\mathring{\Delta}$ is continuously invertible, 
$\RotcS\symRotT$ is not on its domain of definition $D(\symRotT)$, but on the more regular space 
\begin{align}
\mylabel{DsymRotTreg}
D(\symRotT)\cap N(\symRotT)^{\bot_{\ltomT}}
=D(\symRotT)\cap R(\devGrad)^{\bot_{\ltomT}}
=D(\symRotT)\cap N(\DivcT),
\end{align}
which leads to another difficulty. This is a well known and typical situation, e.g., in the theory
of static Maxwell equations, and it will turn out that it results into a symmetric saddle point system,
see Theorem \ref{finalvarformapp} and \eqref{dsadpointmatrix}.

The above mentioned crucial regular type decomposition of the space \eqref{firstdecohzmodd} will be proved
in Lemma \ref{Helmholtz1} and shows the direct and topological (continuous) decomposition
$$\dDzmoomS
=\hocom\cdot\bfI
\dotplus
\dDzomS.$$
Hence, the kernel $N(\divDivS)=\dDzomS$ is an important object.
In Theorem \ref{maintheo} we will show
\begin{align*}
\dDzomS
&=N(\divDivS)
=R(\symRotT)
=\symRot\HoomT\\
&=\symRot\symRomT
=\symRot\big(\symRomT\cap\DczomT\big).
\end{align*}
Especially, the range $R(\symRotT)$ is closed.
The potential on the right hand side of the first line is called a regular potential
and the potential on the right hand side of the second line is uniquely determined.
Both potentials depend continuously on the data. Here,
$\HoomT$ is the Sobolev space of deviatoric $\Hoom$-tensor fields and
$$\symRomT=D(\symRotT),\qquad
\DczomT=N(\DivcT).$$
Moreover, a corresponding Poincar\'e type estimate 
$$\exists\,c_{\mathsf{R}}>0\quad
\forall\,\bfE\in\symRomT\cap\DczomT\qquad
\normLtom{\bfE}\leq c_{\mathsf{R}}\,\normLtom{\symRot\bfE}$$
as well as a Helmholtz type decomposition
$$\ltomT
=\DczomT
\oplus_{\ltomT}
\symRzomT$$
hold. Similar results hold for the kernels of $\DivcT$ and $\symRotT$, which we have already used in \eqref{DsymRotTreg}.
More precisely, Theorem \ref{maintheo} also shows
\begin{align*}
\symRzomT
&=N(\symRotT)
=R(\devGrad)
=\devGrad\hoom
=\devGrad\big(\hoom\cap\RTz^{\bot_{\ltom}}\big),\\
\DczomT
&=N(\DivcT)
=R(\RotcS)
=\Rot\HocomS\\
&=\Rot\RcomS
=\Rot\big(\RcomS\cap\dDzomS\big)
\end{align*}
with the same properties of the respective potentials, and
\begin{align*}
\exists\,c_{\mathsf{D}}&>0
&
\forall\,\vecv&\in\hoom\cap\RTz^{\bot_{\ltom}}
&
\normltom{\vecv}&\leq c_{\mathsf{D}}\,\normLtom{\devGrad\vecv},\\
&
&
\forall\,\bfM&\in\RcomS\cap\dDzomS
&
\normLtom{\bfM}&\leq c_{\mathsf{R}}\,\normLtom{\Rot\bfM}=c_{\mathsf{R}}\,\normLtom{\dev\Rot\bfM},
\end{align*}
where $c_{\mathsf{R}}$ is the same as before 
and $\RTz$ denotes the space of lowest order Raviart-Thomas affine linear vector fields.

Our results rely on the study of the corresponding Hilbert complex
$$\begin{CD}
\{0\} @> 0 >>
\htcom @> \Gradgradc >>
\RcomS @> \RotcS >>
\DcomT @> \DivcT >>
\ltom @> \pi_{\RTz} >>
\RTz
\end{CD}$$
and its dual or adjoint Hilbert complex
$$\begin{CD}
\{0\} @< 0 <<
\ltom @< \divDivS <<
\dDomS @< \symRotT << 
\symRomT @< -\devGrad <<
\hoom @< \iota_{\RTz} <<
\RTz
\end{CD},$$
which will turn out to be closed (closed ranges)
an exact (trivial cohomology groups).
Here, the densely defined, closed, and unbounded linear operators
$\Gradgradc$, $\RotcS$, and $\DivcT$ are given as closures of
\begin{align*}
\widetilde\Gradgradc:\cicom\subset\ltom
&\To\ltomS,
&
u
&\mapsto\Gradgrad u,\\
\widetilde\RotcS:\Cicom\cap\ltomS\subset\ltomS
&\To\ltomT,
&
\bfM
&\mapsto\Rot\bfM,\\
\widetilde\DivcT:\Cicom\cap\ltomT\subset\ltomT
&\To\ltom,
&
\bfE
&\mapsto\Div\bfE
\end{align*}
with domains of definition
$$D(\Gradgradc)=\htcom,\qquad
D(\RotcS)=\RcomS,\qquad
D(\DivcT)=\DcomT$$
and kernels
\begin{align*}
N(\Gradgradc)
=\ggczom
&=\{0\},
&
N(\RotcS)
&=\RczomS,
&
N(\DivcT)
&=\DczomT.
\end{align*}
The adjoints are
\begin{align*}
\Gradgradc^{*}=\divDivS:\dDomS\subset\ltomS
&\To\ltom,
&
\bfM
&\mapsto\divDiv\bfM,\\
\RotcS^{*}=\symRotT:\symRomT\subset\ltomT
&\To\ltomS,
&
\bfE
&\mapsto\symRot\bfE,\\
\DivcT^{*}=-\devGrad:\devGom=\hoom\subset\ltom
&\To\ltomT,
&
\vecv
&\mapsto-\devGrad\vecv
\end{align*}
with kernels
\begin{align*}
N(\divDivS)
&=\dDzomS,
&
N(\symRotT)
&=\symRzomT,
&
N(\devGrad)
&=\RTz.
\end{align*}
In this contribution we will prove all important tools to handle
pde-systems involving the latter operators, such as
Helmholtz type decompositions, potentials, regular decompositions, regular potentials,
Poincar\'e type estimates, closed ranges, exactness, and, 
most importantly, the key property, that certain canonical embeddings are compact, e.g.,
$$D(\RotcS)\cap D(\divDivS)
=\RcomS\cap\dDomS\overset{\text{\sf cpt}}{\hookrightarrow}\ltomS$$
or
$$D(\symRotT)\cap D(\DivcT)
=\symRomT\cap\DcomT\overset{\text{\sf cpt}}{\hookrightarrow}\ltomT.$$

In principle, such results are known in simpler situations, e.g.,
in electro-magnetic theory or linear elasticity.
In electro-magnetic theory (Maxwell's equations) one has to deal with 
the de Rham complex ($\grad$-$\rot$-$\div$-complex), i.e., 
with the closed and exact Hilbert complex and its adjoint
$$\begin{CD}
\{0\} @> 0 >>
\hocom @> \gradc >>
\rcom @> \rotc >>
\dcom @> \divc >>
\ltom @> \pi_{\reals} >>
\reals
\end{CD},$$
$$\begin{CD}
\{0\} @< 0 <<
\ltom @< -\div <<
\dom @< \rot << 
\rom @< -\grad <<
\hoom @< \iota_{\reals} <<
\reals
\end{CD},$$
which have a well known generalization to differential forms 
and exterior derivatives $\ed$, $\mathring\ed$ and co-derivatives $\cd$, $\mathring\cd$ as well.
In linear elasticity we observe the elasticity complexes ($\Rot\!\Rot^{\top}$-complexes), i.e.,
$$\begin{CD}
\{0\} @> 0 >>
\hocom @> \symGradc >>
\H_{\bbS}(\mathring{\Rot\!\Rot}{}^{\top},\om) @> \mathring{\Rot\!\Rot}_{\bbS}\hspace*{-1mm}{}^{\top} >>
\H_{\bbS}(\Divc,\om) @> \DivcS >>
\ltom @> \pi_{\RM} >>
\RM
\end{CD},$$
$$\begin{CD}
\{0\} @< 0 <<
\ltom @< -\Div_{\bbS} <<
\H_{\bbS}(\Div,\om) @< \Rot\!\Rot_{\bbS}\hspace*{-1mm}{}^{\top} << 
\H_{\bbS}(\Rot\!\Rot\hspace*{-1mm}{}^{\top},\om) @< -\symGrad <<
\hoom @< \iota_{\RM} <<
\RM
\end{CD}.$$
Note that these complexes admit certain symmetries, which is not the case
for the $\Gradgrad$-$\divDiv$-complexes. On the other hand, there is no reasonable doubt that
similar results hold for the other set of boundary conditions as well, i.e., for the Hilbert complexes
$$\begin{CD}
\mathsf{P}_{1} @> \iota_{\mathsf{P}_{1}} >>
\htom @> \Gradgrad >>
\RomS @> \Rot_{\bbS} >>
\DomT @> \Div_{\bbT} >>
\ltom @> 0 >>
\{0\}
\end{CD},$$
$$\begin{CD}
\mathsf{P}_{1} @< \pi_{\mathsf{P}_{1}} <<
\ltom @< \mathring{\divDivS} <<
\H_{\bbS}(\mathring{\divDiv},\om) @< \mathring{\symRotT} << 
\H_{\bbT}(\mathring{\symRot},\om) @< -\mathring{\devGrad} <<
\hocom @< 0 <<
\{0\}
\end{CD}.$$

\section{Preliminaries}
\mylabel{prelimsec}

We start by recalling some basic concepts and abstract results from functional analysis
concerning Helmholtz decompositions, closed ranges, Friedrichs/Poincar\'e type estimates,
and bounded or even compact inverse operators.
Since we will need both the Banach space setting for bounded linear operators 
as well as the Hilbert space setting for (possibly unbounded) 
closed and densely defined linear operators, we will shortly recall these two variants.

\subsection{Functional Analysis Toolbox}
\label{fuanasec}

Let $\X$ and $\Y$ be real Banach spaces.
With $BL(\X,\Y)$ we introduce the space of bounded linear operators mapping $\X$ to $\Y$.
The dual spaces of $\X$ and $\Y$ are denoted by $\X^{\prime}:=BL(\X,\reals)$ and $\Y^{\prime}:=BL(\Y,\reals)$.
For a given $\A\in BL(\X,\Y)$ we write $\Aa\in BL(\Y^{\prime},\X^{\prime})$ 
for its Banach space dual or adjoint operator
defined by $\Aa y^{\prime}(x):=y^{\prime}(\A x)$ 
for all $y^{\prime}\in\Y^{\prime}$ and all $x\in\X$.
Norms and duality in $\X$ resp. $\X^{\prime}$
are denoted by $\norm{\,\cdot\,}_{\X}$, $\norm{\,\cdot\,}_{\X^{\prime}}$, 
and $\scp{\,\cdot\,}{\,\cdot\,}_{\X^{\prime}}$.

Suppose $\H_{1}$ and $\H_{2}$ are Hilbert spaces.
For a (possibly unbounded) densely defined linear operator $\A\!:\!D(\A)\subset\H_{1}\to\H_{2}$
we recall that its Hilbert space dual or adjoint
$\As\!:\!D(\As)\subset\H_{2}\to\H_{1}$ can be defined
via its Banach space adjoint $\Aa$ and the Riesz isomorphisms of $\H_{1}$ and $\H_{2}$
or directly as follows:
$y\in D(\As)$
if and only if $y\in\H_{2}$ and 
$$\exists\,f\in\H_{1}\quad
\forall\,x\in D(\A)\quad
\scp{\A x}{y}_{\H_{2}}=\scp{x}{f}_{\H_{1}}.$$
In this case we define $\As y:=f$.
We note that $\As$ has maximal domain of definition and that $\As$ is characterized by
$$\forall\,x\in D(\A)\quad
\forall\,y\in D(\As)\qquad
\scp{\A x}{y}_{\H_{2}}=\scp{x}{\As y}_{\H_{1}}.$$
Here $\scp{\,\cdot\,}{\,\cdot\,}_{\H}$ denotes the scalar product in a Hilbert space $\H$ 
and $D$ is used for the domain of definition of a linear operator.
Additionally, we introduce the notation $N$ for the kernel or null space 
and $R$ for the range of a linear operator.

Let $\A\!:\!D(\A)\subset\H_{1}\to\H_{2}$ be a (possibly unbounded) 
closed and densely defined linear operator on two Hilbert spaces $\H_{1}$ and $\H_{2}$
with adjoint $\As\!:\!D(\As)\subset\H_{2}\to\H_{1}$.
Note $(\As)^{*}=\ovl{\A}=\A$, i.e., $(\A,\As)$ is a dual pair.
By the projection theorem the Helmholtz type decompositions
\begin{align}
\mylabel{helm}
\H_{1}=N(\A)\oplus_{\H_{1}}\ovl{R(\As)},\quad
\H_{2}=N(\As)\oplus_{\H_{2}}\ovl{R(\A)}
\end{align}
hold and we can define the reduced operators
\begin{align*}
\cA&:=\A|_{\ovl{R(\As)}}:D(\cA)\subset\ovl{R(\As)}\to\ovl{R(\A)},&
D(\cA)&:=D(\A)\cap N(\A)^{\bot_{\H_{1}}}=D(\A)\cap\ovl{R(\As)},\\
\cAs&:=\As|_{\ovl{R(\A)}}:D(\cAs)\subset\ovl{R(\A)}\to\ovl{R(\As)},&
D(\cAs)&:=D(\As)\cap N(\As)^{\bot_{\H_{2}}}=D(\As)\cap\ovl{R(\A)},
\end{align*}
which are also closed and densely defined linear operators.
We note that $\cA$ and $\cAs$ are indeed adjoint to each other, i.e.,
$(\cA,\cAs)$ is a dual pair as well. Now the inverse operators 
$$\cA^{-1}:R(\A)\to D(\cA),\qquad
(\cAs)^{-1}:R(\As)\to D(\cAs)$$
exist and they are bijective, 
since $\cA$ and $\cAs$ are injective by definition.
Furthermore, by \eqref{helm} we have
the refined Helmholtz type decompositions
\begin{align}
\label{DacA}
D(\A)&=N(\A)\oplus_{\H_{1}}D(\cA),&
D(\As)&=N(\As)\oplus_{\H_{2}}D(\cAs)
\intertext{and thus we obtain for the ranges}
\label{RacA}
R(\A)&=R(\cA),&
R(\As)&=R(\cAs).
\end{align}

By the closed range theorem and the closed graph theorem we get immediately the following.

\begin{lem}
\label{poincarerange}
The following assertions are equivalent:
\begin{itemize}
\item[\bf(i)] 
$\exists\,c_{\A}\in(0,\infty)$ \quad 
$\forall\,x\in D(\cA)$ \qquad 
$\norm{x}_{\H_{1}}\leq c_{\A}\norm{\A x}_{\H_{2}}$
\item[\bf(i${}^{*}$)] 
$\exists\,c_{\As}\in(0,\infty)$ \quad 
$\forall\,y\in D(\cAs)$ \qquad 
$\norm{y}_{\H_{2}}\leq c_{\As}\norm{\As y}_{\H_{1}}$
\item[\bf(ii)] 
$R(\A)=R(\cA)$ is closed in $\H_{2}$.
\item[\bf(ii${}^{*}$)] 
$R(\As)=R(\cAs)$ is closed in $\H_{1}$.
\item[\bf(iii)] 
$\cA^{-1}:R(\A)\to D(\cA)$ is continuous and bijective
with norm bounded by $(1+c_{\A}^2)^{\oh}$.
\item[\bf(iii${}^{*}$)] 
$(\cAs)^{-1}:R(\As)\to D(\cAs)$ is continuous and bijective
with norm bounded by $(1+c_{\As}^2)^{\oh}$.
\end{itemize}
\end{lem}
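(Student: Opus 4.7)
The plan is to establish the chain (i) $\Leftrightarrow$ (iii) $\Leftrightarrow$ (ii) for the unstarred statements, to link it to the starred chain via the closed range theorem in the form (ii) $\Leftrightarrow$ (ii${}^{*}$), and then to apply the same argument to the dual pair $(\cAs,\cA)$ to cover (i${}^{*}$) and (iii${}^{*}$). The key structural observation that makes this purely abstract is that $\cA$ is itself a closed, densely defined, and (by construction) \emph{injective} operator between the Hilbert spaces $\ovl{R(\As)}$ and $\ovl{R(\A)}$, with adjoint exactly $\cAs$, so everything one wants to say about the pair $(\A,\As)$ reduces to a statement about the injective restrictions $\cA$ and $\cAs$ whose ranges coincide with those of $\A$ and $\As$ by \eqref{RacA}.

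First I would prove (iii) $\Rightarrow$ (i). Since $\cA:D(\cA)\to R(\A)$ is a continuous bijection, the identity $x=\cA^{-1}\cA x$ for $x\in D(\cA)$ together with the assumed bound on $\cA^{-1}$ gives (i) at once. Next, (i) $\Rightarrow$ (ii) is the standard Cauchy-sequence argument: if $y_{n}=\cA x_{n}\to y$ in $\H_{2}$, then (i) forces $(x_{n})$ to be Cauchy in $\H_{1}$, hence $x_{n}\to x\in\ovl{R(\As)}$, and closedness of $\cA$ implies $x\in D(\cA)$ with $\cA x=y$, so $R(\A)=R(\cA)$ is closed. Finally, (ii) $\Rightarrow$ (iii): if $R(\cA)$ is closed then it is a Hilbert space, $\cA$ is a closed bijection between two Hilbert spaces, and the closed graph theorem applied to $\cA^{-1}$ yields continuity. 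The explicit bound $(1+c_{\A}^{2})^{\oh}$ in the graph norm drops out of combining (i) with $\norm{\cA^{-1}y}_{D(\A)}^{2}=\norm{\cA^{-1}y}_{\H_{1}}^{2}+\norm{y}_{\H_{2}}^{2}$.

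To cross over to the starred chain I would invoke the closed range theorem, which states precisely (ii) $\Leftrightarrow$ (ii${}^{*}$) for any closed densely defined operator between Hilbert spaces. Because $(\cAs,(\cAs)^{*})=(\cAs,\cA)$ is again such a dual pair with $\cAs$ enjoying the same injectivity and closedness properties as $\cA$, rerunning the three implications of the previous paragraph with $\cA$ replaced by $\cAs$ produces (i${}^{*}$) $\Leftrightarrow$ (ii${}^{*}$) $\Leftrightarrow$ (iii${}^{*}$), closing all six equivalences.

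The hard part will not be conceptual but purely a matter of bookkeeping: one must ensure that the Poincar\'e-type estimates in (i) and (i${}^{*}$) are read on the reduced domains $D(\cA)$ and $D(\cAs)$ rather than on $D(\A)$ and $D(\As)$, so that the identifications \eqref{DacA} and \eqref{RacA} make $R(\cA)=R(\A)$ and $R(\cAs)=R(\As)$ tautological and the closedness of $\cA$ is inherited from that of $\A$ via the closedness of $\ovl{R(\As)}\subset\H_{1}$. Once these identifications are in place, the entire argument reduces to a clean application of the projection theorem, the closed range theorem, and the closed graph theorem, with no analytic input specific to the concrete operators $\Gradgradc$, $\RotcS$, $\DivcT$ that motivate the setting.
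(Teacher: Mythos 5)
Your proposal is correct and follows essentially the same route the paper indicates: the paper offers no detailed proof but simply prefaces the lemma with ``By the closed range theorem and the closed graph theorem we get immediately the following,'' and your argument fills in precisely the bookkeeping (the closed-graph step for (ii)$\Rightarrow$(iii), the Cauchy-sequence step for (i)$\Rightarrow$(ii), the closed-range theorem for (ii)$\Leftrightarrow$(ii${}^*$), and the symmetric rerun for the starred chain) that those two theorems make immediate.
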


In case that one of the assertions of Lemma \ref{poincarerange} is true,
e.g., $R(\A)$ is closed, we have
\begin{align*}
\H_{1}
&=N(\A)\oplus_{\H_{1}}R(\As),
&
\H_{2}
&=N(\As)\oplus_{\H_{2}}R(\A),\\
D(\A)
&=N(\A)\oplus_{\H_{1}}D(\cA),
&
D(\As)
&=N(\As)\oplus_{\H_{2}}D(\cAs),\\
D(\cA)
&=D(\A)\cap R(\As),
&
D(\cAs)
&=D(\As)\cap R(\A).
\end{align*}

For the ``best'' constants $c_{\A}$, $c_{\As}$ we have the following lemma.

\begin{lem}
\label{lemconstants}
The Rayleigh quotients 
$$\frac{1}{c_{\A}}
:=\inf_{0\neq x\in D(\cA)}\frac{\norm{\A x}_{\H_{2}}}{\norm{x}_{\H_{1}}}
=\inf_{0\neq y\in D(\cAs)}\frac{\norm{\As y}_{\H_{1}}}{\norm{y}_{\H_{2}}}
=:\frac{1}{c_{\As}}$$
coincide, i.e., $c_{\A}=c_{\As}$, if either $c_{\A}$ or $c_{\As}$ exists in $(0,\infty)$.
Otherwise they also coincide, i.e., it holds $c_{\A}=c_{\As}=\infty$.
\end{lem}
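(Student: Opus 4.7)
The plan is to reduce the identity to a norm equality for the bounded inverses of the reduced operators $\cA$ and $\cAs$, and then invoke duality between them.

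First, I would invoke Lemma \ref{poincarerange} to observe that (i) and (i${}^{*}$) are equivalent, so a finite Poincar\'e constant exists for $\cA$ if and only if one exists for $\cAs$. If neither exists, the infima in the two Rayleigh quotients both equal zero, so $c_{\A}=c_{\As}=\infty$ and the claim is trivial.

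In the finite case, Lemma \ref{poincarerange}(iii), (iii${}^{*}$) guarantees that $\cA\colon D(\cA)\to R(\A)$ and $\cAs\colon D(\cAs)\to R(\As)$ are bijective with bounded inverses. The substitutions $y=\A x$ and $x=\As y$ convert the Rayleigh quotients into operator-norm expressions
$$\frac{1}{c_{\A}}=\inf_{0\neq y\in R(\A)}\frac{\norm{y}_{\H_{2}}}{\norm{\cA^{-1}y}_{\H_{1}}}=\frac{1}{\norm{\cA^{-1}}},\qquad \frac{1}{c_{\As}}=\frac{1}{\norm{(\cAs)^{-1}}},$$
so the claim reduces to $\norm{\cA^{-1}}=\norm{(\cAs)^{-1}}$.

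To finish, I would use that $(\cA,\cAs)$ is a dual pair of closed densely defined operators, together with the standard fact that the Hilbert space adjoint of the bounded everywhere-defined bijection $\cA^{-1}\colon R(\A)\to\overline{R(\As)}$ coincides with $(\cAs)^{-1}\colon R(\As)\to\overline{R(\A)}$. Preservation of the operator norm under Hilbert adjoints then gives $\norm{\cA^{-1}}=\norm{(\cA^{-1})^{*}}=\norm{(\cAs)^{-1}}$ and thus $c_{\A}=c_{\As}$.

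I expect the main obstacle to be justifying $(\cA^{-1})^{*}=(\cAs)^{-1}$ cleanly. Although this is a standard identity for bijective closed densely defined operators between Hilbert spaces with bounded inverses, it requires careful bookkeeping of domains and ranges: one verifies both inclusions using the defining adjoint identity $\scp{\cA x}{y}_{\H_{2}}=\scp{x}{\cAs y}_{\H_{1}}$ for $x\in D(\cA)$, $y\in D(\cAs)$, together with the surjectivity of $\cA$ and $\cAs$ onto their (now closed) ranges.
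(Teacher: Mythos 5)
The paper states Lemma \ref{lemconstants} without proof, treating it as standard functional analysis, so there is no paper proof to compare against; but your argument is correct and is the natural one. The reduction $c_{\A}=\bnorm{\cA^{-1}}_{R(\A)\to\H_{1}}$ and $c_{\As}=\bnorm{(\cAs)^{-1}}_{R(\As)\to\H_{2}}$ is sound once one knows $R(\A)$, $R(\As)$ are closed (Lemma \ref{poincarerange}), and the key identity $(\cA^{-1})^{*}=(\cAs)^{-1}$ between bounded everywhere-defined operators $R(\A)\to R(\As)$ and $R(\As)\to R(\A)$ is a one-line computation: for $y=\cA x\in R(\A)$ and $z=\cAs w\in R(\As)$ with $x\in D(\cA)$, $w\in D(\cAs)$, one has $\scp{\cA^{-1}y}{z}_{\H_{1}}=\scp{x}{\As w}_{\H_{1}}=\scp{\A x}{w}_{\H_{2}}=\scp{y}{(\cAs)^{-1}z}_{\H_{2}}$, and norm-preservation of Hilbert adjoints finishes the argument. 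The degenerate case (neither constant finite) is also handled correctly via the equivalence (i)$\Leftrightarrow$(i${}^{*}$) of Lemma \ref{poincarerange}.
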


From now on and throughout this paper,
we always pick the best possible constants in the various 
Friedrichs/Poincar\'e type estimates.


A standard indirect argument shows the following.

\begin{lem}
\label{lemtoolbox}
Let $D(\cA)=D(\A)\cap\ovl{R(\As)}\hookrightarrow\H_{1}$ be compact. 
Then the assertions of Lemma \ref{poincarerange} hold.
Moreover, the inverse operators 
$$\cA^{-1}:R(\A)\to R(\As),\quad
(\cAs)^{-1}:R(\As)\to R(\A)$$
are compact with norms 
$\bnorm{\cA^{-1}}_{R(\A),R(\As)}
=\bnorm{(\cAs)^{-1}}_{R(\As),R(\A)}
=c_{\A}$.
\end{lem}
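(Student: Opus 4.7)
The plan is to proceed by the classical Rellich-type indirect argument, deduce the Poincar\'e estimate, and then exploit that estimate together with the compactness hypothesis to establish compactness of the two inverse operators.

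First, I would prove assertion (i) of Lemma \ref{poincarerange} by contradiction. Assume no constant $c_{\A}$ exists. Then there is a sequence $(x_{n})\subset D(\cA)$ with $\norm{x_{n}}_{\H_{1}}=1$ and $\norm{\A x_{n}}_{\H_{2}}\to 0$. In particular $(x_{n})$ is bounded in the graph norm of $D(\A)$, hence bounded in $D(\cA)$ equipped with that norm. The compact embedding $D(\cA)\hookrightarrow\H_{1}$ yields a subsequence (not relabeled) converging to some $x\in\H_{1}$. Since $\A$ is closed and $\A x_{n}\to 0$, we obtain $x\in D(\A)$ with $\A x=0$, i.e., $x\in N(\A)$. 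On the other hand, $D(\cA)\subset\ovl{R(\As)}=N(\A)^{\bot_{\H_{1}}}$, and this orthogonal complement is closed, so $x\in N(\A)^{\bot_{\H_{1}}}$. Therefore $x=0$, contradicting $\norm{x}_{\H_{1}}=\lim\norm{x_{n}}_{\H_{1}}=1$.

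Having established (i) of Lemma \ref{poincarerange}, all other assertions there follow automatically; in particular $R(\A)$ and $R(\As)$ are closed, and from Lemma \ref{lemconstants} we have $c_{\A}=c_{\As}\in(0,\infty)$. I would next verify compactness of $\cA^{-1}\colon R(\A)\to R(\As)$. Given a bounded sequence $(y_{n})\subset R(\A)$, let $x_{n}:=\cA^{-1}y_{n}\in D(\cA)$, so that $\A x_{n}=y_{n}$. The Poincar\'e estimate gives $\norm{x_{n}}_{\H_{1}}\leq c_{\A}\norm{y_{n}}_{\H_{2}}$, so $(x_{n})$ is bounded in the graph norm of $D(\cA)$. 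The compact embedding supplies a subsequence converging in $\H_{1}$ to some $x$, and since $R(\As)$ is closed in $\H_{1}$ (by the previous step) and $x_{n}\in D(\cA)\subset R(\As)$, the limit $x$ lies in $R(\As)$. The argument for $(\cAs)^{-1}\colon R(\As)\to R(\A)$ is symmetric, using that $D(\cAs)\hookrightarrow\H_{2}$ is compact, which follows from the corresponding Poincar\'e-type estimate (i${}^{*}$) together with compactness of $D(\cA)\hookrightarrow\H_{1}$; more concretely, any bounded sequence in $D(\cAs)$ with graph norm is mapped under $\As$ to a bounded sequence in $R(\As)$, then via $(\cAs)^{-1}\circ\As=\id$ one recovers compactness.

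Finally, I would read off the operator norms from Lemma \ref{lemconstants}. By the Rayleigh-quotient characterization,
\[
\bnorm{\cA^{-1}}_{R(\A),R(\As)}
=\sup_{0\neq y\in R(\A)}\frac{\norm{\cA^{-1}y}_{\H_{1}}}{\norm{y}_{\H_{2}}}
=\sup_{0\neq x\in D(\cA)}\frac{\norm{x}_{\H_{1}}}{\norm{\A x}_{\H_{2}}}
=c_{\A},
\]
and analogously $\bnorm{(\cAs)^{-1}}_{R(\As),R(\A)}=c_{\As}$; Lemma \ref{lemconstants} identifies the two values.

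The main obstacle is really only the indirect argument in the first step, where one must keep careful track of the two facts that $(x_{n})$ is controlled in $\H_{1}$ (yielding the compactness extraction) and simultaneously lies in the closed subspace $\ovl{R(\As)}=N(\A)^{\bot_{\H_{1}}}$ (yielding that the limit, which is in $N(\A)$, must vanish). Everything else is a clean consequence of Lemma \ref{poincarerange} and Lemma \ref{lemconstants}.
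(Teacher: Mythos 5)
Your overall strategy matches the standard indirect argument the paper alludes to: a contradiction argument using the compact embedding and the closedness of $\A$ to obtain the Poincar\'e estimate, then a direct extraction argument for compactness of $\cA^{-1}$. Those two steps are correct, and the norm identification at the end is fine.

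The one genuine gap is your justification for compactness of $(\cAs)^{-1}$. You correctly reduce it to compactness of $D(\cAs)\hookrightarrow\H_{2}$, but the explanation you offer --- map a bounded sequence $(y_{n})\subset D(\cAs)$ to the bounded sequence $(\As y_{n})\subset R(\As)$ and recover compactness through the relation $y_{n}=(\cAs)^{-1}\As y_{n}$ --- is circular: to exploit that relation you would already need $(\cAs)^{-1}$ to be compact, which is precisely what is to be shown. There are two clean repairs. First, verify that $(\cAs)^{-1}\colon R(\As)\to R(\A)$ is the Hilbert-space adjoint of $\cA^{-1}\colon R(\A)\to R(\As)$: for $y\in R(\A)$ and $z=\As w\in R(\As)$ with $w\in D(\cAs)$,
$$\scp{\cA^{-1}y}{z}_{\H_{1}}
=\scp{\A\cA^{-1}y}{w}_{\H_{2}}
=\scp{y}{(\cAs)^{-1}z}_{\H_{2}},$$
and then apply Schauder's theorem, since the adjoint of a compact operator is compact. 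Second, prove $D(\cAs)\hookrightarrow\H_{2}$ compact directly (the content of Lemma~\ref{cptequi}): for $(y_{n})$ bounded in the graph norm of $D(\cAs)$ one has $y_{n}\in R(\A)$ and
$$\norm{y_{n}-y_{m}}_{\H_{2}}^{2}
=\scp{\cA^{-1}(y_{n}-y_{m})}{\As(y_{n}-y_{m})}_{\H_{1}}
\leq\norm{\cA^{-1}(y_{n}-y_{m})}_{\H_{1}}\,\norm{\As(y_{n}-y_{m})}_{\H_{1}};$$
since you have already shown $\cA^{-1}$ to be compact and $(\As y_{n})$ is bounded in $\H_{1}$, the right-hand side tends to zero along a subsequence, so $(y_{n})$ is Cauchy in $\H_{2}$. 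Either route closes the gap.
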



Moreover, we have

\begin{lem}
\mylabel{cptequi}
$D(\cA)\hookrightarrow\H_{1}$ is compact,
if and only if $D(\cAs)\hookrightarrow\H_{2}$ is compact.
\end{lem}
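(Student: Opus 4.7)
The plan is to reduce the lemma to Lemma \ref{lemtoolbox} and then to exploit the symmetry $(\As)^{*}=\A$, so that only one implication actually needs a proof.

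First I would assume that $D(\cA)\hookrightarrow\H_{1}$ is compact and apply Lemma \ref{lemtoolbox} directly to $\A$. This delivers three things at once: closedness of both ranges $R(\A)$ and $R(\As)$, the Friedrichs/Poincar\'e estimates of Lemma \ref{poincarerange}, and---the crucial ingredient---compactness of the inverse $(\cAs)^{-1}:R(\As)\to R(\A)$.

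The main step is then to push a bounded sequence $(y_{n})\subset D(\cAs)$ (bounded in the graph norm, so $(y_{n})$ is bounded in $\H_{2}$ and $(\cAs y_{n})$ is bounded in $R(\As)\subset\H_{1}$) through this inverse. The small but essential observation is the identity $D(\cAs)=D(\As)\cap R(\A)$, available precisely because $R(\A)$ is closed; it guarantees $y_{n}\in R(\A)$ and hence $(\cAs)^{-1}\cAs y_{n}=y_{n}$. Compactness of $(\cAs)^{-1}$ then forces a subsequence of $(y_{n})=((\cAs)^{-1}\cAs y_{n})$ to converge in $R(\A)\subset\H_{2}$, so $D(\cAs)\hookrightarrow\H_{2}$ is compact.

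For the converse I would not repeat the argument. Since $(\As)^{*}=\A$ and the reduced operators associated with $\As$ are precisely $\cAs$ and $\cA$, applying exactly the same reasoning with the pair $(\A,\As)$ replaced by $(\As,\A)$ produces compactness of $D(\cA)\hookrightarrow\H_{1}$ from compactness of $D(\cAs)\hookrightarrow\H_{2}$. I do not anticipate any genuine obstacle: the real content sits in Lemma \ref{lemtoolbox}, and the only subtle point to verify carefully is the inclusion $D(\cAs)\subset R(\A)$ that legitimizes inverting $\cAs$ on the test sequence.
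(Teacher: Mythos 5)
Your proof is correct, and it is precisely the natural route given the surrounding material: the paper states Lemma \ref{cptequi} without proof immediately after Lemma \ref{lemtoolbox}, and your combination of that lemma (compactness of $(\cAs)^{-1}:R(\As)\to R(\A)$ together with closedness of $R(\A)$, which gives $D(\cAs)=D(\As)\cap R(\A)$) with the symmetry $(\As)^{*}=\A$ is evidently how the authors intended the gap to be filled. The one subtle point you correctly flagged --- that $y_{n}\in R(\A)$ so that $(\cAs)^{-1}\cAs y_{n}=y_{n}$ makes sense --- is indeed the essential hinge of the argument.
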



Now, let $\Az\!:\!D(\Az)\subset\H_{0}\to\H_{1}$ and $\Ao\!:\!D(\Ao)\subset\H_{1}\to\H_{2}$ 
be (possibly unbounded) closed and densely defined linear operators 
on three Hilbert spaces $\H_{0}$, $\H_{1}$ and $\H_{2}$
with adjoints $\Azs\!:\!D(\Azs)\subset\H_{1}\to\H_{0}$
and $\Aos\!:\!D(\Aos)\subset\H_{2}\to\H_{1}$
as well as reduced operators $\cAz$, $\cAzs$, and $\cAo$, $\cAos$.
Furthermore, we assume the sequence or complex property of $\Az$ and $\Ao$, 
that is, $\Ao\Az=0$, i.e.,
\begin{align}
\mylabel{sequenceprop}
R(\Az)\subset N(\Ao).
\end{align}
Then also $\Azs\Aos=0$, i.e., $R(\Aos)\subset N(\Azs)$.
The Helmholtz type decompositions of \eqref{helm} for $\A=\Ao$ and $\A=\Az$ read
\begin{align}
\label{helmappcl}
\H_{1}&=N(\Ao)\oplus_{\H_{1}}\ovl{R(\Aos)},
&
\H_{1}&=N(\Azs)\oplus_{\H_{1}}\ovl{R(\Az)}
\intertext{and by \eqref{sequenceprop} we see}
\label{helmkernappcl}
N(\Azs)&=N_{0,1}\oplus_{\H_{1}}\ovl{R(\Aos)},
&
N(\Ao)&=N_{0,1}\oplus_{\H_{1}}\ovl{R(\Az)},
&
N_{0,1}&:=N(\Ao)\cap N(\Azs)
\end{align}
yielding the refined Helmholtz type decomposition
\begin{align}
\label{helmappfullcl}
\H_{1}=\ovl{R(\Az)}\oplus_{\H_{1}}N_{0,1}\oplus_{\H_{1}}\ovl{R(\Aos)},\qquad
R(\Az)=R(\cAz),\qquad
R(\Aos)=R(\cAos).
\end{align}
The previous results of this section imply immediately the following.

\begin{lem}
\label{helmrefined}
Let $\Az$, $\Ao$ be as introduced before with $\Ao\Az=0$, i.e., \eqref{sequenceprop}.
Moreover, let $R(\Az)$ and $R(\Ao)$ be closed.
Then, the assertions of Lemma \ref{poincarerange} and Lemma \ref{lemconstants}
hold for $\Az$ and $\Ao$. Moreover, the refined Helmholtz type decompositions
\begin{align*}
\H_{1}
&=R(\Az)\oplus_{\H_{1}}N_{0,1}\oplus_{\H_{1}}R(\Aos),
&
N_{0,1}
&=N(\Ao)\cap N(\Azs),\\
N(\Ao)
&=R(\Az)\oplus_{\H_{1}}N_{0,1},
&
N(\Azs)
&=N_{0,1}\oplus_{\H_{1}}R(\Aos),\\
D(\Ao)
&=R(\Az)\oplus_{\H_{1}}N_{0,1}\oplus_{\H_{1}}D(\cAo),
&
D(\Azs)
&=D(\cAzs)\oplus_{\H_{1}}N_{0,1}\oplus_{\H_{1}}R(\Aos),\\
D(\Ao)\cap D(\Azs)
&=D(\cAzs)\oplus_{\H_{1}}N_{0,1}\oplus_{\H_{1}}D(\cAo)
\end{align*}
hold. Especially, $R(\Az)$, $R(\Azs)$, $R(\Ao)$, and $R(\Aos)$ are closed,
the respective inverse operators, i.e.,
\begin{align*}
\cAz^{-1}&:R(\Az)\to D(\cAz),
&
\cAo^{-1}&:R(\Ao)\to D(\cAo),\\
(\cAzs)^{-1}&:R(\Azs)\to D(\cAzs),
&
(\cAos)^{-1}&:R(\Aos)\to D(\cAos),
\end{align*}
are continuous, and there exist positive constants $c_{\Az}$, $c_{\Ao}$,
such that the Friedrichs/Poincar\'e type estimates
\begin{align*}
\forall\,x&\in D(\cAz)
&
\norm{x}_{\H_{0}}&\leq c_{\Az}\norm{\Az x}_{\H_{1}},
&
\forall\,y&\in D(\cAo)
&
\norm{y}_{\H_{1}}&\leq c_{\Ao}\norm{\Ao y}_{\H_{2}},\\
\forall\,y&\in D(\cAzs)
&
\norm{y}_{\H_{1}}&\leq c_{\Az}\norm{\Azs y}_{\H_{0}},
&
\forall\,z&\in D(\cAos)
&
\norm{z}_{\H_{2}}&\leq c_{\Ao}\norm{\Aos z}_{\H_{1}}
\end{align*}
hold.
\end{lem}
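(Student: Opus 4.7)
The plan is to apply the abstract results already proved (Lemmas \ref{poincarerange}, \ref{lemconstants}, and the decompositions \eqref{helm}--\eqref{RacA}) separately to $\Az$ and $\Ao$, and then glue the two pictures together using the complex property \eqref{sequenceprop}. First, since $R(\Az)$ and $R(\Ao)$ are closed by assumption, Lemma \ref{poincarerange} applied to $\A=\Az$ and $\A=\Ao$ immediately yields that $R(\Azs)$ and $R(\Aos)$ are closed as well, that the reduced inverses $\cAz^{-1}$, $\cAo^{-1}$, $(\cAzs)^{-1}$, $(\cAos)^{-1}$ exist as continuous bijections onto the respective domains of the reduced operators, and that the Friedrichs/Poincar\'e type estimates hold with best constants $c_{\Az}$, $c_{\Ao}$ coinciding with those of their adjoints by Lemma \ref{lemconstants}.

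Next, I would feed the closed-range information back into \eqref{helm}: the closures in \eqref{helmappcl} disappear, so that
\begin{align*}
\H_{1}&=N(\Ao)\oplus_{\H_{1}}R(\Aos),
&
\H_{1}&=N(\Azs)\oplus_{\H_{1}}R(\Az).
\end{align*}
The complex property $\Ao\Az=0$ gives $R(\Az)\subset N(\Ao)$, and taking adjoints yields $\Azs\Aos=0$, hence $R(\Aos)\subset N(\Azs)$. Intersecting the first decomposition with $N(\Azs)$ and the second with $N(\Ao)$, and using the definition $N_{0,1}=N(\Ao)\cap N(\Azs)$, one obtains
\begin{align*}
N(\Ao)=R(\Az)\oplus_{\H_{1}}N_{0,1},\qquad
N(\Azs)=N_{0,1}\oplus_{\H_{1}}R(\Aos),
\end{align*}
which is precisely \eqref{helmkernappcl} (without closures). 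Substituting these two identities back into either of the two original Helmholtz decompositions yields the refined three-term splitting
$$\H_{1}=R(\Az)\oplus_{\H_{1}}N_{0,1}\oplus_{\H_{1}}R(\Aos).$$

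Finally, to obtain the domain decompositions, I would intersect the $\H_{1}$-decomposition with the appropriate domain. For $D(\Ao)$, note that $R(\Az)\subset N(\Ao)\subset D(\Ao)$ and $N_{0,1}\subset D(\Ao)$, whereas $D(\Ao)\cap R(\Aos)=D(\cAo)$ by \eqref{DacA} combined with \eqref{RacA}; this yields $D(\Ao)=R(\Az)\oplus_{\H_{1}}N_{0,1}\oplus_{\H_{1}}D(\cAo)$. The decomposition of $D(\Azs)$ is symmetric. For the intersection $D(\Ao)\cap D(\Azs)$, any element of this space is orthogonally decomposed into its components in $R(\Az)$, $N_{0,1}$, $R(\Aos)$; the first component lies in $R(\Az)\subset N(\Ao)\cap N(\Azs)^{\perp}\cap D(\Ao)$ and, being in the domain $D(\Azs)$ as a difference of elements of $D(\Azs)$, actually lies in $D(\cAzs)$, and symmetrically the third lies in $D(\cAo)$, giving the asserted formula. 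The main (but still routine) obstacle is just this last bookkeeping step—checking that the three orthogonal summands of an element of $D(\Ao)\cap D(\Azs)$ individually lie in the respective reduced domains; this uses that $R(\Az),\,N_{0,1}\subset N(\Ao)\subset D(\Ao)$ and $R(\Aos),\,N_{0,1}\subset N(\Azs)\subset D(\Azs)$, so that each summand inherits membership in both $D(\Ao)$ and $D(\Azs)$ automatically.
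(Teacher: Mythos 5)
Your proof is correct and follows the same route the paper sketches in the preparatory equations \eqref{helm}--\eqref{helmappfullcl} and Lemmas \ref{poincarerange}, \ref{lemconstants}: apply the closed-range characterization separately to $\Az$ and $\Ao$, drop the closure bars in \eqref{helmappcl}--\eqref{helmappfullcl}, and intersect with the appropriate kernels and domains. The bookkeeping in your final paragraph — showing each orthogonal summand of an element of $D(\Ao)\cap D(\Azs)$ inherits membership in both domains, and hence lands in the reduced domain — is exactly the step the paper leaves implicit, and you carry it out correctly.
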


\begin{rem}
\label{clrangecompembrem}
Note that $R(\Az)$ resp. $R(\Ao)$ is closed,
if e.g. $D(\cAz)\hookrightarrow\H_{0}$ resp. $D(\cAo)\hookrightarrow\H_{1}$ is compact.
In this case, the respective inverse operators, i.e.,
\begin{align*}
\cAz^{-1}&:R(\Az)\to R(\Azs),
&
\cAo^{-1}&:R(\Ao)\to R(\Aos),\\
(\cAzs)^{-1}&:R(\Azs)\to R(\Az),
&
(\cAos)^{-1}&:R(\Aos)\to R(\Ao),
\end{align*}
are compact.
\end{rem}

Observe
$D(\cAo)
=D(\Ao)\cap\ovl{R(\Aos)}
\subset D(\Ao)\cap N(\Azs)
\subset D(\Ao)\cap D(\Azs)$.
Utilizing the Helmholtz type decompositions of Lemma \ref{helmrefined} we immediately have: 

\begin{lem}
\label{compemblem}
The embeddings 
$D(\cAz)\hookrightarrow\H_{0}$, $D(\cAo)\hookrightarrow\H_{1}$,
and $N_{0,1}\hookrightarrow\H_{1}$ are compact, if and only if the embedding
$D(\Ao)\cap D(\Azs)\hookrightarrow\H_{1}$ is compact.
In this case $N_{0,1}$ has finite dimension.
\end{lem}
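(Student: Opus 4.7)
The plan is to prove both directions using the refined Helmholtz decomposition
$$D(\Ao)\cap D(\Azs)=D(\cAzs)\oplus_{\H_{1}}N_{0,1}\oplus_{\H_{1}}D(\cAo)$$
from Lemma \ref{helmrefined}, which is the central algebraic tool. The reverse implication is nearly immediate: since
$$D(\cAo)=D(\Ao)\cap\ovl{R(\Aos)}\subset D(\Ao)\cap N(\Azs)\subset D(\Ao)\cap D(\Azs),$$
and analogously $D(\cAzs)\subset D(\Ao)\cap D(\Azs)$ and $N_{0,1}\subset N(\Ao)\cap N(\Azs)\subset D(\Ao)\cap D(\Azs)$, the hypothesized compactness of $D(\Ao)\cap D(\Azs)\hookrightarrow\H_{1}$ restricts to each of these three subspaces. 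For $D(\cAz)\hookrightarrow\H_{0}$, I would invoke Lemma \ref{cptequi} to transfer compactness of $D(\cAzs)\hookrightarrow\H_{1}$ back to $D(\cAz)\hookrightarrow\H_{0}$. One has to note that on each subspace the relevant graph norm collapses to the ambient graph norm of $D(\Ao)\cap D(\Azs)$, so the continuous inclusions are automatic.

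For the forward direction, I would first use Remark \ref{clrangecompembrem}: compactness of $D(\cAz)\hookrightarrow\H_{0}$ and $D(\cAo)\hookrightarrow\H_{1}$ yields closedness of $R(\Az)$ and $R(\Ao)$, legitimizing the use of Lemma \ref{helmrefined} and hence the decomposition above. The key technical observation is that the graph norm of $D(\Ao)\cap D(\Azs)$ splits orthogonally along this decomposition: for $x=x_{1}+x_{2}+x_{3}$ with $x_{1}\in D(\cAzs)$, $x_{2}\in N_{0,1}$, $x_{3}\in D(\cAo)$, one has $\Ao x=\Ao x_{3}$ and $\Azs x=\Azs x_{1}$ (since $x_{1},x_{2}\in N(\Ao)$ and $x_{2},x_{3}\in N(\Azs)$), so
$$\norm{x}_{\H_{1}}^{2}+\norm{\Ao x}_{\H_{2}}^{2}+\norm{\Azs x}_{\H_{0}}^{2}=\norm{x_{1}}_{\H_{1}}^{2}+\norm{\Azs x_{1}}_{\H_{0}}^{2}+\norm{x_{2}}_{\H_{1}}^{2}+\norm{x_{3}}_{\H_{1}}^{2}+\norm{\Ao x_{3}}_{\H_{2}}^{2}.$$
Thus any bounded sequence in $D(\Ao)\cap D(\Azs)$ yields bounded sequences in $D(\cAzs)$, $N_{0,1}$, $D(\cAo)$ with respect to their graph norms. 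Using compactness of $D(\cAo)\hookrightarrow\H_{1}$, of $N_{0,1}\hookrightarrow\H_{1}$, and of $D(\cAzs)\hookrightarrow\H_{1}$ (the latter via Lemma \ref{cptequi} applied to the hypothesis on $D(\cAz)$), a standard diagonal subsequence argument yields convergence in $\H_{1}$.

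Finite dimensionality of $N_{0,1}$ then follows by Riesz's theorem: $N_{0,1}$ is closed in $\H_{1}$ (intersection of two kernels of closed operators), so it is itself a Hilbert space with the inherited norm, and the identity map on $N_{0,1}$ coincides with the compact embedding $N_{0,1}\hookrightarrow\H_{1}$, forcing $\dim N_{0,1}<\infty$. I do not expect a serious obstacle here; the main subtlety to watch is the interplay between the two graph norms on $D(\Ao)\cap D(\Azs)$ and the orthogonality in $\H_{1}$, which makes the norm-splitting work cleanly so that compactness can be harvested summand by summand.
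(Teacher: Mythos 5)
Your proposal is correct and elaborates exactly the argument the paper has in mind when it asserts the lemma follows "immediately" from the observation $D(\cAo)\subset D(\Ao)\cap N(\Azs)\subset D(\Ao)\cap D(\Azs)$ together with the Helmholtz type decomposition of Lemma \ref{helmrefined}. You correctly avoid circularity by proving the reverse direction via the subspace inclusions alone (no decomposition needed), reserving Remark \ref{clrangecompembrem} and the $\H_1$-orthogonal splitting of the graph norm for the forward direction, and you correctly route $D(\cAz)\hookrightarrow\H_0$ through Lemma \ref{cptequi}.
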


\begin{rem}
\label{complexone}
The assumptions in Lemma \ref{helmrefined} on $\Az$ and $\Ao$
are equivalent to the assumption that
$$\begin{CD}
D(\Az)\subset\H_{0} @> \Az >>
D(\Ao)\subset\H_{1} @> \Ao >>
\H_{2}
\end{CD}$$
is a closed Hilbert complex, meaning that the ranges are closed. 
As a result of the previous lemmas, the adjoint complex
$$\begin{CD}
\H_{0} @< \Azs <<
D(\Azs)\subset\H_{1} @< \Aos <<
D(\Aos)\subset\H_{2} .
\end{CD}$$
is a closed Hilbert complex as well. 
\end{rem}

We can summarize.

\begin{theo}
\label{fatbmaintheogen}
Let $\Az$, $\Ao$ be as introduced before, i.e., having the complex property $\Ao\Az=0$, 
i.e., $R(\Az)\subset N(\Ao)$. Moreover, let $D(\Ao)\cap D(\Azs)\hookrightarrow\H_{1}$ be compact.
Then the assertions of Lemma \ref{helmrefined} hold, $N_{0,1}$ is finite dimensional
and the corresponding inverse operators are continuous resp. compact.
Especially, all ranges are closed and the corresponding Friedrichs/Poincar\'e type estimates hold.
\end{theo}

A special situation is the following.

\begin{lem}
\label{Hilbertadjoint}
Let $\Az$, $\Ao$ be as introduced before 
with $R(\Az)=N(\Ao)$ and $R(\Ao)$ closed in $\H_{2}$. 
Then $R(\Azs)$ and $R(\Aos)$ are closed as well, and the simplified Helmholtz type decompositions
\begin{align*}
\H_{1}
&=R(\Az)\oplus_{\H_{1}}R(\Aos),
&
N_{0,1}
&=\{0\},\\
N(\Ao)
&=R(\Az)=R(\cAz),
&
N(\Azs)
&=R(\Aos)=R(\cAos),\\
D(\Ao)
&=R(\Az)\oplus_{\H_{1}}D(\cAo),
&
D(\Azs)
&=D(\cAzs)\oplus_{\H_{1}}R(\Aos),\\
D(\Ao)\cap D(\Azs)
&=D(\cAzs)\oplus_{\H_{1}}D(\cAo)
\end{align*}
are valid. Moreover, the respective inverse operators are continuous
and the corresponding Friedrichs/ Poincar\'e type estimates hold.
\end{lem}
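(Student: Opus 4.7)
The plan is to derive this lemma as a clean corollary of Lemma \ref{helmrefined} by verifying its hypotheses and then exploiting the stronger exactness assumption $R(\Az)=N(\Ao)$ to collapse the cohomology space $N_{0,1}$ to zero. Nothing new needs to be proved from scratch; the work consists of three short bookkeeping steps.

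First I would establish that all four ranges $R(\Az)$, $R(\Azs)$, $R(\Ao)$, $R(\Aos)$ are closed. The range $R(\Ao)$ is closed by assumption, so Lemma \ref{poincarerange} (applied to $\A=\Ao$) immediately yields that $R(\Aos)$ is closed as well. For $\Az$, the key observation is that $N(\Ao)$ is automatically closed because $\Ao$ is a closed operator; combined with the exactness hypothesis $R(\Az)=N(\Ao)$, this forces $R(\Az)$ to be closed. A second application of Lemma \ref{poincarerange}, now to $\A=\Az$, then gives closedness of $R(\Azs)$ together with the Friedrichs/Poincar\'e estimate on $D(\cAz)$ and continuity of $\cAz^{-1}$.

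Next I would show $N_{0,1}=\{0\}$. By definition $N_{0,1}=N(\Ao)\cap N(\Azs)$, and by exactness this equals $R(\Az)\cap N(\Azs)$. The Helmholtz decomposition \eqref{helm} for $\Az$, which now reads $\H_{1}=N(\Azs)\oplus_{\H_{1}}R(\Az)$ thanks to closedness of $R(\Az)$, makes this intersection trivial. Equivalently, this can be read off directly from \eqref{helmkernappcl} using $N(\Ao)=R(\Az)$.

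With closed ranges and $N_{0,1}=\{0\}$ in hand, every statement of the lemma follows by specializing Lemma \ref{helmrefined}: the triple decomposition $\H_{1}=R(\Az)\oplus_{\H_{1}}N_{0,1}\oplus_{\H_{1}}R(\Aos)$ collapses to $\H_{1}=R(\Az)\oplus_{\H_{1}}R(\Aos)$, the identities $N(\Ao)=R(\cAz)$ and $N(\Azs)=R(\cAos)$ follow from \eqref{RacA}, and the domain decompositions $D(\Ao)=R(\Az)\oplus_{\H_{1}}D(\cAo)$, $D(\Azs)=D(\cAzs)\oplus_{\H_{1}}R(\Aos)$, and $D(\Ao)\cap D(\Azs)=D(\cAzs)\oplus_{\H_{1}}D(\cAo)$ are the corresponding collapses of the formulas in Lemma \ref{helmrefined}. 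Continuity of the four reduced inverses and the Friedrichs/Poincar\'e type estimates are then exactly the content of Lemma \ref{poincarerange}(iii), (iii${}^{*}$) applied separately to $\Az$ and $\Ao$.

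I do not expect any genuine obstacle: the only point one must be careful about is that closedness of $R(\Az)$ is not assumed directly but is a consequence of exactness together with closedness of $N(\Ao)$. Everything else is a reduction to the already-proved Lemmas \ref{poincarerange} and \ref{helmrefined}.
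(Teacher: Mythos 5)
Your proof is correct and matches what the paper clearly intends: the lemma appears without an explicit proof precisely because it is the specialization of Lemma \ref{helmrefined} to the case where exactness $R(\Az)=N(\Ao)$ forces $N_{0,1}=\{0\}$. Your three steps — closedness of all four ranges via $R(\Az)=N(\Ao)$ (a closed set, since $\Ao$ is a closed operator) and Lemma \ref{poincarerange}, then triviality of $N_{0,1}$ from the Helmholtz decomposition \eqref{helm} or \eqref{helmkernappcl}, then collapsing the decompositions of Lemma \ref{helmrefined} — reproduce the intended argument cleanly, and you correctly identify the one point that deserves a word (closedness of $R(\Az)$ is a consequence, not an assumption).
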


\begin{rem}
\label{equiasshelmlem}
Note that $R(\Aos)=N(\Azs)$ and $R(\Azs)$ closed
are equivalent assumptions for Lemma \ref{Hilbertadjoint} to hold.
\end{rem}

\begin{lem}
\label{assduallem}
Let $\Az$, $\Ao$ be as introduced before with
the sequence property \eqref{sequenceprop}, i.e., $R(\Az)\subset N(\Ao)$.
If the embedding $D(\Ao)\cap D(\Azs)\hookrightarrow\H_{1}$ is compact
and $N_{0,1}=\{0\}$, then 
the assumptions of Lemma \ref{Hilbertadjoint} are satisfied.
\end{lem}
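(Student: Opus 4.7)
The plan is to verify the two hypotheses of Lemma \ref{Hilbertadjoint}, namely that $R(\Az)=N(\Ao)$ and that $R(\Ao)$ is closed in $\H_{2}$, by unpacking what the compactness assumption gives us. Both fall out almost directly from the toolbox assembled above.

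First, I would invoke Lemma \ref{compemblem}: the compactness of $D(\Ao)\cap D(\Azs)\hookrightarrow\H_{1}$ is equivalent to the simultaneous compactness of the three embeddings $D(\cAz)\hookrightarrow\H_{0}$, $D(\cAo)\hookrightarrow\H_{1}$, and $N_{0,1}\hookrightarrow\H_{1}$. In particular, both reduced-domain embeddings are compact (the third is automatic here since $N_{0,1}=\{0\}$ by hypothesis).

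Second, I would apply Remark \ref{clrangecompembrem} (equivalently Lemma \ref{lemtoolbox}) separately to $\Az$ and to $\Ao$. Compactness of $D(\cAz)\hookrightarrow\H_{0}$ forces $R(\Az)$ to be closed in $\H_{1}$, and compactness of $D(\cAo)\hookrightarrow\H_{1}$ forces $R(\Ao)$ to be closed in $\H_{2}$. This already delivers one of the two hypotheses of Lemma \ref{Hilbertadjoint}.

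Third, I would use the Helmholtz-type identity \eqref{helmkernappcl}, which reads
$$N(\Ao)=N_{0,1}\oplus_{\H_{1}}\overline{R(\Az)}.$$
Since $R(\Az)$ is now known to be closed and $N_{0,1}=\{0\}$ by hypothesis, the right-hand side collapses to $R(\Az)$, yielding $N(\Ao)=R(\Az)$. Together with closedness of $R(\Ao)$, this is exactly what Lemma \ref{Hilbertadjoint} demands, and the conclusion follows. There is no genuine obstacle here: the statement is a bookkeeping corollary of the functional-analytic machinery developed in Lemmas \ref{poincarerange}--\ref{compemblem}. The only point requiring a little attention is that Remark \ref{clrangecompembrem} must be invoked for \emph{both} operators independently, which is legitimate because Lemma \ref{compemblem} controls both reduced-domain embeddings simultaneously from the single compactness assumption on $D(\Ao)\cap D(\Azs)$.
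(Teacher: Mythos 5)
Your argument is correct and is precisely the intended one: the paper states Lemma \ref{assduallem} without an explicit proof, leaving it as a direct consequence of Lemma \ref{compemblem}, Remark \ref{clrangecompembrem}, and the decomposition \eqref{helmkernappcl}, which is exactly the chain you follow. In particular, your observation that Lemma \ref{compemblem} splits the single compactness hypothesis into simultaneous compactness of $D(\cAz)\hookrightarrow\H_0$ and $D(\cAo)\hookrightarrow\H_1$, so that Remark \ref{clrangecompembrem} can be applied to $\Az$ and $\Ao$ independently to get closedness of both ranges, and then $N(\Ao)=N_{0,1}\oplus_{\H_1}\ovl{R(\Az)}=R(\Az)$, is the efficient and complete way to verify the two hypotheses of Lemma \ref{Hilbertadjoint}.
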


\begin{rem}
\label{complextwo}
The assumptions in Lemma \ref{Hilbertadjoint} on $\Az$ and $\Ao$
are equivalent to the assumption that
$$\begin{CD}
D(\Az)\subset\H_{0} @> \Az >>
D(\Ao)\subset\H_{1} @> \Ao >>
\H_{2}
\end{CD}$$
is a closed and exact Hilbert complex. 
By Lemma \ref{Hilbertadjoint} the adjoint complex
$$\begin{CD}
\H_{0} @< \Azs <<
D(\Azs)\subset\H_{1} @< \Aos <<
D(\Aos)\subset\H_{2} .
\end{CD}$$
is a closed and exact Hilbert complex as well. 
\end{rem}

Parts of Lemma \ref{Hilbertadjoint} hold also in the Banach space setting.
As a direct consequence of the closed range theorem
and the closed graph theorem the following abstract result holds.

\begin{lem}
\label{Banachadjoint}
Let $\X_0$, $\X_1$, $\X_2$ be Banach spaces
and suppose $\Az\in BL(\X_0,\X_1)$, $\Ao\in BL(\X_1,\X_2)$  
with $R(\Az)=N(\Ao)$ and that $R(\Ao)$ is closed in $\X_2$. 
Then $R(\Aza)$ is closed in $\X_0^{\prime}$ and $R(\Aoa)=N(\Aza)$.
Moreover, $(\Aoa)^{-1}\in BL\big(R(\Aoa),R(\Ao)^{\prime}\big)$.
\end{lem}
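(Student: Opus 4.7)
The plan is to derive all three assertions directly from the classical closed range theorem of Banach together with the open mapping theorem, working entirely in the Banach space setting. The key identities needed are the standard ones: for $T\in BL(\X,\Y)$ one has $N(T')=R(T)^{\circ}$ (the annihilator in $\Y'$) in general, and $R(T)$ closed in $\Y$ if and only if $R(T')$ closed in $\X'$, in which case also $R(T')=N(T)^{\circ}$.

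First I would observe that $N(\Ao)$ is closed in $\X_{1}$ (as $\Ao$ is continuous), so from $R(\Az)=N(\Ao)$ the range $R(\Az)$ is closed. Applying the closed range theorem to $\Az$ therefore gives that $R(\Aza)$ is closed in $\X_{0}^{\prime}$, settling the first claim. Applying the closed range theorem to $\Ao$ (whose range is closed by assumption) gives that $R(\Aoa)$ is closed in $\X_{1}^{\prime}$ and
$$R(\Aoa)=N(\Ao)^{\circ}.$$
On the other hand, from the general identity $N(\Aza)=R(\Az)^{\circ}$ and the hypothesis $R(\Az)=N(\Ao)$ one obtains
$$N(\Aza)=R(\Az)^{\circ}=N(\Ao)^{\circ}=R(\Aoa),$$
which proves the second claim.

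For the final assertion, the map $\Aoa:\X_{2}^{\prime}\to\X_{1}^{\prime}$ is not injective in general; its kernel is $N(\Aoa)=R(\Ao)^{\circ}$. Hahn-Banach identifies $R(\Ao)^{\prime}$ isometrically with the quotient $\X_{2}^{\prime}/R(\Ao)^{\circ}=\X_{2}^{\prime}/N(\Aoa)$, so $\Aoa$ factors through a continuous linear bijection
$$\widetilde{\Aoa}\colon R(\Ao)^{\prime}\cong\X_{2}^{\prime}/N(\Aoa)\longrightarrow R(\Aoa).$$
Since $R(\Aoa)$ is closed in the Banach space $\X_{1}^{\prime}$, both the domain and target of $\widetilde{\Aoa}$ are Banach spaces, and the open mapping theorem yields that $\widetilde{\Aoa}^{-1}=(\Aoa)^{-1}\in BL\bigl(R(\Aoa),R(\Ao)^{\prime}\bigr)$.

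The argument is essentially a transcription of the Hilbert space statements in Lemma \ref{poincarerange} and the reduced-operator formalism to the Banach setting; no estimates have to be worked out by hand. The only step that requires any care is the identification $R(\Ao)^{\prime}\cong\X_{2}^{\prime}/N(\Aoa)$ via Hahn-Banach, which is what replaces the orthogonal complement decomposition $\X_{2}=N(\Aoa)^{\bot}\oplus\ovl{R(\Ao)}$ available in Hilbert space; once this identification is made, the open mapping theorem does the rest.
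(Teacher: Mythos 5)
Your proof is correct and follows the route the paper indicates: the first two claims come straight from the closed range theorem applied to $\Az$ and $\Ao$ together with the standard annihilator identities, and the last claim is the bounded inverse (open mapping) theorem applied to $\Aoa$ viewed as a map $R(\Ao)'\to R(\Aoa)$. The Hahn--Banach identification $R(\Ao)'\cong\X_2'/R(\Ao)^{\circ}$ in your write-up just makes explicit what the paper's remark ``consider the operators $\Aoa:R(\Ao)'\to R(\Aoa)$ with $N(\Aoa)=R(\Ao)^{\circ}=\{0\}$'' encodes implicitly, so the two arguments coincide.
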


Note that in the latter context we consider the operators
\begin{align*}
\Ao:\X_{1}&\To R(\Ao),
&
\Aoa:R(\Ao)^{\prime}&\To R(\Aoa)
&
(\Aoa)^{-1}:R(\Aoa)\To R(\Ao)^{\prime},
\end{align*}
with $N(\Aoa)=R(\Ao)^{\circ}=\{0\}$.

\begin{rem}
\label{complexthree}
The conditions on $\Az$ and $\Ao$ in Lemma \ref{Banachadjoint} 
are identical to the assumption that
$$\begin{CD}
\X_0 @> \Az >>
\X_1 @> \Ao >>
\X_2
\end{CD}$$
is a closed and exact complex of Banach spaces. 
The consequences of Lemma \ref{Banachadjoint} can be rephrased as follows. 
The adjoint complex of Banach spaces
$$\begin{CD}
\X_0^{\prime} @< \Aza <<
\X_1^{\prime} @< \Aoa <<
\X_2^{\prime}
\end{CD}$$
is closed and exact as well.
\end{rem}

\begin{lem}
\mylabel{falem}
$(\Aoa)^{-1}\in BL\big(R(\Aoa),R(\Ao)^{\prime}\big)$ is equivalent to
\begin{align}
\mylabel{invcontmod}
\exists\,c_{\Aoa}>0\quad
\forall\,y^{\prime}\in R(\Ao)^{\prime}\qquad
\norm{y^{\prime}}_{R(\Ao)^{\prime}}\leq c_{\Aoa}\norm{\Aoa y^{\prime}}_{\X_{1}^{\prime}} .
\end{align}
For the best constant $c_{\Aoa}$, 
\eqref{invcontmod} is equivalent to the general inf-sup-condition
\begin{align}
\mylabel{infsupgenprime}
0<\frac{1}{c_{\Aoa}}
=\inf_{0\neq y^{\prime}\in R(\Ao)^{\prime}}\sup_{0\neq x\in\X_{1}}
\frac{\scp{y^{\prime}}{\Ao x}_{R(\Ao)^{\prime}}}{\norm{y^{\prime}}_{R(\Ao)^{\prime}}\norm{x}_{\X_{1}}}.
\end{align}
In the special case that $\X_{2}=\H_{2}$ is a Hilbert space the closed subspace $R(\Ao)$ is isometrically isomorphic to $R(\Ao)^{\prime}$ and we obtain the following form of the inf-sup-condition
\begin{align}
\mylabel{infsupgen}
0<\frac{1}{c_{\Aoa}}
=\inf_{0\neq y\in R(\Ao)}\sup_{0\neq x\in\X_{1}}
\frac{\scp{y}{\Ao x}_{\H_{2}}}{\norm{y}_{\H_{2}}\norm{x}_{\X_{1}}}.
\end{align}
\end{lem}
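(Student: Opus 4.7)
The proof is essentially a bookkeeping exercise translating between three equivalent formulations of the boundedness of $(\Aoa)^{-1}$. The key input is Lemma \ref{Banachadjoint}, from which we already know that $\Aoa\colon R(\Ao)^{\prime}\to R(\Aoa)$ is a bijection, since $N(\Aoa)=R(\Ao)^{\circ}=\{0\}$ and the range $R(\Aoa)$ is closed.

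First I would establish the equivalence $(\Aoa)^{-1}\in BL(R(\Aoa),R(\Ao)^{\prime})$ $\iff$ \eqref{invcontmod}. Since $\Aoa$ is a bijection $R(\Ao)^{\prime}\to R(\Aoa)$, boundedness of the inverse with norm at most $c_{\Aoa}$ is the statement
$$\|y^{\prime}\|_{R(\Ao)^{\prime}}=\|(\Aoa)^{-1}(\Aoa y^{\prime})\|_{R(\Ao)^{\prime}}\leq c_{\Aoa}\|\Aoa y^{\prime}\|_{\X_{1}^{\prime}}$$
for every $y^{\prime}\in R(\Ao)^{\prime}$, which is exactly \eqref{invcontmod}; the best constants coincide.

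Next I would show \eqref{invcontmod} $\iff$ \eqref{infsupgenprime}. By the definition of the dual norm on $\X_{1}^{\prime}$ and of the Banach space adjoint,
$$\|\Aoa y^{\prime}\|_{\X_{1}^{\prime}}=\sup_{0\neq x\in\X_{1}}\frac{(\Aoa y^{\prime})(x)}{\|x\|_{\X_{1}}}=\sup_{0\neq x\in\X_{1}}\frac{\scp{y^{\prime}}{\Ao x}_{R(\Ao)^{\prime}}}{\|x\|_{\X_{1}}},$$
where we use that the supremum over signed $x$ coincides with the one over $|\cdot|$. Dividing by $\|y^{\prime}\|_{R(\Ao)^{\prime}}>0$ and taking the infimum over $y^{\prime}\in R(\Ao)^{\prime}\setminus\{0\}$ immediately converts \eqref{invcontmod} into \eqref{infsupgenprime} with identical optimal constants; the positivity of the inf-sup value is equivalent to the existence of a finite $c_{\Aoa}$.

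Finally, in the Hilbert case $\X_{2}=\H_{2}$, I would use the Riesz isomorphism. Since $R(\Ao)$ is a closed subspace of $\H_{2}$ (by the standing hypothesis), it is itself a Hilbert space with the induced scalar product and therefore isometrically isomorphic to $R(\Ao)^{\prime}$ via the map $y\mapsto\scp{y}{\cdot}_{\H_{2}}\big|_{R(\Ao)}$. Under this identification $\|y^{\prime}\|_{R(\Ao)^{\prime}}=\|y\|_{\H_{2}}$ and $\scp{y^{\prime}}{\Ao x}_{R(\Ao)^{\prime}}=\scp{y}{\Ao x}_{\H_{2}}$, so \eqref{infsupgenprime} is rewritten as \eqref{infsupgen}, again with the same best constant.

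There is no genuine obstacle here; the only point requiring a touch of care is to remember that, via Lemma \ref{Banachadjoint}, we must view $\Aoa$ as acting on $R(\Ao)^{\prime}$ rather than on $\X_{2}^{\prime}$ so that $\Aoa$ is actually injective. Once this is noted, each step is a direct consequence of standard duality identities, and checking that all best constants match is routine.
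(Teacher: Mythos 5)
Your proof is correct and fills in precisely the standard duality argument that the paper leaves unproven: Lemma \ref{falem} is stated without proof, the authors remarking only that the results of the section are well-known in functional analysis. Your three steps — identifying boundedness of the bijection $(\Aoa)^{-1}$ with estimate \eqref{invcontmod}, unfolding the operator norm $\norm{\Aoa y^{\prime}}_{\X_{1}^{\prime}}=\sup_{0\neq x}\scp{y^{\prime}}{\Ao x}_{R(\Ao)^{\prime}}/\norm{x}_{\X_{1}}$ (legitimately dropping absolute values over the real field), and then invoking the Riesz isometry of the closed subspace $R(\Ao)$ onto $R(\Ao)^{\prime}$ — are exactly the right moves, and your emphasis on reading $\Aoa$ as a map $R(\Ao)^{\prime}\to R(\Aoa)$ (so that Lemma \ref{Banachadjoint} gives injectivity and surjectivity) matches the remark the authors place just before the lemma.
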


The results collected in this section are well-known in functional analysis. We refer to \cite{Arnold:2006} for a presentation of some results of this section
from a numerical analysis perspective.

\subsection{Sobolev Spaces}

Next we introduce our notations for several classes of Sobolev spaces 
on a bounded domain $\om\subset\rt$. 
Let $m\in\nz$. We denote by $\ltom$ and $\hmom$
the standard Lebesgue and Sobolev spaces and write $\hzom=\ltom$. 
Our notation of spaces will not indicate whether the elements are scalar functions or vector fields.
For the rotation and divergence we define the Sobolev spaces
$$\rom:=\setb{\vecv\in\ltom}{\rot \vecv\in\ltom},\quad
\dom:=\setb{\vecv\in\ltom}{\div\vecv\in\ltom}$$
with the respective graph norms, where $\rot$ and $\div$ have to be understood 
in the distributional or weak sense.
We introduce spaces with homogeneous boundary conditions in the weak sense naturally by
$$\hocom:=\H(\gradc,\om):=\ovl{\cicom}^{\hoom},\quad
\rcom:=\ovl{\cicom}^{\rom},\quad
\dcom:=\ovl{\cicom}^{\dom},$$
and
$$\hmcom:=\ovl{\cicom}^{\hmom},$$
i.e., as closures of test functions or fields under the respective graph norms,
which generalizes homogeneous scalar, tangential and normal boundary conditions, respectively.
We also introduce the well known dual spaces 
$$\hmmom:=\big(\hmcom\big)^{\prime}$$
with the standard dual or operator norm defined by
\begin{align*}
\normhmmom{u}
&:=\sup_{0\neq\varphi\in\hmcom}\frac{\scphmmom{u}{\varphi}}{\norm{\varphi}_{\hmcom}}
\qquad
\text{for} \ u\in\hmmom,
\end{align*}
where we recall the duality pairing 
$\scp{\,\cdot\,}{\,\cdot\,}_{\hmmom}$ in $\big(\hmmom,\hmcom\big)$. 
Moreover, we define with respective graph norms
\begin{align*}
\rhmmom
&:=\setb{\vecv\in\hmmom}{\rot \vecv\in\hmmom},
\\
\dhmmom
&:=\setb{\vecv\in\hmmom}{\div\vecv\in\hmmom}.
\end{align*}
A vanishing differential operator
will be indicated by a zero at the lower right corner of the operator, e.g.,
\begin{align*}
\rzom
&\;=\setb{\vecv\in\rom}{\rot \vecv=0},
&
\dczom
&\;=\setb{\vecv\in\dcom}{\div\vecv=0},\\
\rhmmzom
&\;=\setb{\vecv\in\rhmmom}{\rot \vecv=0},
&\dhmozom
&\;=\setb{\vecv\in\dhmoom}{\div\vecv=0}.
\end{align*}
Let us also introduce
$$\ltzom
:=\setb{u\in\ltom}{u\,\bot_{\ltom}\,\reals}
=\setb{u\in\ltom}{\int_{\om}u=0},$$
where $\bot_{\ltom}$ denotes orthogonality in $\ltom$.

\subsection{General Assumptions}

We will impose the following regularity and topology assumptions on our domain $\om$.

\begin{defi}
\mylabel{admdef}
Let $\om$ be an open subset of $\rt$ with boundary $\ga:=\p\om$.
We will call $\om$ 
\begin{itemize}
\item[\bf(i)] 
strong Lipschitz, if $\ga$ is locally a graph of 
a Lipschitz function $\psi:U\subset\reals^2\to\reals$,
\item[\bf(ii)] 
topologically trivial, if $\om$ is simply connected with connected boundary $\ga$.
\end{itemize}
\end{defi}

\begin{genasslem}
\mylabel{genasspaper}
From now on and throughout this paper 
it is assumed that $\om\subset\rt$ is a bounded strong Lipschitz domain.
\end{genasslem}

If the domain $\om$ has to be topologically trivial,
we will always indicate this in the respective result.
Note that several results will hold for arbitrary open subsets $\om$ of $\rt$.
All results are valid for bounded and topologically trivial strong Lipschitz domains $\om\subset\rt$.
Nevertheless, most of the results will remain true for bounded strong Lipschitz domains $\om\subset\rt$.

\subsection{Vector Analysis}

In this last part of the preliminary section we summarize and prove several results 
related to scalar and vector potentials of various smoothness,
corresponding Friedrichs/Poincar\'e type estimates,
and related Helmholtz decompositions of $\ltom$ and other Hilbert and Sobolev spaces.
This is a first application of the functional analysis toolbox Section \ref{fuanasec}
for the operators $\gradc$, $\rotc$, $\divc$, and their adjoints $-\div$, $\rot$, $-\grad$.
Although these are well known facts, we recall and collect them here,
as we will use later similar techniques to obtain related results
for the more complicated operators $\Gradgradc$, $\RotcS$, $\DivcT$,
and their adjoints $\divDivS$, $\symRotT$, $-\devGrad$. Let 
\begin{align*}
\Az:=\gradc:\hocom\subset\ltom&\To\ltom,\\
\Ao:=\rotc:\rcom\subset\ltom&\To\ltom,\\
\At:=\divc:\dcom\subset\ltom&\To\ltom.
\intertext{Then $\Az$, $\Ao$, and $\At$ are unbounded, densely defined, and closed linear operators with adjoints}
\Azs=\gradc{}^{*}=-\div:\dom\subset\ltom&\To\ltom,\\
\Aos=\rotc{}^{*}=\rot:\rom\subset\ltom&\To\ltom,\\
\Ats=\divc{}^{*}=-\grad:\hoom\subset\ltom&\To\ltom
\end{align*}
and the sequence or complex properties
\begin{align*}
R(\Az)=\gradc\,\hocom&\subset\rczom=N(\Ao),
&
R(\Aos)=\rot\rom&\subset\dzom=N(\Azs),\\
R(\Ao)=\rotc\,\rcom&\subset\dczom=N(\At),
&
R(\Ats)=\grad\hoom&\subset\rzom=N(\Aos)
\end{align*}
hold. Note $N(\Az)=\{0\}$ and $N(\Ats)=\reals$.
Moreover, the embeddings
\begin{align*}
D(\Ao)\cap D(\Azs)
&=\rcom\cap\dom\hookrightarrow\ltom,\\
D(\At)\cap D(\Aos)
&=\dcom\cap\rom\hookrightarrow\ltom
\end{align*}
are compact. The latter compact embeddings are called
Maxwell compactness properties or Weck's selection theorems.
The first proof for strong Lipschitz domains (uniform cone like domains) 
avoiding smoothness of $\ga$ was given by Weck in \cite{weckmax}.
Generally, Weck's selection theorems hold e.g. for weak Lipschitz domains, see \cite{picardcomimb},
or even for more general domains with $p$-cusps or antennas, see \cite{witschremmax,picardweckwitschxmas}.
See also \cite{webercompmax} for a different proof in the case of a strong Lipschitz domain. 
Weck's selection theorem for mixed boundary conditions 
has been proved in \cite{jochmanncompembmaxmixbc} for strong Lipschitz domains
and recently in \cite{bauerpaulyschomburgmcpweaklip} for weak Lipschitz domains.
Similar to Rellich's selection theorem, i.e.,
the compact embedding of $\hocom$ resp. $\hoom$ into $\ltom$,
it is crucial that the domain $\om$ is bounded.
Finally, the kernels
\begin{align*}
N(\Ao)\cap N(\Azs)
&=\rczom\cap\dzom=:\harmdom,\\
N(\At)\cap N(\Aos)
&=\dczom\cap\rzom=:\harmnom,
\end{align*}
are finite dimensional, as the unit balls are compact,
i.e., the spaces of Dirichlet resp. Neumann fields are finite dimensional.
More precisely, the dimension of the Dirichlet resp. Neumann fields
depends on the topology or cohomology of $\om$, i.e., equals the second resp. first Betti number,
see e.g. \cite{picardharmdiff,picardboundaryelectro}.
Especially we have
$$\harmdom=\{0\},\text{ if $\ga$ is connected,}\qquad
\harmnom=\{0\},\text{ if $\om$ is simply connected.}$$

\begin{rem}
\mylabel{admrem}
Our general assumption on $\om$ to be bounded and strong Lipschitz
ensures that Weck's selection theorems (and thus also Rellich's) hold.
The additional assumption that $\om$ is also topologically trivial
excludes the existence of non-trivial Dirichlet
or Neumann fields, as $\om$ is simply connected with a connected boundary $\ga$.
\end{rem}

By the results of the functional analysis toolbox Section \ref{fuanasec} 
we see that all ranges are closed with 
\begin{align*}
R(\Az)&=R(\cAz),
&
R(\Ao)&=R(\cAo),
&
R(\At)&=R(\cAt),\\
R(\Azs)&=R(\cAzs),
&
R(\Aos)&=R(\cAos),
&
R(\Ats)&=R(\cAts),
\end{align*}
i.e., the ranges 
\begin{align}
\nonumber
\gradc\,\hocom&,
&
\grad\hoom&=\grad\big(\hoom\cap\ltzom\big),\\
\label{rangesgradrotdiv}
\rotc\,\rcom&=\rotc\big(\rcom\cap\rot\rom\big),
&
\rot\rom&=\rot\big(\rom\cap\rotc\,\rcom\big),\\
\nonumber
\divc\,\dcom&=\divc\big(\dcom\cap\grad\hoom\big),
&
\div\dom&=\div\big(\dom\cap\gradc\,\hocom\big)
\end{align}
are closed, and the reduced operators are
\begin{align*}
\cAz=\gradc:\hocom\subset\ltom&\To\gradc\,\hocom,\\
\cAo=\rotc:\rcom\cap\rot\rom\subset\rot\rom&\To\rot\rcom,\\
\cAt=\divc:\dcom\cap\grad\hoom\subset\grad\hoom&\To\ltzom,\\
\cAzs=-\div:\dom\cap\grad\hocom\subset\grad\hocom&\To\ltom,\\
\cAos=\rot:\rom\cap\rotc\,\rcom\subset\rotc\,\rcom&\To\rot\rom,\\
\cAts=-\grad:\hoom\cap\ltzom\subset\ltzom&\To\grad\hoom.
\end{align*}
Moreover, we have the following well known Helmholtz decompositions of $\lt$-vector fields
into irrotational and solenoidal vector fields,
corresponding Friedrichs/Poincar\'e type estimates and continuous 
or compact inverse operators.

\begin{lem}
\mylabel{genhelmlt}
The Helmholtz decompositions
\begin{align*}
\ltom&=\divc\,\dcom\oplus_{\ltom}\reals,\quad
\divc\,\dcom=\ltzom,\\
\ltom&=\div\dom,\\
\ltom&=\gradc\,\hocom\oplus_{\ltom}\dzom\\
&=\rczom\oplus_{\ltom}\rot\rom\\
&=\gradc\,\hocom\oplus_{\ltom}\harmdom\oplus_{\ltom}\rot\rom,\\
\ltom&=\grad\hoom\oplus_{\ltom}\dczom\\
&=\rzom\oplus_{\ltom}\rotc\,\rcom\\
&=\grad\hoom\oplus_{\ltom}\harmnom\oplus_{\ltom}\rotc\,\rcom
\end{align*}
hold. Moreover, \eqref{rangesgradrotdiv} is true for the respective ranges
and the ``better'' potentials in \eqref{rangesgradrotdiv} are uniquely determined
and depend continuously in the right hand sides.
If $\ga$ is connected, it holds $\harmdom=\{0\}$ and, e.g.,
\begin{align*}
\ltom
&=\rczom
\oplus\dzom,\\
\rczom
&=\gradc\,\hocom,
&
\dzom
&=\rot\rom=\rot\big(\rom\cap\dczom\big).
\intertext{If $\om$ is simply connected, it holds $\harmnom=\{0\}$ and, e.g.,}
\ltom
&=\rzom
\oplus\dczom,\\
\rzom
&=\grad\hoom,
&
\dczom
&=\rotc\,\rcom=\rotc\big(\rcom\cap\dzom\big).
\end{align*}
\end{lem}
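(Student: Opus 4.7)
The plan is to apply the abstract functional analysis toolbox of Section \ref{fuanasec}, particularly Theorem \ref{fatbmaintheogen} and Lemma \ref{helmrefined}, to the three dual pairs $(\gradc,-\div)$, $(\rotc,\rot)$, and $(\divc,-\grad)$ that together form the de Rham Hilbert complex and its adjoint. All structural hypotheses are already collected above: the complex properties $R(\gradc)\subset N(\rotc)$ and $R(\rotc)\subset N(\divc)$ are classical, the operators are closed and densely defined with the stated adjoints, and the compact embeddings $\hocom,\hoom\hookrightarrow\ltom$ (Rellich) together with $\rcom\cap\dom,\dcom\cap\rom\hookrightarrow\ltom$ (Weck) have been imported.

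First I would handle the outer levels. Applied to the dual pair $(\divc,-\grad)$, compactness of $\hoom\hookrightarrow\ltom$ forces closedness of $R(\divc)$ via Lemma \ref{lemtoolbox}, and the Helmholtz decomposition \eqref{helm} combined with $N(-\grad)=\reals$ gives
$$\ltom=\reals\oplus_{\ltom}\divc\,\dcom,$$
whence $\divc\,\dcom=\ltzom$. Dually, for $(-\div,\gradc)$, compactness of $\hocom\hookrightarrow\ltom$ together with $N(\gradc)=\{0\}$ yields $\ltom=\div\dom$.

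Next I would treat the two interior pairs $(\gradc,\rotc)$ and $(\rotc,\divc)$ by Theorem \ref{fatbmaintheogen}; in each case the required compact embedding $D(\Ao)\cap D(\Azs)\hookrightarrow\ltom$ is exactly one of Weck's selection theorems. The theorem then delivers closed ranges, finite-dimensional cohomology $N_{0,1}$, and the three-term splitting $\ltom=R(\Az)\oplus_{\ltom}N_{0,1}\oplus_{\ltom}R(\Aos)$. For $(\gradc,\rotc)$ this produces
$$\ltom=\gradc\,\hocom\oplus_{\ltom}\harmdom\oplus_{\ltom}\rot\rom,\qquad N_{0,1}=\rczom\cap\dzom=\harmdom;$$
merging two summands via the complex property gives the coarser identities $R(\gradc)\oplus_{\ltom}\harmdom=\rczom$ and $\harmdom\oplus_{\ltom}\rot\rom=\dzom$, hence $\ltom=\gradc\,\hocom\oplus_{\ltom}\dzom=\rczom\oplus_{\ltom}\rot\rom$. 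The symmetric argument applied to $(\rotc,\divc)$ yields the $\harmnom$-based decompositions of the second complex.

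Finally, the range representations \eqref{rangesgradrotdiv} together with uniqueness and continuous dependence of the ``better'' potentials follow at once from the refined splitting $D(\A)=N(\A)\oplus_{\H_{1}}D(\cA)$ of \eqref{DacA} combined with Lemma \ref{poincarerange}(iii), which turns each reduced operator into a continuous bijection from $D(\cA)$ onto $R(\A)$. The two topological special cases are then immediate consequences of Remark \ref{admrem}: when $\ga$ is connected the second Betti number vanishes so $\harmdom=\{0\}$, collapsing the first three-term splitting to $\ltom=\rczom\oplus_{\ltom}\dzom$ with $\rczom=\gradc\,\hocom$ and $\dzom=\rot\rom=\rot(\rom\cap\dczom)$, while simple-connectedness gives $\harmnom=\{0\}$ with the analogous dual conclusion. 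I expect no genuine obstacle beyond careful bookkeeping, since all the analytic heavy lifting (Weck's and Rellich's compactness results, plus finite dimensionality of harmonic forms) has already been cited; the only point requiring attention is consistently pairing each kernel among $\rzom$, $\rczom$, $\dzom$, $\dczom$ with the correct range complement across the six bilateral combinations.
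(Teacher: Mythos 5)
Your proposal is correct and takes essentially the same route as the paper: the text immediately preceding Lemma \ref{genhelmlt} sets up precisely the dual pairs $(\gradc,-\div)$, $(\rotc,\rot)$, $(\divc,-\grad)$, invokes Rellich's and Weck's compactness results, and presents the lemma as a direct consequence of the functional analysis toolbox (Lemma \ref{helmrefined}, Theorem \ref{fatbmaintheogen}). The bookkeeping you describe — closedness of ranges, three-term splittings, merging of summands via the complex property, and collapsing under trivial topology via Remark \ref{admrem} — matches the paper's implicit argument.
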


\begin{lem}
\label{poincaregradrotdiv}
The following Friedrichs/Poincar\'e type estimates hold.
There exist positive constants $c_{\mathsf g}$, $c_{\mathsf r}$, $c_{\mathsf d}$, such that
\begin{align*}
\forall\,u&\in\hocom
&
\normltom{u}&\leq c_{\mathsf g}\,\normltom{\grad u},\\
\forall\,\vecv&\in\dom\cap\gradc\,\hocom
&
\normltom{\vecv}&\leq c_{\mathsf g}\,\normltom{\div\vecv},\\
\forall\,\vecv&\in\rcom\cap\rot\rom
&
\normltom{\vecv}&\leq c_{\mathsf r}\,\normltom{\rot \vecv},\\
\forall\,\vecv&\in\rom\cap\rotc\,\rcom
&
\normltom{\vecv}&\leq c_{\mathsf r}\,\normltom{\rot \vecv},\\
\forall\,\vecv&\in\dcom\cap\grad\hoom
&
\normltom{\vecv}&\leq c_{\mathsf d}\,\normltom{\div\vecv},\\
\forall\,u&\in\hoom\cap\ltzom
&
\normltom{u}&\leq c_{\mathsf d}\,\normltom{\grad u}.
\end{align*}
Moreover, the reduced versions of the operators 
$$\gradc,\quad
\rotc,\quad
\divc,\quad
\grad,\quad
\rot,\quad
\div$$
have continuous resp. compact inverse operators 
\begin{align*}
\gradc{}^{-1}:
\gradc\,\hocom&\To\hocom,
&
\gradc{}^{-1}:
\gradc\,\hocom&\To\ltom,\\
\div^{-1}:
\ltom&\To\dom\cap\gradc\,\hocom,
&
\div^{-1}:
\ltom&\To\gradc\,\hocom\subset\ltom,\\
\rotc{}^{-1}:
\rotc\,\rcom&\To\rcom\cap\rot\rom,
&
\rotc{}^{-1}:
\rotc\,\rcom&\To\rot\rom\subset\ltom,\\
\rot^{-1}:
\rot\rom&\To\rom\cap\rotc\,\rcom,
&
\rot^{-1}:
\rot\rom&\To\rotc\,\rcom\subset\ltom,\\
\divc{}^{-1}:
\ltzom&\To\dcom\cap\grad\hoom,
&
\divc{}^{-1}:
\ltzom&\To\grad\hoom\subset\ltom,\\
\grad^{-1}:
\grad\hoom&\To\hoom\cap\ltzom,
&
\grad^{-1}:
\grad\hoom&\To\ltzom,
\end{align*}
with norms $(1+c_{\mathsf g}^2)^{\oh}$, $(1+c_{\mathsf r}^2)^{\oh}$, $(1+c_{\mathsf d}^2)^{\oh}$
resp. $c_{\mathsf g}$, $c_{\mathsf r}$, $c_{\mathsf d}$.
In other words, the operators
\begin{align*}
\gradc:\hocom&\To\gradc\,\hocom,
&
\div:\dom\cap\gradc\,\hocom&\To\ltom,\\
u&\longmapsto\grad u
&
\vecv&\longmapsto\div\vecv\\
\rotc:\rcom\cap\rot\rom&\To\rotc\,\rcom,
&
\rot:\rom\cap\rotc\,\rcom&\To\rot\rom,\\
\vecv&\longmapsto\rot \vecv
&
\vecv&\longmapsto\rot \vecv\\
\divc:\dcom\cap\grad\hoom&\To\ltzom,
&
\grad:\hoom\cap\ltzom&\To\grad\hoom,\\
\vecv&\longmapsto\div\vecv
&
u&\longmapsto\grad u
\end{align*}
are topological isomorphisms. If $\om$ is topologically trivial, then
\begin{align}
\nonumber
\gradc:\hocom&\To\rczom,
&
\div:\dom\cap\rczom&\To\ltom,\\
\nonumber
u&\longmapsto\grad u
&
\vecv&\longmapsto\div\vecv\\
\label{topiso}
\rotc:\rcom\cap\dzom&\To\dczom,
&
\rot:\rom\cap\dczom&\To\dzom,\\
\nonumber
\vecv&\longmapsto\rot \vecv
&
\vecv&\longmapsto\rot \vecv\\
\nonumber
\divc:\dcom\cap\rzom&\To\ltzom,
&
\grad:\hoom\cap\ltzom&\To\rzom,\\
\nonumber
\vecv&\longmapsto\div\vecv
&
u&\longmapsto\grad u
\end{align}
are topological isomorphisms.
\end{lem}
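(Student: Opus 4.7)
The proposal is to obtain all statements as a direct application of the functional analysis toolbox of Section \ref{fuanasec} to each of the three dual pairs $(\gradc,-\div)$, $(\rotc,\rot)$, $(\divc,-\grad)$, using the compactness results (Rellich's and Weck's selection theorems) that have already been recorded immediately before the lemma.

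First I would verify that for every one of the six reduced operators the relevant embedding $D(\cA)\hookrightarrow\ltom$ is compact. Using the fact that the de Rham kernels are $N(\gradc)=\{0\}$ and $N(\grad)=\reals$, the reduced domains coincide or inject into the following spaces:
\begin{align*}
D(\cAz)&=\hocom,
&
D(\cAts)&=\hoom\cap\ltzom,\\
D(\cAo)&=\rcom\cap\rot\rom\subset\rcom\cap\dom,
&
D(\cAos)&=\rom\cap\rotc\,\rcom\subset\rom\cap\dcom,\\
D(\cAt)&=\dcom\cap\grad\hoom\subset\dcom\cap\rom,
&
D(\cAzs)&=\dom\cap\gradc\,\hocom\subset\dom\cap\rcom.
\end{align*}
The first row is compactly embedded into $\ltom$ by Rellich's selection theorem, and the rows involving the mixed domains $\rcom\cap\dom$ resp.\ $\dcom\cap\rom$ are compactly embedded by the two Maxwell/Weck compactness properties quoted before the lemma.

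Next I would feed these compact embeddings into Lemma \ref{lemtoolbox} applied to each dual pair, which simultaneously yields (a) that all six ranges are closed, (b) the existence of positive constants for which the stated Friedrichs/Poincar\'e type inequalities hold on the respective reduced domains, and (c) the continuity, even compactness, of the inverse operators $\cA^{-1}\!:\!R(\A)\to D(\cA)$ (and likewise for $\cAs$). Lemma \ref{lemconstants} then ensures that the constants coming from $\gradc$ and $-\div$ coincide, and similarly for the pairs $(\rotc,\rot)$ and $(\divc,-\grad)$, which justifies calling them $c_{\mathsf g}$, $c_{\mathsf r}$, $c_{\mathsf d}$. The explicit bounds $(1+c_\star^2)^{1/2}$ for the ``graph-norm'' versions of the inverses and $c_\star$ for the ``$L^2$-norm'' versions are exactly the quantitative content of Lemma \ref{poincarerange}(iii)/(iii${}^{*}$) and Lemma \ref{lemtoolbox}. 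Combining closed range and the Poincar\'e estimates, each reduced operator is injective, surjective onto its range, and continuous with continuous inverse, hence a topological isomorphism, giving the six isomorphism statements of the lemma.

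Finally, for the topologically trivial case I would invoke the observation (already recorded in Remark \ref{admrem}) that simple connectedness with connected boundary forces $\harmdom=\{0\}$ and $\harmnom=\{0\}$. The refined Helmholtz decomposition of Lemma \ref{genhelmlt} then collapses to $\rczom=\gradc\,\hocom$ and $\dczom=\rotc\,\rcom$ (and similarly on the dual side $\rzom=\grad\hoom$), which, when substituted into the domains and codomains of the isomorphisms from the previous paragraph, produces exactly \eqref{topiso}.

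The only non-routine step is the verification of the compact embeddings $\rcom\cap\dom\hookrightarrow\ltom$ and $\dcom\cap\rom\hookrightarrow\ltom$ for strong Lipschitz $\om$; but these are precisely Weck's selection theorems, which the paper imports from \cite{weckmax,webercompmax,picardcomimb} and therefore are available here. Once the compactness is in hand, everything reduces to the abstract machinery of Section \ref{fuanasec}.
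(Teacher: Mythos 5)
Your proposal is correct and matches the paper's intended (implicit) argument: the lemma is stated without a displayed proof precisely because it follows by routing the three dual pairs through the functional analysis toolbox (Lemma \ref{lemtoolbox}, Lemma \ref{poincarerange}, Lemma \ref{lemconstants}) once the Rellich and Weck compact embeddings recorded just above are in place, and by specializing via Lemma \ref{genhelmlt} and Remark \ref{admrem} to obtain \eqref{topiso} in the topologically trivial case.
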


\begin{rem} 
\label{Maxconsts}
Recently it has been shown in \cite{paulymaxconst0,paulymaxconst1,paulymaxconst2}, 
that for bounded and convex $\om\subset\rt$ it holds
$$c_{\mathsf r}\leq c_{\mathsf d}\leq\frac{\diam\om}{\pi},$$
i.e., the Maxwell constant $c_{\mathsf r}$ can be estimates from above by 
the Poincar\'e constant $c_{\mathsf d}$.
\end{rem}

\begin{rem} 
\label{complexgradrotdiv}
Some of the previous results can be formulated equivalently in terms of complexes: The sequence
$$\begin{CD}
\{0\} 
@> 0 >>
\hocom @> \gradc >>
\rcom @> \rotc >>
\dcom @> \divc >>
\ltom @> \pi_{\reals} >>
\reals
\end{CD}$$
and thus also its dual or adjoint sequence
$$\begin{CD}
\{0\} 
@< 0 <<
\ltom @< -\div <<
\dom @< \rot << 
\rom @< -\grad <<
\hoom @< \iota_{\reals} <<
\reals
\end{CD}$$
are closed Hilbert complexes. 
Here $\pi_{\reals}:\ltom\to\reals$ 
denotes the orthogonal projector onto $\reals$
with adjoint $\pi_{\reals}^{*}=\iota_{\reals}:\reals\to\ltom$,
the canonical embedding. If $\om$ is additionally topologically trivial,
then the complexes are also exact. These complexes are widely known as de Rham complexes.
\end{rem}

Let $\om$ be additionally topologically trivial.
For irrotational vector fields in $\hmcom$ resp. $\hmom$ we have smooth potentials,
which follows immediately by $\rczom=\gradc\,\hocom$ resp. $\rzom=\grad\hoom$ from the previous lemma.

\begin{lem}
\mylabel{napotlem}
Let $\om$ be additionally topologically trivial and $m\in\nz$. Then 
$$\hmcom\cap\rczom=\gradc\,\hmpocom,\qquad
\hmom\cap\rzom=\grad\hmpoom$$
hold with linear and continuous potential operators $\Pot_{\gradc}$, $\Pot_{\grad}$. 
\end{lem}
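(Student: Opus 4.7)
The plan is to reduce both identities to Lemma \ref{genhelmlt}, which already supplies potentials on the $\hoc$/$\hoom$-level thanks to the topologically trivial hypothesis on $\om$, and then to bootstrap the regularity by a zero-extension argument on $\rt$.

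For the first identity $\hmcom\cap\rczom=\gradc\,\hmpocom$, the inclusion ``$\supset$'' is clear: if $u\in\hmpocom$ and $\phi_{n}\in\cicom$ approximates $u$ in $\hmpoom$, then $\grad\phi_{n}\in\cicom$ converges to $\grad u$ in $\hmom$, so $\grad u\in\hmcom\cap\rczom$. For the reverse inclusion I take $\vecv\in\hmcom\cap\rczom$ and apply Lemma \ref{genhelmlt} together with the Friedrichs estimate of Lemma \ref{poincaregradrotdiv} to produce a unique $u\in\hocom$ with $\gradc u=\vecv$ and $\norm{u}_{\hocom}\leq c\,\norm{\vecv}_{\ltom}$. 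I set $\Pot_{\gradc}\vecv:=u$, which is obviously linear in $\vecv$.

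The main step is then the regularity lift from $\hocom$ to $\hmpocom$. I plan to extend $u$ and $\vecv$ by zero to $\rt$: since $u\in\hocom$ and $\vecv\in\hmcom$, approximation by $\cicom$-functions shows that the zero extensions satisfy $\tilde u\in\H^{1}(\rt)$ and $\tilde\vecv\in\H^{m}(\rt)$, and passing to the limit in $\grad\tilde\phi_{n}=\widetilde{\grad\phi_{n}}$ gives $\grad\tilde u=\tilde\vecv$ distributionally on $\rt$. Consequently $\tilde u\in\H^{m+1}(\rt)$ with $\norm{\tilde u}_{\H^{m+1}(\rt)}\leq c\,\norm{\vecv}_{\hmcom}$. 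Since $\tilde u$ is supported in $\ovl\om$ and $\om$ is bounded strong Lipschitz, the usual inward-translation-plus-mollification construction yields a sequence in $\cicom$ approximating $u$ in $\hmpoom$, so that $u\in\hmpocom$ with the continuity bound. This last step is the only non-routine point: the strong Lipschitz hypothesis is exactly what is needed to push the $\hocom$ boundary condition all the way up to the $\hmpocom$ one.

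For the second identity $\hmom\cap\rzom=\grad\hmpoom$, the inclusion ``$\supset$'' is trivial. For ``$\subset$'', given $\vecv\in\hmom\cap\rzom$, Lemma \ref{genhelmlt} supplies $u\in\hoom$ with $\grad u=\vecv$, made unique by the normalization $u\in\ltzom$, after which $\norm{u}_{\ltom}\leq c\,\norm{\vecv}_{\ltom}$ by Lemma \ref{poincaregradrotdiv}. Since $\grad u=\vecv\in\hmom$, all first derivatives of $u$ lie in $\hmom$, so $u\in\hmpoom$ with $\norm{u}_{\hmpoom}\leq c\,(\norm{u}_{\ltom}+\norm{\vecv}_{\hmom})\leq c'\,\norm{\vecv}_{\hmom}$. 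Setting $\Pot_{\grad}\vecv:=u$ delivers the required linear continuous potential operator; no boundary condition needs to be tracked here, which is why this case is considerably simpler than the first.
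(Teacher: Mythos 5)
Your proof is correct, and it is a genuinely different route from what the paper relies on. The paper does not actually prove Lemma~\ref{napotlem} in-text; it defers to \cite[Corollary~4.7(b)]{costabelmcintoshgenbogovskii} and \cite[Corollary~5.4]{hiptmairlizouuniext}, where one builds explicit Bogovski\u{\i}-type potential operators through regularized singular-integral representations and universal extension operators --- a single machine that then delivers Lemmas~\ref{napotlem}, \ref{rotpotlem}, and \ref{divpotlem} (and their analogues in higher dimensions and degrees) all at once. You instead start from the $\lt$-level Helmholtz decomposition of Lemma~\ref{genhelmlt}, which the paper already has independently via Weck's selection theorem, obtain a potential $u\in\hocom$ (resp. $u\in\hoom$), and bootstrap its regularity. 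In the Neumann case the bootstrap is immediate: $\grad u = \vecv$ controls every first derivative of $u$, so $\vecv\in\hmom$ gives $u\in\hmpoom$. In the Dirichlet case you first transfer to $\rt$ by zero extension (legitimate because $u\in\hocom$ implies $\grad\tilde u = \widetilde{\grad u}$, and $\vecv\in\hmcom$ implies $\tilde\vecv\in\H^m(\rt)$), conclude $\tilde u\in\H^{m+1}(\rt)$ supported in $\ovl\om$, and then appeal to the extension-by-zero characterization of $\hmpocom$ for bounded strong Lipschitz domains to land back in $\hmpocom$.

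What the two approaches buy: your argument is short, self-contained modulo one standard density theorem, and re-uses the paper's own functional-analytic machinery; but it is tied to a feature special to the gradient, namely that the whole Jacobian of the potential \emph{is} the given field $\vecv$, so regularity of $\vecv$ lifts to the potential for free. This feature is absent for $\rot$ and $\div$ --- $\rot w = \vecv$ or $\div w = u$ do not pin down the full derivative of $w$ --- so the same bootstrap is not available for Lemmas~\ref{rotpotlem} and \ref{divpotlem}, and the paper's choice to cite the potential-operator constructions gives a uniform treatment of all three. You should also be aware that the density result you lean on in the Dirichlet case --- that $\tilde u\in\H^{m+1}(\rt)$ with $\supp\tilde u\subset\ovl\om$ forces $u\in\hmpocom$ for Lipschitz $\om$ --- is itself a non-trivial theorem (Grisvard's Theorem~1.4.2.2, McLean's Theorem~3.29), even if it is a classical one; you flag this correctly as the only non-routine point.
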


So, for each $\vecv \in \hmcom\cap\rczom$, we have
$\vecv = \gradc\,u$ for the potential $u = \Pot_{\gradc} \vecv \in \hmpocom$ and, analogously,
for each $\vecv \in \hmom\cap\rom$, it holds
$\vecv = \grad u$ for the potential $u = \Pot_{\grad} \vecv \in \hmpoom$.
Note that the potential in $\hmpoom$ is uniquely determined only up to a constant.

For solenoidal vector fields in $\hmcom$ resp. $\hmom$ we have smooth potentials, too.

\begin{lem}
\mylabel{rotpotlem}
Let $\om$ be additionally topologically trivial and $m\in\nz$. Then 
$$\hmcom\cap\dczom=\rotc\,\hmpocom,\qquad
\hmom\cap\dzom=\rot\hmpoom$$
hold with linear and continuous potential operators $\Pot_{\rotc}$, $\Pot_{\rot}$.
\end{lem}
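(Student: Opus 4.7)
I plan to upgrade the $\ltom$-level potentials of Lemma \ref{poincaregradrotdiv}, specifically the topological isomorphisms in \eqref{topiso}, to $\hmpocom$- resp. $\hmpoom$-regularity, exactly parallel to how Lemma \ref{napotlem} treats the irrotational case. The ``$\supset$'' directions are immediate, since $\rot\hmpoom\subset\hmom\cap\dzom$ and $\rotc\hmpocom\subset\hmcom\cap\dczom$, so the work lies entirely in the reverse inclusions together with the continuity of the resulting potential operators.

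I would prove the second identity first. Given $\vecv\in\hmom\cap\dzom$, the $\ltom$-level potential $\vecw:=\rot^{-1}\vecv\in\rom\cap\dczom$ from \eqref{topiso} depends continuously on $\vecv$ with Poincar\'e constant $c_{\mathsf r}$, and solves the elliptic first-order Hodge--Neumann system
$$\rot\vecw=\vecv,\qquad\div\vecw=0\quad\text{in }\om,\qquad n\cdot\vecw=0\quad\text{on }\ga.$$
Standard elliptic regularity for this system on a bounded strong Lipschitz domain (by localisation with a partition of unity, boundary flattening, and iteration on $m$) upgrades $\vecw$ to $\hmpoom$ with a continuous bound, so that $\Pot_{\rot}:=\rot^{-1}$ is the required continuous potential operator.

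For the first identity I would reduce to the second by an extension-and-correction argument. Given $\vecv\in\hmcom\cap\dczom$, extension by zero to a ball $B\supset\ovl{\om}$ yields $\widetilde\vecv\in\mathring{\H}^{m}(B)$ with $\widetilde\vecv$ divergence-free on $B$, where the vanishing normal trace inherent in $\dczom$ is what guarantees that zero divergence survives across $\ga$. Applying the just-proved second identity on $B$ produces $\widetilde\vecw\in\H^{m+1}(B)$ with $\rot\widetilde\vecw=\widetilde\vecv$. Since $\rot\widetilde\vecw=0$ on $B\setminus\ovl{\om}$, and the assumed trivial topology of $\om$ implies trivial first de Rham cohomology of $B\setminus\ovl{\om}$ (by Alexander duality, $B\setminus\ovl{\om}$ is homotopy-equivalent to $\p B$), Lemma \ref{napotlem} on that set yields $\chi\in\H^{m+2}(B\setminus\ovl{\om})$ with $\grad\chi=\widetilde\vecw$ there. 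A bounded Stein extension $\widetilde\chi\in\H^{m+2}(B)$ then makes $\vecw:=\widetilde\vecw-\grad\widetilde\chi$ vanish on $B\setminus\ovl{\om}$; restricted to $\om$, this $\vecw$ lies in $\hmpocom$ (being zero in a neighbourhood of $\ga$ from the outside) and still satisfies $\rot\vecw=\vecv$, and $\Pot_{\rotc}$ is obtained as the composition of these continuous steps.

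The main obstacle is the elliptic regularity step for the Hodge--Neumann system on a merely strong Lipschitz (not smooth or convex) $\om$: the sharp $\hmpoom$-bound in this generality is nontrivial and requires either Mitrea--Mitrea-type regularity for the Hodge Laplacian with Neumann boundary data, or, more robustly, an explicit Bogovskii-/Costabel--McIntosh-type homotopy operator that delivers a continuous right inverse in one step and thereby bypasses boundary elliptic regularity entirely.
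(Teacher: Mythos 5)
The central step of your plan fails on bounded strong Lipschitz domains. You propose to upgrade the $\ltom$-potential $\vecw=\rot^{-1}\vecv$, which solves the Hodge--Neumann system $\rot\vecw=\vecv$, $\div\vecw=0$, $n\cdot\vecw=0$, to $\H^{m+1}(\om)$ by localisation, boundary flattening, and iteration in $m$. On a merely strong Lipschitz boundary this boot-strap does not go through: regularity for the Hodge Laplacian with Neumann (or Dirichlet) conditions saturates at a low order (roughly $\H^{3/2}$) because of edge and corner singularities, and no amount of regularity of the data recovers $\H^{m+1}(\om)$ for arbitrary $m$. The Mitrea--Mitrea results you invoke give sharp but bounded Sobolev/Besov regularity, not the unbounded ladder you need. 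You flagged this obstacle yourself, which is the right instinct, but as written the main proof vehicle collapses, and since your proof of the first identity is a reduction to the second via extension to a ball, both halves go down with it.

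The paper does not prove the lemma by regularity at all. It simply cites the generalized Bogovski\u{i} operators of Costabel--McIntosh and, alternatively, the lifting lemma of Hiptmair--Li--Zou. Those constructions produce the $\hmpocom$- resp. $\hmpoom$-potential directly via an explicit, averaged Poincar\'e/Cartan homotopy formula: the right inverse is an integral operator that is continuous $\H^m\to\H^{m+1}$ by Calder\'on--Zygmund-type kernel estimates, and it sidesteps boundary elliptic regularity entirely. You actually name this as a ``more robust'' alternative in your last sentence; in effect you identified the correct resolution but did not pursue it, and the pursued route is the one that does not work. The extension-and-correction reduction for the first identity is a sound idea (extension by zero of an $\hmcom$-field preserves divergence-freeness across $\ga$, and the homotopy-equivalence of $B\setminus\ovl\om$ to a simply connected set follows from the assumed triviality of $\om$'s topology), but it only pays off once you have a working proof of the unconstrained case, which requires the Bogovski\u{i}/Costabel--McIntosh machinery rather than regularity.
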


For a proof see, e.g., \cite[Corollary 4.7]{costabelmcintoshgenbogovskii}
or with slight modifications the generalized lifting lemma 
\cite[Corollary 5.4]{hiptmairlizouuniext} for the case $d=3$, $k=m$, $l=2$.
Moreover, the potential in $\hmpocom$ resp. $\hmpoom$ is no longer uniquely determined.

For the divergence operator we have the following result.

\begin{lem}
\mylabel{divpotlem}
Let $m\in\nz$. Then 
$$\hmcom\cap\ltzom=\divc\,\hmpocom,\qquad
\hmom=\div\hmpoom$$
hold with linear and continuous potential operators $\Pot_{\divc}$, $\Pot_{\div}$.
\end{lem}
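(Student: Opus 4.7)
The plan is to derive both identities from a Bogovskii-type right inverse of the divergence on the bounded strong Lipschitz domain $\om$. The inclusions $\supset$ are easy in both cases: the full divergence is continuous from $\H^{m+1}$ into $\H^{m}$, and for $v\in\hmpocom$ one has $\int_{\om}\div v = 0$ by integration by parts against the constant function $1$ (the boundary terms vanish due to the built-in trace condition), hence $\divc\,\hmpocom\subset\hmcom\cap\ltzom$ continuously. The analogous inclusion $\div\,\hmpoom\subset\hmom$ is even more immediate.

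For the first identity, the nontrivial inclusion $\hmcom\cap\ltzom\subset\divc\,\hmpocom$ is the classical Bogovskii theorem on bounded strong Lipschitz domains. The construction I would invoke proceeds first on a star-shaped Lipschitz subdomain by an explicit singular integral kernel whose Calder\'on--Zygmund analysis yields the required $\H^{m+1}$-estimate from $\H^{m}$-data, and is then glued to the whole of $\om$ by a partition of unity subordinate to a finite cover of $\om$ by star-shaped Lipschitz subdomains (such a cover exists since $\om$ is strong Lipschitz). This delivers the linear continuous potential operator
$$\Pot_{\divc}\colon\hmcom\cap\ltzom\longrightarrow\hmpocom,\qquad \divc\Pot_{\divc}f=f,$$
as recorded, e.g., in \cite[Theorem 5.2]{costabelmcintoshgenbogovskii}.

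For the second identity, I would reduce to the first by an extension argument. Pick a ball $B$ with $\overline{\om}\subset B$, a continuous extension operator $E\colon\hmom\to\H^{m}(\rt)$ (available for strong Lipschitz $\om$), a smooth cutoff $\chi$ supported in $B$ and identically $1$ on a neighborhood of $\overline{\om}$, and a smooth bump $\psi$ supported in $B\setminus\overline{\om}$ with $\int_{B}\psi=1$. For $f\in\hmom$, set $c_{f}:=\int_{B}\chi E f$ and $g:=\chi E f - c_{f}\psi$, so that $g$ lies in $\mathring{\H}^{m}(B)$ with vanishing mean on $B$. Applying the Bogovskii operator on $B$ yields $\tilde v\in\mathring{\H}^{m+1}(B)$ with $\div\tilde v=g$ on $B$; restricting to $\om$ gives $\Pot_{\div}f:=\tilde v|_{\om}\in\hmpoom$ with $\div\Pot_{\div}f=f$ on $\om$ (since $\chi\equiv 1$ near $\overline{\om}$ and $\psi$ vanishes there), and the construction is plainly linear and continuous. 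The main obstacle is the Bogovskii estimate itself, which rests on Calder\'on--Zygmund theory for a specific singular integral kernel; this is classical for strong Lipschitz domains and is therefore quoted rather than reproved.
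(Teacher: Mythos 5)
The paper does not prove this lemma itself; it only cites \cite{costabelmcintoshgenbogovskii} and \cite{hiptmairlizouuniext} (the Bogovskii theory for de Rham complexes, here in the case $d=3$, $k=m$, $l=3$) together with the classical Bogovskii references in Remark~\ref{bogovskiirem}. Your sketch correctly reproduces the underlying argument and is mathematically sound. The two easy inclusions are right, including the observation that $\int_\om\div v=0$ for $v\in\hmpocom$ (by approximation with $\cicom$ and continuity). The nontrivial inclusion $\hmcom\cap\ltzom\subset\divc\,\hmpocom$ is indeed the Bogovskii theorem on bounded strong Lipschitz domains (local construction on star-shaped pieces with a Calder\'on--Zygmund estimate, glued by a partition of unity), which yields a linear continuous right inverse into $\hmpocom$. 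For the boundary-free surjectivity $\hmom=\div\,\hmpoom$, your extension--cutoff device onto a containing ball $B$ is a clean reduction: $\chi Ef-c_f\psi$ has compact support in $B$ and vanishing mean, so it lies in $\mathring{\H}^m(B)\cap\lsymbol^2_0(B)$; the Bogovskii operator on the ball (star-shaped, so no covering is needed there) gives a potential in $\mathring{\H}^{m+1}(B)$, and restriction to $\om$ yields a potential in $\hmpoom$ that reproduces $f$ because $\chi\equiv1$ near $\overline\om$ and $\psi$ vanishes there. All steps are manifestly linear and continuous in $f$, so $\Pot_{\div}$ has the stated properties. This writes out, in essentially self-contained form, what the paper leaves to the cited literature.
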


Again, the potential in $\hmpocom$ resp. $\hmpoom$ is no longer uniquely determined.
Also Lemma \ref{napotlem} resp. Lemma \ref{divpotlem}
has been proved in \cite[Corollary 4.7(b)]{costabelmcintoshgenbogovskii}
and in \cite[Corollary 5.4]{hiptmairlizouuniext} for the case 
$d=3$, $k=m$, $l=1$ resp. $d=3$, $k=m$, $l=3$.

\begin{rem}
\mylabel{bogovskiirem}
Lemma \ref{divpotlem}, which shows a classical result on the solvability
and on the properties of the solution operator of the divergence equation,
is an important tool in fluid dynamics, i.e., in the theory
of Stokes or Navier-Stokes equations.
The potential operator is often called Bogovski\u{i} operator, 
see \cite{bogovskii1979,bogovskii1980} for the original works
and also \cite[p. 179, Theorem III.3.3]{galdibook1}, \cite[Lemma 2.1.1]{sohrbook}.
Moreover, there are also versions of Lemma \ref{napotlem} and Lemma \ref{rotpotlem},
if $\om$ is not topologically trivial, which we will not need in the paper at hand.
\end{rem}

\begin{rem}
\mylabel{narotpotlemrem}
A closer inspection of Lemma \ref{napotlem} and Lemma \ref{rotpotlem} and their proofs shows,
that these results extend to general topologies as well. More precisely we have:
\begin{itemize}
\item[\bf(i)]
It holds
\begin{align*}
\hmcom\cap\gradc\,\hocom=\hmcom\cap\rczom\cap\harmdom^{\bot}&=\gradc\,\hmpocom,\\
\hmom\cap\grad\hoom=\hmom\cap\rzom\cap\harmnom^{\bot}&=\grad\hmpoom
\end{align*}
with linear and continuous potential operators $\Pot_{\gradc}$, $\Pot_{\grad}$. 
\item[\bf(ii)]
It holds
\begin{align*}
\hmcom\cap\rotc\,\rcom=\hmcom\cap\dczom\cap\harmnom^{\bot}&=\rotc\,\hmpocom,\\
\hmom\cap\rot\rom=\hmom\cap\dzom\cap\harmdom^{\bot}&=\rot\hmpoom
\end{align*}
with linear and continuous potential operators $\Pot_{\rotc}$, $\Pot_{\rot}$.
\end{itemize}
\end{rem}

Using the latter three results and Lemma \ref{Banachadjoint}, 
irrotational and solenoidal vector fields in $\hmmom$ can be characterized.

\begin{cor}
\mylabel{nahmorangecor}
Let $\om$ be additionally topologically trivial and $m\in\n$. Then 
$$\rhmmzom=\grad\hmmpoom=\grad\big(\hmmocom\cap\ltzom\big)^{\prime}$$
is closed in $\hmmom$ with continuous inverse, i.e., 
$\grad^{-1}\in BL\big(\rhmmzom,(\hmmocom\cap\ltzom)^{\prime}\big)$.
Especially for $m=1$, 
$$\rhmozom=\grad\ltom=\grad\ltzom$$
is closed in $\hmoom$ with continuous inverse
$\grad^{-1}\in BL\big(\rhmozom,\ltzom\big)$
and uniquely determined potential in $\ltzom$. Moreover,
$$\exists\,c_{{\mathsf g},-1}>0\quad
\forall\,u\in\ltzom\qquad
\normltom{u}\leq c_{{\mathsf g},-1}\normhmoom{\grad u}\leq\sqrt{3}\,c_{{\mathsf g},-1}\normltom{u}$$
and the inf-sup-condition
\begin{align*}
0<\frac{1}{c_{{\mathsf g},-1}}
=\inf_{0\neq u\in\ltzom}
\frac{\norm{\grad u}_{\hmoom}}{\normltom{u}}
=\inf_{0\neq u\in\ltzom}\sup_{0\neq \vecv\in\hocom}
\frac{\scp{u}{\div\vecv}_{\ltom}}{\normltom{u}\norm{\Grad\vecv}_{\ltom}}.
\end{align*}
holds.
\end{cor}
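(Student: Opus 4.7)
The plan is to realize the claim as the Banach-space dual of a closed and exact complex in the $\mathring{\H}^{k}$-scale, and then extract the Poincar\'e estimate and inf-sup condition for $m=1$ from Lemma \ref{falem}. Specifically, I would consider the bounded primal complex
\begin{equation*}
\hmpocom \xrightarrow{\,\rotc\,} \hmcom \xrightarrow{\,\divc\,} \hmmocom\cap\ltzom.
\end{equation*}
Exactness at the middle node is precisely Lemma \ref{rotpotlem}: $\hmcom\cap\dczom=\rotc\,\hmpocom$, i.e.\ $N(\divc|_{\hmcom})=R(\rotc|_{\hmpocom})$. Surjectivity of the second arrow is Lemma \ref{divpotlem} applied with parameter $m-1$, giving $\hmmocom\cap\ltzom=\divc\,\hmcom$; for the base case $m=1$ this is the classical Bogovski\u{\i} result. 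Hence the complex is closed and exact.

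Lemma \ref{Banachadjoint} then yields the closed and exact dual complex
\begin{equation*}
(\hmpocom)^{\prime} \xleftarrow{(\rotc)^{\prime}} \hmmom \xleftarrow{(\divc)^{\prime}} (\hmmocom\cap\ltzom)^{\prime}
\end{equation*}
together with the continuity $((\divc)^{\prime})^{-1}\in BL\bigl(R((\divc)^{\prime}),(\hmmocom\cap\ltzom)^{\prime}\bigr)$. Distributional integration by parts identifies $(\rotc)^{\prime}=\rot$ and $(\divc)^{\prime}=-\grad$ on the negative-order duals (indeed this is how $\rot$ and $\grad$ are defined there). Exactness of the dual complex thus reads
\begin{equation*}
\rhmmzom=N\bigl((\rotc)^{\prime}\bigr)=R\bigl((\divc)^{\prime}\bigr)=\grad\,(\hmmocom\cap\ltzom)^{\prime},
\end{equation*}
which is the claimed set equality (under the notational identification $\hmmpoom=(\hmmocom\cap\ltzom)^{\prime}$), and continuity of $\grad^{-1}$ on $\rhmmzom$ is the rephrased bound on $((\divc)^{\prime})^{-1}$.

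For $m=1$ one has $\hmmocom\cap\ltzom=\ltzom$, which I identify with its own dual. Uniqueness of the potential follows because any $u\in\ltom$ with $\grad u=0$ is constant, so the mean-zero constraint forces $u=0$. The Poincar\'e estimate with best constant $c_{{\mathsf g},-1}$ is the continuity of $\grad^{-1}:\rhmozom\to\ltzom$ just obtained, while the opposite bound $\normhmoom{\grad u}\leq\sqrt{3}\,\normltom{u}$ reduces to $|\scp{u}{\div\vecv}_{\ltom}|\leq\sqrt{3}\,\normltom{u}\,\normltom{\Grad\vecv}$, which is Cauchy--Schwarz combined with $\div\vecv=\tr\Grad\vecv$. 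The inf-sup identity is then the Hilbert-space version \eqref{infsupgen} of Lemma \ref{falem} applied to $\Ao=\divc:\hocom\to\ltzom$.

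The only delicate point is notational bookkeeping: tracking the correct index shift when invoking Lemma \ref{divpotlem} (including the base case $m=1$), and consistently identifying the Banach adjoints $(\rotc)^{\prime}$ and $(\divc)^{\prime}$ with the distributional $\rot$ and $-\grad$. There is no new analysis beyond what is already packaged in the toolbox of Section \ref{fuanasec} and the smooth-potential results of Lemmas \ref{rotpotlem} and \ref{divpotlem}.
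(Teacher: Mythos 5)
Your proposal is correct and mirrors the paper's own argument: both apply Lemma \ref{Banachadjoint} to the bounded complex $\hmpocom\xrightarrow{\rotc}\hmcom\xrightarrow{\divc}\hmmocom$, with exactness from Lemma \ref{rotpotlem} and surjectivity (after the index shift $m\mapsto m-1$) from Lemma \ref{divpotlem}, then extract the $m=1$ estimate and inf-sup via Lemma \ref{falem}. Your treatment is, if anything, slightly more explicit about the index bookkeeping and the $\sqrt{3}$ bound than the paper's.
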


\begin{proof}
Let $\X_0:=\hmpocom$, $\X_1:=\hmcom$, $\X_2:=\hmmocom$ and 
$$\Az:=\rotc:\hmpocom\to\hmcom,\quad
\Ao:=-\divc:\hmcom\to\hmmocom.$$
These linear operators are bounded, 
$R(\Az)=\rotc\,\hmpocom=\hmcom\cap\dczom=N(\Ao)$ by Lemma \ref{rotpotlem}, 
and $R(\Ao)=\divc\hmcom=\hmmocom\cap\ltzom$ by Lemma \ref{divpotlem}. 
Therefore, $R(\Ao)$ is closed.
For the adjoint operators we get
$$\Aza=\rot=\rotc{}^{\prime}:\hmmom\to\hmmmoom,\quad
\Aoa=\grad=-\divc{}^{\prime}:\hmmpoom\to\hmmom$$
and obtain from Lemma \ref{Banachadjoint} that
$$\rhmmzom=N(\Aza)=R(\Aoa)=\grad\hmmpoom$$
is closed and
$$\grad^{-1}=(\Aoa)^{-1}\in 
BL\big(R(\Aoa),R(\Ao)^{\prime}\big) 
=BL\big(\rhmmzom,(\hmmocom\cap\ltzom)^{\prime}\big),$$
which completes the proof for general $m$.
If $m=1$, we get the assertions about the Friedrichs/Poincar\'e/ Ne\u{c}as inequality
and inf-sup-condition by Lemma \ref{falem}, i.e., \eqref{invcontmod} and \eqref{infsupgen}.
\end{proof}

\begin{cor}
\mylabel{rothmorangecor}
Let $\om$ be additionally topologically trivial and $m\in\n$. Then 
$$\dhmmzom=\rot\hmmpoom=\rot\big(\hmmocom\cap\dczom\big)^{\prime}$$
is closed in $\hmmom$ with continuous inverse, i.e.,
$\rot^{-1}\in BL\big(\dhmmzom,(\hmmocom\cap\dczom)^{\prime}\big)$.
Especially for $m=1$, 
$$\dhmozom=\rot\ltom=\rot\dczom$$
is closed in $\hmoom$ with continuous inverse
$\rot^{-1}\in BL\big(\dhmozom,\dczom\big)$
and uniquely determined potential in $\dczom$. Moreover,
$$\exists\,c_{{\mathsf r},-1}>0\quad
\forall\,\vecv\in\dczom\qquad
\normltom{\vecv}\leq c_{{\mathsf r},-1}\normhmoom{\rot \vecv}\leq\sqrt{2}\,c_{{\mathsf r},-1}\normltom{\vecv}$$
and the inf-sup-condition
\begin{align*}
0<\frac{1}{c_{{\mathsf r},-1}}
=\inf_{0\neq \vecv\in\dczom}
\frac{\norm{\rot \vecv}_{\hmoom}}{\normltom{\vecv}}
=\inf_{0\neq\vecv\in\dczom}\sup_{0\neq \vecw\in\hocom}
\frac{\scp{\vecv}{\rot\vecw}_{\ltom}}{\normltom{\vecv}\norm{\Grad\vecw}_{\ltom}}.
\end{align*}
holds.
\end{cor}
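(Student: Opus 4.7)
The plan is to mirror the proof of Corollary \ref{nahmorangecor} by applying the Banach space duality Lemma \ref{Banachadjoint} to the companion complex in which the roles of $\grad$, $\rot$, $\div$ are shifted by one position. More precisely, I would set $\X_{0}:=\hmpocom$, $\X_{1}:=\hmcom$, $\X_{2}:=\hmmocom$ and consider the bounded linear operators
$$\Az:=\gradc:\hmpocom\To\hmcom,\qquad
\Ao:=\rotc:\hmcom\To\hmmocom.$$
To invoke Lemma \ref{Banachadjoint} I need $R(\Az)=N(\Ao)$ together with $R(\Ao)$ closed in $\hmmocom$. The first identity is precisely the content of Lemma \ref{napotlem} (which uses the topological triviality of $\om$), giving $R(\Az)=\gradc\,\hmpocom=\hmcom\cap\rczom=N(\Ao)$. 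Closedness of $R(\Ao)$ follows from Lemma \ref{rotpotlem} applied at level $m-1$: $R(\Ao)=\rotc\,\hmcom=\hmmocom\cap\dczom$, which is clearly closed in $\hmmocom$.

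Next I would compute the Banach space adjoints. Since $\gradc{}^{\prime}=-\div$ and $\rotc{}^{\prime}=\rot$, we obtain
$$\Aza=-\div:\hmmom\To\hmmpoom,\qquad
\Aoa=\rot:\hmmpoom\To\hmmom.$$
By Lemma \ref{Banachadjoint}, $R(\Aoa)=N(\Aza)$ is closed in $\hmmom$. Reading this as $\dhmmzom=\rot\,\hmmpoom$ gives the first asserted identity and the closedness. The same lemma provides $(\Aoa)^{-1}=\rot^{-1}\in BL\bigl(R(\Aoa),R(\Ao)^{\prime}\bigr)=BL\bigl(\dhmmzom,(\hmmocom\cap\dczom)^{\prime}\bigr)$, which is the claimed continuous inverse.

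For the specialization $m=1$ I would use that $\hmmpoom=\ltom$ and that by Lemma \ref{genhelmlt} the topological triviality of $\om$ yields the Helmholtz decomposition $\ltom=\grad\hoom\oplus_{\ltom}\dczom$ together with $\harmnom=\{0\}$. Since $\rot\grad=0$ this decomposition gives $\rot\,\ltom=\rot\,\dczom$, which both confirms the second identity and shows that the potential in $\dczom$ is uniquely determined (the kernel of $\rot$ restricted to $\dczom$ is $\rzom\cap\dczom=\grad\hoom\cap\dczom=\{0\}$ by the same Helmholtz decomposition). Continuity of the inverse $\rot^{-1}\in BL(\dhmozom,\dczom)$ then follows, and with it the Friedrichs/Poincar\'e type estimate
$$\normltom{\vecv}\leq c_{\mathsf{r},-1}\normhmoom{\rot\vecv}\quad\forall\,\vecv\in\dczom,$$
the upper bound $\normhmoom{\rot\vecv}\leq\sqrt{2}\,\normltom{\vecv}$ being immediate from the definition of the $\hmoom$-norm and $\|\rot\vecv\|_{\hmoom}\le\|\vecv\|_{\ltom}\cdot\sup_{\phi}\|\rot\phi\|_{\ltom}/\|\phi\|_{\hocom}$.

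The inf-sup characterization of the best constant is then a direct application of Lemma \ref{falem}, specifically of \eqref{invcontmod}--\eqref{infsupgen} to $\Aoa=\rot$: the equivalence $\rot^{-1}\in BL(\dhmozom,\dczom)\Leftrightarrow c_{\mathsf{r},-1}<\infty$ yields the Friedrichs estimate, and \eqref{infsupgen} with $\H_{2}=\ltom$ and the identification $R(\Ao)=\dczom$ via $\Ao=\rotc:\hocom\to\ltom$ (i.e.~working on the level $m=1$ of the complex) produces exactly the displayed inf-sup condition, testing with $\vecw\in\hocom$ and pairing against $\vecv\in\dczom$. No step here is essentially new; the only point to be careful about is the correct bookkeeping of Sobolev indices and the identification $R(\Ao)^{\prime}\cong\dczom$ by Riesz, which must be invoked precisely to obtain the Hilbert space form \eqref{infsupgen} of the inf-sup condition rather than the generic form \eqref{infsupgenprime}.
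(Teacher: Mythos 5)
Your proposal reproduces the paper's proof essentially verbatim: same choice of $\X_{0}$, $\X_{1}$, $\X_{2}$, same operators $\Az=\gradc$, $\Ao=\rotc$, same appeals to Lemma \ref{napotlem} for $R(\Az)=N(\Ao)$, to Lemma \ref{rotpotlem} for closedness of $R(\Ao)$, to Lemma \ref{Banachadjoint} for $\dhmmzom=R(\Aoa)=\rot\hmmpoom$ with bounded inverse, and to Lemma \ref{falem} for the Friedrichs/Poincar\'e/Ne\v{c}as estimate and inf-sup condition at $m=1$. The extra remarks about the Helmholtz decomposition of $\ltom$, uniqueness of the potential in $\dczom$, and the $\sqrt{2}$ constant are correct elaborations of what the paper leaves implicit, but do not change the argument.
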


\begin{proof}
Let $\X_0:=\hmpocom$, $\X_1:=\hmcom$, $\X_2:=\hmmocom$ and 
$$\Az:=\gradc:\hmpocom\to\hmcom,\quad
\Ao:=\rotc:\hmcom\to\hmmocom.$$
These linear operators are bounded, $R(\Az)=\gradc\,\hmpocom=\hmcom\cap\rczom=N(\Ao)$ 
by Lemma \ref{napotlem}, and $R(\Ao)=\rot\hmcom=\hmmocom\cap\dczom$ by Lemma \ref{rotpotlem}. 
Therefore, $R(\Ao)$ is closed.
For the adjoint operators we get
$$\Aza=-\div=\gradc{}^{\prime}:\hmmom\to\hmmmoom,\quad
\Aoa=\rot=\rotc{}^{\prime}:\hmmpoom\to\hmmom$$
and obtain from Lemma \ref{Banachadjoint} that
$$\dhmmzom=N(\Aza)=R(\Aoa)=\rot\hmmpoom$$
is closed and
$$\rot^{-1}=(\Aoa)^{-1}\in
BL\big(R(\Aoa),R(\Ao)^{\prime}\big)
=BL\big(\dhmmzom,(\hmmocom\cap\dczom)^{\prime}\big),$$
which completes the proof for general $m$.
If $m=1$, we get the assertions about the Friedrichs/Poincar\'e/ Ne\u{c}as inequality
and inf-sup-condition by Lemma \ref{falem}, i.e., \eqref{invcontmod} and \eqref{infsupgen}.
\end{proof}

Let us present the corresponding result for the divergence as well.

\begin{cor}
\mylabel{divhmorangecor}
Let $\om$ be additionally topologically trivial and $m\in\n$. Then 
$$\hmmom=\div\hmmpoom=\div\big(\hmmocom\cap\rczom\big)^{\prime}$$
(is closed in $\hmmom$) with continuous inverse, i.e.,
$\div^{-1}\in BL\big(\hmmom,(\hmmocom\cap\rczom)^{\prime}\big)$.
Especially for $m=1$, 
$$\hmoom=\div\ltom=\div\rczom$$
(is closed in $\hmoom$) with continuous inverse
$\div^{-1}\in BL\big(\hmoom,\rczom\big)$
and uniquely determined potential in $\rczom$. Moreover,
$$\exists\,c_{{\mathsf d},-1}>0\quad
\forall\,\vecv\in\rczom\qquad
\normltom{\vecv}\leq c_{{\mathsf d},-1}\normhmoom{\div\vecv}\leq c_{{\mathsf d},-1}\,\normltom{\vecv}$$
and the inf-sup-condition
\begin{align*}
0<\frac{1}{c_{{\mathsf d},-1}}
=\inf_{0\neq \vecv\in\rczom}
\frac{\norm{\div\vecv}_{\hmoom}}{\normltom{\vecv}}
=\inf_{0\neq \vecv\in\dczom}\sup_{0\neq u\in\hocom}
\frac{\scp{\vecv}{\grad u}_{\ltom}}{\normltom{\vecv}\norm{\grad u}_{\ltom}}.
\end{align*}
holds.
\end{cor}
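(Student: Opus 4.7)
The plan is to apply Lemma \ref{Banachadjoint} to a short closed and exact Banach complex whose adjoint realizes $\div:\hmmpoom\to\hmmom$ as the nontrivial arrow, in complete analogy with the proofs of Corollaries \ref{nahmorangecor} and \ref{rothmorangecor}. Concretely, I would set $\X_{0}:=\{0\}$, $\X_{1}:=\hmcom$, $\X_{2}:=\hmmocom$ together with $\Az:=0$ and $\Ao:=-\gradc:\hmcom\to\hmmocom$; both maps are bounded, so the abstract closed-range machinery of Lemma \ref{Banachadjoint} is available.

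Next I would check that this complex is closed and exact. Injectivity of $\gradc$ on $\hmcom$ (a vanishing gradient combined with homogeneous trace forces the function to vanish) yields $N(\Ao)=\{0\}=R(\Az)$, so exactness holds at $\X_{1}$. Then Lemma \ref{napotlem}, which is exactly where the topological triviality of $\om$ enters, gives
$$R(\Ao)=\gradc\,\hmcom=\hmmocom\cap\rczom,$$
the kernel of $\rotc$ on $\hmmocom$, which is in particular closed in $\hmmocom$. This plays the same structural role here as Lemmas \ref{napotlem} and \ref{rotpotlem} do in the two preceding corollaries.

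Taking adjoints and testing against smooth compactly supported fields, $\Aza=0:\hmmom\to\{0\}$ and $\Aoa=\div:\hmmpoom\to\hmmom$ (the minus sign in the definition of $\Ao$ cancels the minus sign from the duality $\gradc^{\prime}=-\div$). Lemma \ref{Banachadjoint} then delivers
$$R(\div)=R(\Aoa)=N(\Aza)=\hmmom,$$
i.e.\ surjectivity of $\div$ onto the full space $\hmmom$, together with the continuity $\div^{-1}\in BL\bigl(\hmmom,(\hmmocom\cap\rczom)^{\prime}\bigr)$.

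For the case $m=1$ I would specialize: $\hmmocom=\ltom$, so $R(\Ao)=\ltom\cap\rczom=\rczom$ is already a closed subspace of $\ltom$, and the canonical Hilbert-space identification $\rczom^{\prime}\cong\rczom$ places the potential in $\rczom$ itself. Uniqueness of this potential follows from $\rczom\cap\dzom=\{0\}$, which holds on a topologically trivial domain by Lemma \ref{genhelmlt} (the Helmholtz decomposition $\ltom=\rczom\oplus_{\ltom}\rot\rom$ together with $\dzom=\rot\rom$ in that topology), so $\div$ is injective on $\rczom$. The upper bound $\normhmoom{\div\vecv}\leq\normltom{\vecv}$ is immediate from integration by parts and the definition of the $\hmoom$-norm, while the Friedrichs/Poincar\'e/Ne\u{c}as-type lower bound and the inf-sup characterization are Lemma \ref{falem} specialized to the Hilbert-space situation \eqref{infsupgen}. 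I expect no genuinely new obstacle beyond what already appears in the two preceding corollaries; the only twist is that the complex degenerates at the left end ($\Az=0$), which is the precise reflection of the fact that $\div$ is surjective onto all of $\hmmom$.
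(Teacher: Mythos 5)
Your proof is correct and follows essentially the same route as the paper: you set $\Ao:=-\gradc:\hmcom\to\hmmocom$, invoke Lemma \ref{napotlem} (where topological triviality is used) to get $R(\Ao)=\hmmocom\cap\rczom$ closed and $N(\Ao)=\{0\}$, dualize to obtain surjectivity and boundedness of $\div^{-1}$, and specialize via Lemma \ref{falem} for $m=1$. The only cosmetic difference is that you wrap the argument in Lemma \ref{Banachadjoint} with the degenerate choice $\X_0=\{0\}$, $\Az=0$, whereas the paper applies the closed range and bounded inverse theorems directly (and your explicit uniqueness argument $\rczom\cap\dzom=\{0\}$ is a correct, slightly more concrete restatement of what the abstract annihilator computation $N(\Aoa)=R(\Ao)^{\circ}=\{0\}$ already encodes).
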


\begin{proof}
Let $\X_1:=\hmcom$, $\X_2:=\hmmocom$ and $\Ao:=-\gradc:\hmcom\to\hmmocom$.
$\Ao$ is linear and bounded with $R(\Ao)=\grad\hmcom=\hmmocom\cap\rczom$ by Lemma \ref{napotlem}. 
Therefore, $R(\Ao)$ is closed. 
The adjoint is $\Aoa=\div=-\gradc{}^{\prime}:\hmmpoom\to\hmmom$
with closed range $R(\Aoa)=\div\hmmpoom$ by the closed range theorem.
Moreover, $N(\Ao)=\{0\}$. Hence $\Aoa$ is surjective as $\Ao$ is injective, i.e.,
$$\hmmom=N(\Ao)^{\circ}=R(\Aoa)=\div\hmmpoom.$$
As $\Ao$ is also surjective onto its range, $\Aoa=\div:\hmmpoom\to R(\Aoa)$ is bijective.
By the bounded inverse theorem we get
$$\div^{-1}=(\Aoa)^{-1}\in
BL\big(R(\Aoa),R(\Ao)^{\prime}\big)
=BL\big(\hmmom,(\hmmocom\cap\rczom)^{\prime}\big),$$
which completes the proof for general $m$.
If $m=1$, we get the assertions about the Friedrichs/Poincar\'e/ Ne\u{c}as inequality
and inf-sup-condition by Lemma \ref{falem}, i.e., \eqref{invcontmod} and \eqref{infsupgen}.
\end{proof}

\begin{rem} 
\label{complexgradrotdivhmc}
The results of the latter three lemmas and corollaries can be formulated equivalently in terms of complexes: 
Let $\om$ be additionally topologically trivial.
Then the sequence
$$\begin{CD}
\hmpocom @> \gradc >>
\hmcom @> \rotc >>
\hmmocom @> \divc >>
\hmmtcom 
\end{CD}$$
and thus also its dual or adjoint sequence
$$\begin{CD}
\hmmmoom @< -\div <<
\hmmom @< \rot << 
\hmmpoom @< -\grad <<
\hgen{}{-m+2}{}(\om)
\end{CD}$$
are closed and exact Banach complexes. 
\end{rem}

\section{The $\Gradgrad$- and $\divDiv$-Complexes}
\mylabel{kernelsec}

We will use the following standard notations from linear algebra. 
For vectors $a,b\in\rt$ and matrices $A,B\in\rttt$ 
the expressions $a\cdot b$ and $A:B$ denote the inner product of vectors 
and the Frobenius inner product of matrices, respectively. 
For a vector $a\in\rt$ with components $a_i$ for $i=1,2,3$  
the matrix $\spn a\in\rttt$ is defined by
$$\spn a 
=\begin{bmatrix}
0 & -a_3 & a_2 \\
a_3 & 0 & -a_1 \\
-a_{2} & a_1 & 0
\end{bmatrix}.$$ 
Observe that $(\spn a)\,b=a\times b$ for $a,b\in\rt$, where $a\times b$ denotes the exterior product of vectors.
The exterior product $a\times B$ of a vector $a\in\rt$ and a matrix $B\in\rttt$ 
is defined as the matrix which is obtained by applying the exterior product row-wise.
Note that $\spn$ is a bijective mapping from $\rt$ to the set of skew-symmetric matrices in $\rttt$ with the inverse mapping $\spn^{-1}$.
In addition to $\sym A$ and $\skw A$ for the symmetric part and the skew-symmetric part of a matrix $A$, 
we use $\dev A$ and $\tr A$ for denoting the deviatoric part and the trace of a matrix $A$.
Finally, the set of symmetric matrices in $\rttt$ is denoted by $\bbS$, 
the set of matrices in $\rttt$ with vanishing trace is denoted by $\bbT$.

We need several spaces of tensor fields. The spaces
$$\Cicom,\quad\Ltom,\quad\Hoom,\quad\Hocom,\quad\Dom,\quad\Dcom,\quad\Rczom,\quad\dots$$
are introduced for tensor fields as well, such that all rows belong to the corresponding spaces of vector fields 
$\cicom$, $\ltom$, $\hoom$, $\hocom$, $\dom$, $\dcom$, $\rczom$, $\dots$, respectively.
Additionally, we will need spaces allowing 
for a deviatoric gradient, a symmetric rotation, and a double divergence, i.e.,
\begin{align*}
\devGom
&:=\setb{\vecv\in\ltom}{\devGrad\vecv\in\Ltom},
&
\devGzom
&:=\setb{\vecv\in\ltom}{\devGrad\vecv=0},\\
\symRom
&:=\setb{\bfE\in\Ltom}{\symRot\bfE\in\Ltom},
&
\symRzom
&:=\setb{\bfE\in\Ltom}{\symRot\bfE=0},\\
\dDom
&:=\setb{\bfM\in\Ltom}{\divDiv\bfM\in\ltom},
&
\dDzom
&:=\setb{\bfM\in\Ltom}{\divDiv\bfM=0}.
\end{align*}
Moreover, we introduce various spaces of symmetric 
tensor fields without prescribed boundary conditions, i.e.,
\begin{align*}
\ltomS
:=\set{\bfM\in\Ltom}{\bfM^{\top}=\bfM},\quad
\dDomS
:=\dDom\cap\ltomS,\quad\dots,
\end{align*}
and with homogeneous boundary conditions as closures of symmetric test tensor fields, i.e.,
\begin{align*}
\HocomS
:=\ovl{\Cicom\cap\ltomS}^{\Hoom},\quad
\RcomS
:=\ovl{\Cicom\cap\ltomS}^{\Rom},\quad\dots,
\end{align*}
as well as spaces of tensor fields with vanishing trace 
and without prescribed boundary conditions, i.e.,
\begin{gather*}
\ltomT
:=\set{\bfE\in\Ltom}{\tr\bfE=0},\quad
\HoomT
:=\Hoom\cap\ltomT,\quad\dots,
\end{gather*}
and with homogeneous boundary conditions as closures of trace-free test tensor fields, i.e.,
\begin{align*}
\HocomT
:=\ovl{\Cicom\cap\ltomT}^{\Hoom},\quad
\DcomT
:=\ovl{\Cicom\cap\ltomT}^{\Dom},\quad\dots.
\end{align*}
We note
$$\HocomS=\sym\Hocom=\Hocom\cap\ltomS,\quad
\HocomT=\dev\Hocom=\Hocom\cap\ltomT,$$
but generally only
$$\RcomS\subset\Rcom\cap\ltomS,\quad
\DcomT\subset\Dcom\cap\ltomT,\quad\dots.$$
Let us also mention that  trivially
$$\devGrad\devGom\subset\ltomT,\quad
\symRot\symRom\subset\ltomS$$
hold. This can be seen as follows.
Pick $\vecv\in\devGom$ with $\bfE:=\devGrad\vecv$
and $\bfN\in\symRom$ with $\bfM:=\symRot\bfN$. 
Then for all $\varphi\in\cicom$ and $\bfPhi\in\Cicom$
\begin{align*}
\scpltom{\tr\bfE}{\varphi}
&=\scpLtom{\bfE}{\varphi\,\bfI}
=-\scpLtom{\vecv}{\Div\dev\varphi\,\bfI}
=0,\\
\scpLtom{\skw\bfM}{\bfPhi}
&=\scpLtom{\bfM}{\skw\bfPhi}
=\scpLtom{\bfN}{\Rot\sym\skw\bfPhi}
=0.
\end{align*}
Before we proceed we need a few technical lemmas.

\begin{lem}
\label{GraddevGrad}
For any distributional vector field $\vecv$ it holds for $i,j,k=1,\dots,3$
$$\p_{k}(\Grad\vecv)_{ij}
=\begin{cases}
\p_{k}(\devGrad\vecv)_{ij}
&\text{, if }i\neq j,\\
\p_{j}(\devGrad\vecv)_{ik}
&\text{, if }i\neq k,\\
\displaystyle\frac{3}{2}\p_{i}(\devGrad\vecv)_{ii}
+\frac{1}{2}\sum_{l\neq i}\p_{l}(\devGrad\vecv)_{li}
&\text{, if }i=j=k.
\end{cases}$$
\end{lem}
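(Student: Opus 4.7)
The plan is to verify the three cases directly from the definition
$\devGrad v = \Grad v - \tfrac{1}{3}(\div v)\,\bfI$, i.e.,
$(\devGrad v)_{ij} = \partial_j v_i - \tfrac{1}{3}\delta_{ij}\div v$, together with
the commutativity of partial derivatives in the distributional sense. Note that off-diagonal entries of $\Grad v$ and $\devGrad v$ coincide, and only the diagonal entries carry the $-\tfrac{1}{3}\div v$ correction.

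First I would dispatch the case $i\neq j$: here $(\devGrad v)_{ij}=(\Grad v)_{ij}$ trivially, so $\partial_k(\Grad v)_{ij}=\partial_k(\devGrad v)_{ij}$ is immediate. For the second case $i\neq k$, I would use that partial derivatives commute, writing
$\partial_k(\Grad v)_{ij}=\partial_k\partial_j v_i=\partial_j\partial_k v_i=\partial_j(\Grad v)_{ik}$, and then observe that $i\neq k$ forces $(\Grad v)_{ik}=(\devGrad v)_{ik}$, giving the desired identity.

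The one substantive case is $i=j=k$, which is where the factor $\tfrac{3}{2}$ and the correction sum enter. I would compute both sides separately: the left-hand side is simply $\partial_i^2 v_i$. For the right-hand side, using $(\devGrad v)_{ii}=\partial_i v_i-\tfrac{1}{3}\div v$ and, for $l\neq i$, $(\devGrad v)_{li}=\partial_i v_l$, I would expand
\begin{align*}
\tfrac{3}{2}\partial_i(\devGrad v)_{ii}+\tfrac{1}{2}\sum_{l\neq i}\partial_l(\devGrad v)_{li}
&=\tfrac{3}{2}\partial_i^2 v_i-\tfrac{1}{2}\partial_i\div v+\tfrac{1}{2}\sum_{l\neq i}\partial_i\partial_l v_l.
\end{align*}
Then substituting $\partial_i\div v=\partial_i^2 v_i+\sum_{l\neq i}\partial_i\partial_l v_l$ collapses the last two terms to $-\tfrac{1}{2}\partial_i^2 v_i$, yielding $\partial_i^2 v_i$ as required.

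The whole argument is elementary and requires no genuine obstacle beyond the little algebraic cancellation in the diagonal case; the only thing to be careful about is keeping the indices honest, since in the second case $k$ and $j$ play asymmetric roles (we transfer the outer derivative from $\partial_k$ to $\partial_j$), and in the third case the coefficient $\tfrac{3}{2}$ must be matched exactly against the $\tfrac{1}{3}$ coming from $\dev$. Since everything is a distributional identity, no regularity hypothesis on $v$ beyond the implicit assumption that all expressions make sense as distributions is needed.
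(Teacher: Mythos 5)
Your proposal is correct and follows essentially the same route as the paper: dispatch $i\neq j$ by noting off-diagonal entries of $\Grad\vecv$ and $\devGrad\vecv$ agree, use commutativity of derivatives for $i\neq k$, and isolate $\p_i^2\vecv_i$ in the diagonal case via the $\tfrac13\div\vecv$ correction. The only cosmetic difference is direction of the algebra in case $i=j=k$ (you verify the claimed right-hand side collapses to $\p_i^2\vecv_i$, the paper derives the formula by solving for $\p_i^2\phi_i$) and that the paper first treats test fields and then invokes density, whereas you work directly with distributional derivatives; both are fine.
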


\begin{proof}
Let $\phi\in\cic(\rt)$ be a vector field. We want to express the second derivatives of $\phi$
by the derivatives of the deviatoric part of the Jacobian, i.e., of $\devGrad\phi$.
Recall that we have $\dev\bfE=\bfE-\frac{1}{3}(\tr\bfE)\,\bfI$ for a tensor $\bfE$.
Hence $\devGrad\phi$ coincides with $\Grad\phi$ outside the diagonal entries, i.e., we observe
$(\Grad\phi)_{ij}=(\devGrad\phi)_{ij}$ for $i\neq j$. Hence, looking at second derivatives,
we see immediately 
\begin{align*}
\p_{k}\p_{j}\phi_{i}
&=\p_{k}(\Grad\phi)_{ij}=\p_{k}(\devGrad\phi)_{ij}
&&\text{for }i\neq j,\\
\p_{k}\p_{j}\phi_{i}
&=\p_{j}\p_{k}\phi_{i}=\p_{j}(\Grad\phi)_{ik}=\p_{j}(\devGrad\phi)_{ik}
&&\text{for }i\neq k.
\end{align*}
Thus it remains to represent $\p_{i}^2\phi_{i}$ by the derivatives of $\devGrad\phi$. By
\begin{align*}
\p_{i}^2\phi_{i}
=\p_{i}(\Grad\phi)_{ii}
=\p_{i}(\devGrad\phi)_{ii}+\frac{1}{3}\p_{i}\div\phi
\end{align*}
we get
\begin{align*}
\frac{2}{3}\p_{i}^2\phi_{i}
=\p_{i}(\devGrad\phi)_{ii}+\frac{1}{3}\sum_{l\neq i}\p_{i}\p_{l}\phi_{l}
=\p_{i}(\devGrad\phi)_{ii}+\frac{1}{3}\sum_{l\neq i}\p_{l}(\devGrad\phi)_{li},
\end{align*}
yielding the stated result for test vector fields.
Testing extends the formulas to distributions,
which finishes the proof.
\end{proof}

We note that the latter trick is similar to the well known fact
that second derivatives of a vector field can always be written 
as derivatives of the symmetric gradient of the vector field,
leading by Ne\u{c}as estimate to Korn's second and first inequalities.
We will now do the same for the operator $\devGrad$.

\begin{lem}
\label{kerneldevGrad}
It holds:
\begin{itemize}
\item[\bf(i)]
There exists $c>0$, such that for all vector fields $\vecv\in\hoom$
$$\normltom{\Grad\vecv}
\leq c\,\big(\normltom{\vecv}+\normltom{\devGrad\vecv}\big).$$
\item[\bf(ii)]
$\devGom=\hoom$.
\item[\bf(iii)]
For $\devGrad:\devGom\subset\ltom\To\ltomT$ it holds 
$$D(\devGrad)=\devGom=\hoom,$$
and the kernel of $\devGrad$ equals the space of (global) shape functions 
of the lowest order Raviart-Thomas elements, i.e.,
$$N(\devGrad)
=\devGzom
=\RTz 
:=\set{P}{P(x)=a\,x+b,\ a\in\reals,\ b\in\rt},$$
which dimension is $\dim\RTz=4$.
\item[\bf(iv)]
There exists $c>0$, such that for all vector fields $\vecv\in\hoom\cap\RTz^{\bot_{\ltom}}$
$$\normhoom{\vecv}
\leq c\,\normltom{\devGrad\vecv}.$$
\end{itemize}
\end{lem}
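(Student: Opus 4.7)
The plan is to establish (i) and (ii) simultaneously from Lemma \ref{GraddevGrad} via a Ne\u{c}as-style argument, then derive (iii) by a direct pointwise calculation, and finally obtain (iv) through the standard indirect compactness argument based on Rellich's selection theorem.

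For part (i), I would invoke Ne\u{c}as's inequality
$$\normltom{w}\leq c\,\big(\normhmoom{w}+\normhmoom{\grad w}\big)$$
applied component-wise to the entries of $\Grad\vecv$. By Lemma \ref{GraddevGrad}, every distributional derivative $\p_{k}(\Grad\vecv)_{ij}$ is a fixed linear combination of first-order partial derivatives of entries of $\devGrad\vecv$, so whenever $\devGrad\vecv\in\Ltom$, each $\p_{k}(\Grad\vecv)_{ij}$ lies in $\hmoom$ with norm controlled by $\normltom{\devGrad\vecv}$. Combined with the trivial bound $\norm{(\Grad\vecv)_{ij}}_{\hmoom}=\norm{\p_{j}\vecv_{i}}_{\hmoom}\leq\normltom{\vecv}$, Ne\u{c}as's inequality yields the estimate claimed in (i). The very same reasoning proves (ii): the inclusion $\hoom\subset\devGom$ is immediate, while conversely any $\vecv\in\devGom$ satisfies the hypotheses of the Ne\u{c}as argument just given, hence $\Grad\vecv\in\Ltom$ and $\vecv\in\hoom$.

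Part (iii) reduces to a pointwise algebraic analysis. The equation $\devGrad\vecv=0$ is equivalent to $\Grad\vecv=\tfrac{1}{3}(\div\vecv)\,\bfI$, so all off-diagonal entries of $\Grad\vecv$ vanish while the three diagonal entries coincide. The off-diagonal condition forces $\vecv_{i}$ to depend only on $x_{i}$; the equality of the three diagonal derivatives $\p_{i}\vecv_{i}$ (each depending only on $x_{i}$) then forces them all to be one and the same constant $a\in\reals$, yielding $\vecv(x)=a\,x+b$ with $b\in\rt$. The reverse inclusion is trivial, and $\dim\RTz=1+3=4$.

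For part (iv), I would argue by contradiction. If the inequality fails, there exists a sequence $(\vecv_{n})\subset\hoom\cap\RTz^{\bot_{\ltom}}$ with $\normhoom{\vecv_{n}}=1$ and $\normltom{\devGrad\vecv_{n}}\to 0$. By Rellich's selection theorem, a subsequence converges in $\ltom$ to some $\vecv$; since $\RTz$ is finite dimensional by (iii), the complement $\RTz^{\bot_{\ltom}}$ is $\ltom$-closed, so $\vecv\in\RTz^{\bot_{\ltom}}$, while passing to distributional limits gives $\devGrad\vecv=0$, whence $\vecv\in\RTz\cap\RTz^{\bot_{\ltom}}=\{0\}$. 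The estimate from part (i) applied to $\vecv_{n}$ then forces $\normhoom{\vecv_{n}}\to 0$, contradicting $\normhoom{\vecv_{n}}=1$. The only delicate point in the whole argument is the Ne\u{c}as application in (i); everything else is routine once Lemma \ref{GraddevGrad} is in hand.
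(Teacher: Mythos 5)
Your parts (i), (iii), and (iv) track the paper's proof closely, but part (ii) contains a circularity that the paper explicitly avoids. The Ne\u{c}as estimate as you invoked it in (i) — and as the paper states it — is the norm equivalence $c\,\normltom{w}\leq\normhmoom{w}+\normhmoom{\grad w}\leq(\sqrt{3}+1)\normltom{w}$ \emph{for $w\in\ltom$}. To prove (ii) by "the Ne\u{c}as argument just given," you would apply this to $w=(\Grad\vecv)_{ij}$, but for $\vecv\in\devGom$ you only know a priori that $(\Grad\vecv)_{ij}\in\hmoom$ and $\p_{k}(\Grad\vecv)_{ij}\in\hmoom$; you do not yet know $(\Grad\vecv)_{ij}\in\ltom$, which is precisely what you are trying to prove. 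What you actually need is the regularity form of the Ne\u{c}as/Lions lemma (that a distribution in $\hmoom$ all of whose first derivatives lie in $\hmoom$ is in $\ltom$), which is a genuinely stronger statement than the inequality alone and which you neither cite nor derive. The paper sidesteps this by a density argument: the segment property of $\om$ together with mollification yields that $\hoom$ is dense in $\devGom$ under the graph norm, and then estimate (i) shows that any $\devGom$-Cauchy sequence from $\hoom$ is Cauchy in $\hoom$, hence converges there. You should replace your direct invocation by either this density argument or an explicit appeal to the full Ne\u{c}as/Lions regularity lemma.

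A smaller remark on (iii): the paper derives $\vecv(x)=Ax+b$ by once more invoking Lemma \ref{GraddevGrad} to conclude $\p_{k}\Grad\vecv=0$ for all $k$ in the distributional sense, which is clean and requires no pointwise reasoning. Your argument ("$\vecv_{i}$ depends only on $x_{i}$, and a function of $x_{1}$ equal to a function of $x_{2}$ must be constant") reaches the same conclusion but is informal for weakly differentiable fields on a general Lipschitz domain and quietly uses connectedness; it would need to be made rigorous, and the paper's route via Lemma \ref{GraddevGrad} is both shorter and already distributionally valid. Part (iv) is essentially identical to the paper.
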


\begin{proof}
Let $\vecv\in\hoom$. By the latter lemma and Ne\u{c}as estimate, i.e., 
$$\exists\,c>0\quad\forall\,u\in\ltom\qquad
c\,\normltom{u}\leq\normhmoom{\grad u}+\normhmoom{u}\leq(\sqrt{3}+1)\normltom{u},$$
we get 
\begin{align*}
\norm{\Grad\vecv}_{\Ltom}
&\leq c\,\big(\sum_{k=1}^{3}\norm{\p_{k}\Grad\vecv}_{\Hmoom}+\norm{\Grad\vecv}_{\Hmoom}\big)\\
&\leq c\,\big(\sum_{k=1}^{3}\norm{\p_{k}\devGrad\vecv}_{\Hmoom}+\norm{\Grad\vecv}_{\Hmoom}\big)\\
&\leq c\,\big(\norm{\devGrad\vecv}_{\Ltom}+\norm{\vecv}_{\Ltom}\big),
\end{align*}
which shows (i). As $\om$ has the segment property and by standard mollification
we obtain that restrictions of $\cic(\rt)$-vector fields are dense in $\devGom$.
Especially $\hoom$ is dense in $\devGom$.
Let $\vecv\in\devGom$ and $(\vecv_{n})\subset\hoom$ with $\vecv_{n}\to \vecv$ in $\devGom$. 
By (i) $(\vecv_{n})$ is a Cauchy sequence in $\hoom$ converging to $\vecv$ in $\hoom$,
which proves $\vecv\in\hoom$ and hence (ii).
For $P\in\RTz$ it holds $\devGrad P=a\dev\bfI=0$.
Let $\devGrad\vecv=0$ for some vector field $\vecv\in\devGom=\hoom$. 
By Lemma \ref{GraddevGrad} we get $\p_{k}\Grad\vecv=0$ for all $k=1,\dots,3$,
and therefore $\vecv(x) = A \, x + b$ for some matrix $A \in \rttt$ and vector $b \in \rt$. 
Then $0=\devGrad\vecv=\dev A$, if and only if $A=\frac{1}{3}(\tr A)\,\bfI$,
which shows (iii). If (iv) was wrong, 
there exists a sequence $(\vecv_{n})\subset\hoom\cap\RTz^{\bot_{\ltom}}$
with $\normhoom{\vecv_{n}}=1$ and $\devGrad\vecv_{n}\to0$. As $(\vecv_{n})$ is bounded in $\hoom$,
by Rellich's selection theorem there exists a subsequence, again denoted by $(\vecv_{n})$,
and some $\vecv\in\ltom$ with $\vecv_{n}\to \vecv$ in $\ltom$. By (i), $(\vecv_{n})$ is a Cauchy sequence in $\hoom$.
Hence $\vecv_{n}\to \vecv$ in $\hoom$ and $\vecv\in\hoom\cap\RTz^{\bot_{\ltom}}$.
As $0\ot\devGrad\vecv_{n}\to\devGrad\vecv$, we have by (iii) 
$\vecv\in\RTz\cap\RTz^{\bot_{\ltom}}=\{0\}$,
a contradiction to $1=\normhoom{\vecv_{n}}\to0$. 
The proof is complete.
\end{proof}

We recall the following well-known result for the spaces
\begin{align*}
\ggom&:=\set{u\in\ltom}{\Gradgrad u\in\Ltom},&
\ggcom&:=\ovl{\cicom}^{\ggom}.
\end{align*}

\begin{lem} 
\label{formulasone}
It holds $\ggcom=\htcom$ and $\ggczom=\{0\}$,
and there exists $c>0$ such that for all $u\in\htcom$
$$\norm{u}_{\htom}\leq c\,\normltom{\Gradgrad u} = c\,\normltom{\Delta u},\qquad
c\leq\sqrt{1+c_{\mathsf g}^2(1+c_{\mathsf g}^2)}\leq1+c_{\mathsf g}^2.$$
\end{lem}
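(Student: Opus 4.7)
The plan is to prove all three conclusions as consequences of a single Poincaré-type estimate established on test functions in $\cicom$, then extend by density.

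\textbf{Step 1: The Poincaré-type estimate on $\cicom$.} For $u\in\cicom$ the scalar Poincaré inequality from Lemma \ref{poincaregradrotdiv} applies to $u\in\hocom$, giving $\normltom{u}\leq c_{\mathsf g}\normltom{\grad u}$. Since each component of $\grad u$ again lies in $\cicom\subset\hocom$, a second application componentwise yields $\normltom{\grad u}\leq c_{\mathsf g}\normltom{\Grad\grad u}=c_{\mathsf g}\normltom{\Gradgrad u}$. Combining,
$$\norm{u}_{\htom}^2=\normltom{u}^2+\normltom{\grad u}^2+\normltom{\Gradgrad u}^2\leq\bigl(1+c_{\mathsf g}^2+c_{\mathsf g}^4\bigr)\normltom{\Gradgrad u}^2,$$
so the asserted constant $c=\sqrt{1+c_{\mathsf g}^2(1+c_{\mathsf g}^2)}\leq 1+c_{\mathsf g}^2$ appears naturally.

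\textbf{Step 2: The identity $\normltom{\Gradgrad u}=\normltom{\Delta u}$.} For $u\in\cicom$, integrate by parts twice to shift derivatives across the $L^2$-pairing:
$$\int_{\om}(\p_i\p_j u)^2=-\int_{\om}\p_j u\,\p_i(\p_i\p_j u)=\int_{\om}\p_i^2 u\,\p_j^2 u.$$
Summing over $i,j=1,2,3$ produces $\normltom{\Gradgrad u}^2=\normltom{\Delta u}^2$. Boundary terms vanish because $u\in\cicom$.

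\textbf{Step 3: Density extension and consequences.} By the definitions, $\cicom$ is dense in both $\htcom$ and $\ggcom$. The estimate in Step 1 shows that the $\ggom$-graph norm and the $\htom$-norm are equivalent on $\cicom$, whence both completions coincide: $\ggcom=\htcom$. The estimate then carries over to all $u\in\htcom$ by passing to the limit, together with the $\Gradgrad$–$\Delta$ identity from Step 2. Finally, if $u\in\ggczom=\ggcom\cap N(\Gradgrad)$, the inequality $\norm{u}_{\htom}\leq c\normltom{\Gradgrad u}=0$ forces $u=0$, so $\ggczom=\{0\}$.

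\textbf{Main obstacle.} There is no serious obstacle; everything reduces to iterating the scalar Poincaré inequality (available because $u$ and each $\p_j u$ belong to $\hocom$ when $u\in\cicom$) and a standard double integration by parts. The only point requiring care is the density/completion argument identifying $\ggcom$ with $\htcom$, which uses that Cauchy sequences in the weaker $\ggom$-graph norm are automatically Cauchy in the stronger $\htom$ norm thanks to the estimate.
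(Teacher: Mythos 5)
Your proof is correct and complete. The paper itself presents Lemma \ref{formulasone} as a well-known fact without proof, so there is no argument in the text to compare against; your iterated-Poincar\'e argument (applying $\normltom{u}\leq c_{\mathsf g}\normltom{\grad u}$ once to $u$ and once componentwise to $\grad u$), the double integration by parts giving $\normltom{\Gradgrad u}=\normltom{\Delta u}$ for test functions, and the completion argument identifying $\ggcom$ with $\htcom$ via norm equivalence on $\cicom$ are precisely the standard route, and each step is justified. The constant bookkeeping is also right: $\norm{u}_{\htom}^2\leq(1+c_{\mathsf g}^2+c_{\mathsf g}^4)\normltom{\Gradgrad u}^2$ and $1+c_{\mathsf g}^2+c_{\mathsf g}^4\leq(1+c_{\mathsf g}^2)^2$, and the conclusion $\ggczom=\{0\}$ follows immediately from the estimate once $\ggcom=\htcom$ is established.
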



By straight forward calculations and standard arguments for distributions, 
see the Appendix, we get the following.

\begin{lem} 
\label{formulastwo}
It holds:
\begin{itemize}
\item[\bf(i)]
$\skw\Gradgrad\htom=0$, i.e., Hessians are symmetric.
\item[\bf(ii)]
$\tr\Rot\RomS=0$, i.e., rotations of symmetric tensors are trace free.
\end{itemize}
These formulas extend to distributions as well.
\end{lem}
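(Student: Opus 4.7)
The plan is to verify both claims as pointwise algebraic identities on smooth tensors and then extend to the stated Sobolev/distributional setting by a routine testing argument. The algebra rests on Schwarz's theorem for (i) and on antisymmetry of the Levi-Civita symbol paired with the symmetry of $\bfE$ for (ii).

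For (i), I would first note that for $u\in C^{\infty}(\om)$ the Hessian $(\Gradgrad u)_{ij}=\partial_i\partial_j u$ is symmetric by Schwarz's theorem, so $\skw\Gradgrad u=0$ pointwise. For $u\in\htom$, or more generally any $u\in\mathcal{D}'(\om)$, the identity is inherited because distributional partial derivatives commute: testing the $(i,j)$-entry of $\skw\Gradgrad u$ against $\varphi\in\cicom$ gives
\begin{align*}
\tfrac{1}{2}\langle\partial_i\partial_j u-\partial_j\partial_i u,\varphi\rangle = \tfrac{1}{2}\langle u,\partial_j\partial_i\varphi-\partial_i\partial_j\varphi\rangle = 0.
\end{align*}
Applying this to every off-diagonal entry of $\Gradgrad u$ yields $\skw\Gradgrad u=0$.

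For (ii), using the row-wise convention $(\Rot\bfE)_{ij}=\epsilon_{jkl}\,\partial_k\bfE_{il}$ together with the Levi-Civita symbol, the $i$-th diagonal entry is $\epsilon_{ikl}\,\partial_k\bfE_{il}$, hence
\begin{align*}
\tr\Rot\bfE = \sum_{i,k,l}\epsilon_{ikl}\,\partial_k\bfE_{il}.
\end{align*}
For smooth symmetric $\bfE$, relabelling $i\leftrightarrow l$ and combining $\epsilon_{lki}=-\epsilon_{ikl}$ with $\bfE_{li}=\bfE_{il}$ shows that this sum equals its own negative, hence vanishes. To extend the identity to $\bfE\in\RomS$, or to any symmetric distributional tensor, I would test the distribution $\tr\Rot\bfE$ against $\varphi\in\cicom$:
\begin{align*}
\langle\tr\Rot\bfE,\varphi\rangle = -\sum_{i,k,l}\epsilon_{ikl}\,\langle\bfE_{il},\partial_k\varphi\rangle,
\end{align*}
and the same antisymmetry/symmetry cancellation forces this pairing to be zero for every $\varphi$.

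There is no serious obstacle here. The only care required is to fix the index conventions for the row-wise $\Rot$ consistently with the rest of the paper before performing the index calculus; both parts are then one-line algebraic identities, and the distributional extensions are entirely routine duality arguments of the type the Appendix will spell out.
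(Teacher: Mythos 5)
Your proof is correct, and for part (i) it coincides with the paper's (the paper also tests $\scp{u}{\p_i\p_j\varphi-\p_j\p_i\varphi}$, noting formally $(\skw\Gradgrad)^{*}=\divDiv\skw$). For part (ii) your route differs from the paper's. You compute $\tr\Rot\bfE=\sum_{i,k,l}\epsilon_{ikl}\p_k\bfE_{il}$ directly and let the Levi--Civita antisymmetry annihilate the symmetric $\bfE$ in one line, extending to distributions by a single integration by parts. The paper instead proves the quantitative identity $\tr\Rot\bfM=2\div\big(\spn^{-1}\skw\bfM\big)$ for \emph{arbitrary} (not necessarily symmetric) $\bfM$, by recognising $\Rot(\,\cdot\,\bfI)$ as the formal adjoint of $\tr\Rot$ and using the auxiliary formula $\Rot(u\,\bfI)=-\spn\grad u$; the claim for symmetric $\bfE$ then drops out because $\skw\bfE=0$. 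Your argument is more elementary and self-contained, needing no auxiliary identities from the Appendix; the paper's buys the explicit formula for the trace in the non-symmetric case, which fits naturally into the catalogue of identities in Lemma \ref{appformulasproof} and makes the duality structure (adjoints $\tr\Rot$ vs.\ $\Rot(\,\cdot\,\bfI)$, $\skw\Gradgrad$ vs.\ $\divDiv\skw$) explicit. Both are complete proofs of the stated lemma.
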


With Lemma \ref{formulasone} and Lemma \ref{formulastwo} let us now consider the linear operators
\begin{align}
\label{Gradgradcdef}
\Az:=\Gradgradc:\ggcom=\htcom\subset\ltom
&\To\ltomS,
&
u
&\mapsto\Gradgrad u,\\
\label{RotcSdef}
\Ao:=\RotcS:\RcomS\subset\ltomS
&\To\ltomT,
&
\bfM
&\mapsto\Rot\bfM,\\
\label{DivcTdef}
\At:=\DivcT:\DcomT\subset\ltomT
&\To\ltom,
&
\bfE
&\mapsto\Div\bfE.
\end{align}
These are well and densely defined and closed.
Closedness is clear. For densely definedness we look, e.g., at $\RotcS$.
For $\bfM\in\ltomS$ pick $(\bfPhi_{n})\subset\Cicom$ with $\bfPhi_{n}\to\bfM$ in $\Ltom$. Then 
$$\norm{\bfM-\sym\bfPhi_{n}}_{\Ltom}^2
+\norm{\skw\bfPhi_{n}}_{\Ltom}^2
=\norm{\bfM-\bfPhi_{n}}_{\Ltom}^2\to0,$$
showing $(\sym\bfPhi_{n})\subset\Cicom\cap\ltomS\subset\RcomS$
and $\sym\bfPhi_{n}\to\bfM$ in $\ltomS$. By Lemma \ref{formulasone} the kernels are
\begin{align*}
N(\Gradgradc)
=\ggczom
&=\{0\},
&
N(\RotcS)
&=\RczomS,
&
N(\DivcT)
&=\DczomT.
\end{align*}

\begin{lem} 
\label{adjoints}
The adjoints of \eqref{Gradgradcdef}, \eqref{RotcSdef}, \eqref{DivcTdef} are
\begin{align*}
\Azs=(\Gradgradc)^{*}=\divDivS:\dDomS\subset\ltomS
&\To\ltom,
&
\bfM
&\mapsto\divDiv\bfM,\\
\Aos=(\RotcS)^{*}=\symRotT:\symRomT\subset\ltomT
&\To\ltomS,
&
\bfE
&\mapsto\symRot\bfE,\\
\Ats=(\DivcT)^{*}=-\devGrad:\devGom=\hoom\subset\ltom
&\To\ltomT,
&
\vecv
&\mapsto-\devGrad\vecv
\end{align*}
with kernels
\begin{align*}
N(\divDivS)
&=\dDzomS,
&
N(\symRotT)
&=\symRzomT,
&
N(\devGrad)
&=\RTz.
\end{align*}
\end{lem}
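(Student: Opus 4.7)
The plan is to verify each of the three adjoint identifications separately by the standard duality scheme, then read off the three kernel descriptions. In each case I first establish that the proposed operator (with its proposed domain) is contained in the abstract adjoint, by integrating by parts against compactly supported test tensors and using density to extend to the full domain of definition; then I establish the reverse inclusion by reinterpreting the adjoint identity distributionally and recovering the required $L^2$ regularity of the derivative.

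\textbf{Adjoint of $\Gradgradc$.} For $\bfM\in\dDomS$ and $u\in\cicom$, two integrations by parts give $\scpLtom{\Gradgrad u}{\bfM}=\scpltom{u}{\divDiv\bfM}$; density of $\cicom$ in $\htcom=D(\Gradgradc)$ extends this to every $u\in\htcom$, so $\bfM\in D((\Gradgradc)^{*})$ with $(\Gradgradc)^{*}\bfM=\divDiv\bfM$. Conversely, if $\bfM\in\ltomS$ lies in $D((\Gradgradc)^{*})$ with $(\Gradgradc)^{*}\bfM=f\in\ltom$, testing the identity against $u\in\cicom$ is exactly the distributional definition of $\divDiv\bfM$, hence $\divDiv\bfM=f\in\ltom$ and $\bfM\in\dDomS$.

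\textbf{Adjoint of $\RotcS$.} For $\bfE\in\symRomT$ and $\bfM\in\Cicom\cap\ltomS$, a row-wise integration by parts yields $\scpLtom{\Rot\bfM}{\bfE}=\scp{\bfM}{\Rot\bfE}_{\mathcal{D}'}$; symmetry of $\bfM$ lets us replace $\Rot\bfE$ by $\symRot\bfE$, so the right-hand side equals $\scpLtom{\bfM}{\symRot\bfE}$, and density extends the identity to $\bfM\in\RcomS$. Thus $\bfE\in D((\RotcS)^{*})$ with $(\RotcS)^{*}\bfE=\symRot\bfE$. For the converse, suppose $\bfE\in D((\RotcS)^{*})$ with $(\RotcS)^{*}\bfE=\bfF\in\ltomS$. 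For arbitrary $\bfPhi\in\Cicom$ the symmetric test field $\bfM=\sym\bfPhi\in\Cicom\cap\ltomS$ satisfies $\scp{\sym\Rot\bfE}{\bfPhi}_{\mathcal{D}'}=\scp{\Rot\bfE}{\sym\bfPhi}_{\mathcal{D}'}=\scpLtom{\bfM}{\bfF}=\scpLtom{\bfPhi}{\bfF}$ (the last equality because $\bfF$ is symmetric), so $\symRot\bfE=\bfF\in\ltomS$ and $\bfE\in\symRomT$.

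\textbf{Adjoint of $\DivcT$.} Here the crucial input is Lemma~\ref{kerneldevGrad}(ii), namely $\devGom=\hoom$. For $\vecv\in\hoom$ and $\bfE\in\Cicom\cap\ltomT$, integration by parts gives $\scpltom{\Div\bfE}{\vecv}=-\scpLtom{\bfE}{\Grad\vecv}=-\scpLtom{\bfE}{\devGrad\vecv}$, where the second equality uses $\tr\bfE=0$. Density extends this to $\bfE\in\DcomT$, showing $\vecv\in D((\DivcT)^{*})$ with $(\DivcT)^{*}\vecv=-\devGrad\vecv$. Conversely, if $\vecv\in D((\DivcT)^{*})$ with $(\DivcT)^{*}\vecv=-\bfG\in\ltomT$, testing against arbitrary $\bfE=\dev\bfPhi$ for $\bfPhi\in\Cicom$ shows $\devGrad\vecv=\bfG$ in the sense of distributions (both sides are trace-free, so testing against trace-free fields suffices). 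Hence $\vecv\in\devGom=\hoom$ by Lemma~\ref{kerneldevGrad}(ii).

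\textbf{Kernels.} The identities $N(\divDivS)=\dDzomS$ and $N(\symRotT)=\symRzomT$ follow directly from the definitions of these zero-subscript spaces, while $N(\devGrad)=\RTz$ is exactly Lemma~\ref{kerneldevGrad}(iii). The only non-trivial obstacle in the whole argument is the converse for $(\DivcT)^{*}$: without the Korn-type identity $\devGom=\hoom$ furnished by Lemma~\ref{kerneldevGrad}, one would only know $\devGrad\vecv\in\ltomT$, which a priori is weaker than $\vecv\in\hoom$; it is the Nečas-based step in Lemma~\ref{kerneldevGrad}(i)--(ii) that closes this gap.
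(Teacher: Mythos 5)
Your proposal is correct and follows essentially the same route as the paper's proof: each adjoint is identified by testing against smooth symmetric/trace-free tensors, using $\sym^2=\sym$, $\dev^2=\dev$ and density of such test fields in the relevant domains, with the Korn-type identity $\devGom=\hoom$ from Lemma~\ref{kerneldevGrad}(ii) supplying the crucial regularity gain for $(\DivcT)^{*}$, and Lemma~\ref{kerneldevGrad}(iii) giving $N(\devGrad)=\RTz$. The only cosmetic difference is that you split each identification into two inclusions, whereas the paper phrases it as a chain of equivalences.
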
 

\begin{proof}
We have $\bfM\in D\big((\Gradgradc)^{*}\big)\subset\ltomS$
and $(\Gradgradc)^{*}\bfM=u\in\ltom$, if and only if
$\bfM\in\ltomS$ and there exists $u\in\ltom$, such that
\begin{align*}
&
&
\forall\,\varphi&\in D(\Gradgradc)=\htcom
&
\scp{\Gradgrad\varphi}{\bfM}_{\ltomS}
&=\scp{\varphi}{u}_{\ltom}\\
&\equi
&
\forall\,\varphi&\in\cicom
&
\scp{\Gradgrad\varphi}{\bfM}_{\Ltom}
&=\scp{\varphi}{u}_{\ltom},
\end{align*}
if and only if $\bfM\in\dDom\cap\ltomS=\dDomS$
and $\divDiv\bfM=u$.
Moreover, we observe that $\bfE\in D\big((\RotcS)^{*}\big)\subset\ltomT$
and $(\RotcS)^{*}\bfE=\bfM\in\ltomS$, if and only if
$\bfE\in\ltomT$ and there exists $\bfM\in\ltomS$, such that
(note $\sym^2=\sym$)
\begin{align*}
&
&
\forall\,\bfPhi&\in D(\RotcS)=\RcomS
&
\scp{\Rot\bfPhi}{\bfE}_{\ltomT}
&=\scp{\bfPhi}{\bfM}_{\ltomS}\\
&\equi
&
\forall\,\bfPhi&\in\Cicom\cap\ltomS
&
\scp{\Rot\sym\bfPhi}{\bfE}_{\Ltom}
&=\scp{\sym\bfPhi}{\bfM}_{\Ltom}\\
&\equi
&
\forall\,\bfPhi&\in\Cicom
&
\scp{\Rot\sym\bfPhi}{\bfE}_{\Ltom}
&=\scp{\sym\bfPhi}{\bfM}_{\Ltom}\\
&\equi
&
\forall\,\bfPhi&\in\Cicom
&
\scp{\Rot\sym\bfPhi}{\bfE}_{\Ltom}
&=\scp{\bfPhi}{\bfM}_{\Ltom},
\end{align*}
if and only if $\bfE\in\symRom\cap\ltomT=\symRomT$
and $\symRot\bfE=\bfM$. Similarly, we see that $\vecv\in D\big((\DivcT)^{*}\big)\subset\ltom$
and $(\DivcT)^{*}\vecv=\bfE\in\ltomT$, if and only if
$\vecv\in\ltom$ and there exists $\bfE\in\ltomT$, such that
(note $\dev^2=\dev$)
\begin{align*}
&
&
\forall\,\bfPhi&\in D(\Divc_{\bbS})=\DcomT
&
\scp{\Div\bfPhi}{\vecv}_{\ltom}
&=\scp{\bfPhi}{\bfE}_{\ltomT}\\
&\equi
&
\forall\,\bfPhi&\in\Cicom\cap\ltomT
&
\scp{\Div\dev\bfPhi}{\vecv}_{\ltom}
&=\scp{\dev\bfPhi}{\bfE}_{\Ltom}\\
&\equi
&
\forall\,\bfPhi&\in\Cicom
&
\scp{\Div\dev\bfPhi}{\vecv}_{\ltom}
&=\scp{\dev\bfPhi}{\bfE}_{\Ltom}\\
&\equi
&
\forall\,\bfPhi&\in\Cicom
&
\scp{\Div\dev\bfPhi}{\vecv}_{\ltom}
&=\scp{\bfPhi}{\bfE}_{\Ltom},
\end{align*}
if and only if $\vecv\in\devGom=\hoom$ and $-\devGrad\vecv=\bfE$
using Lemma \ref{kerneldevGrad}. Lemma \ref{kerneldevGrad} also shows
$N(\devGrad)
=\devGzom
=\RTz$,
completing the proof.
\end{proof}

\begin{rem}
\label{opcomporem}
Note that, e.g., the second order operator $\Gradgradc$
is ``one'' operator and not a composition
of the two first order operators $\Gradc$ and $\gradc$.
Similarly the operator $\divDivS$, $\symRotT$, resp. $\devGrad$
has to be understood as ``one'' operator.
\end{rem}

We observe the following complex properties for $\Az$, $\Ao$, $\At$, and $\Azs$, $\Aos$, $\Ats$.

\begin{lem}
\mylabel{complexgradGradRotlem}
It holds
\begin{align*}
\RotcS\Gradgradc&=0,
&
\DivcT\RotcS&=0,
&
\divDivS\symRotT&=0,
&
\symRotT\devGrad&=0,
\end{align*}
i.e.,
\begin{align*}
R(\Gradgradc)&\subset N(\RotcS),
&
R(\symRotT)&\subset N(\divDivS),\\
R(\RotcS)&\subset N(\DivcT),
&
R(\devGrad)&\subset N(\symRotT).
\end{align*}
\end{lem}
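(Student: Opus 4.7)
The plan is to reduce the four complex identities to the classical vector-calculus facts $\rot\grad=0$ and $\div\rot=0$ applied row-wise, together with the abstract adjoint observation already recorded in Section~\ref{fuanasec}: for dual pairs $(\Az,\Azs)$ and $(\Ao,\Aos)$ with $\Ao\Az=0$, the inclusion $R(\Az)\subset N(\Ao)$ automatically yields $R(\Aos)\subset N(\Azs)$. This reduces the task to verifying only the two primal identities $\RotcS\Gradgradc=0$ and $\DivcT\RotcS=0$.

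For $\RotcS\Gradgradc=0$, given $u\in\htcom=D(\Gradgradc)$ I would choose an approximating sequence $u_n\in\cicom$ with $u_n\to u$ in $\htom$. For each smooth $u_n$, the $i$-th row of $\Gradgrad u_n$ equals $\grad\p_i u_n$, so row-wise $\rot\grad=0$ gives $\Rot\Gradgrad u_n=0$; combined with the symmetry of the Hessian (Lemma~\ref{formulastwo}(i)) this places $\Gradgrad u_n$ in $\Cicom\cap\ltomS$ with vanishing $\Rot$. Since $\Gradgrad u_n\to\Gradgrad u$ in $\Ltom$ and the $\Rot$-values vanish along the sequence, the convergence is automatically in the graph norm of $\Rom$, so $\Gradgrad u\in\RczomS = N(\RotcS)$. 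The same template proves $\DivcT\RotcS=0$: for $\bfM\in\Cicom\cap\ltomS$, Lemma~\ref{formulastwo}(ii) places $\Rot\bfM$ in $\Cicom\cap\ltomT$, and row-wise $\div\rot=0$ yields $\Div\Rot\bfM=0$, so $\Rot\bfM\in\DczomT$. Density of $\Cicom\cap\ltomS$ in $\RcomS$ combined with the identically vanishing $\Div$ along the sequence extends this to every $\bfM\in\RcomS$.

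Finally, the dual identities $\divDivS\symRotT=0$ and $\symRotT\devGrad=0$ follow at once from the adjoint observation: applying it with $\Az=\Gradgradc$ and $\Ao=\RotcS$, whose adjoints are $\Azs=\divDivS$ and $\Aos=\symRotT$ by Lemma~\ref{adjoints}, turns $\RotcS\Gradgradc=0$ into $\divDivS\symRotT=0$; applying it with $\Az=\RotcS$ and $\Ao=\DivcT$, whose adjoints are $\Azs=\symRotT$ and $\Aos=-\devGrad$, turns $\DivcT\RotcS=0$ into $\symRotT(-\devGrad)=0$, i.e., $\symRotT\devGrad=0$. The only delicate point in the plan is ensuring that the smooth approximations in $\cicom$ and $\Cicom\cap\ltomS$ converge in the graph norm of the next operator in the chain, rather than merely in $L^2$; but this is immediate, since the relevant $\Rot$ or $\Div$ along the sequence is identically zero, so the graph-norm convergence collapses to $L^2$-convergence of the underlying test fields.
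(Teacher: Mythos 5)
Your proof is correct and takes essentially the same approach as the paper: both prove the primal identities by approximating elements of $D(\Gradgradc)$ and $D(\RotcS)$ in graph norm by smooth compactly supported fields, invoking Lemma~\ref{formulastwo} for symmetry/trace-freeness together with the row-wise identities $\rot\grad=0$, $\div\rot=0$, and passing to the limit; and both obtain the dual identities $\divDivS\symRotT=0$ and $\symRotT\devGrad=0$ from the abstract adjoint observation (after \eqref{sequenceprop}) that $\Ao\Az=0$ implies $\Azs\Aos=0$.
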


\begin{proof}
For $\bfE=\Rot\bfM\in R(\RotcS)$
with $\bfM\in D(\RotcS)$ there exists a sequence $(\bfM_{n})\subset\Cicom\cap\ltomS$
such that $\bfM_{n}\to\bfM$ in the graph norm of $D(\RotcS)$. As 
$$\Rot\big(\Cicom\cap\ltomS\big)
\subset\Cicom\cap\ltomT\cap\Dzom
\subset N(\DivcT)$$
we have $\bfE\in N(\DivcT)$ since $\bfE\ot\Rot\bfM_{n}\in N(\DivcT)$.
Hence $R(\RotcS)\subset N(\DivcT)$, i.e.,
$\DivcT\RotcS=0$ and for the adjoints we have
$\symRotT\devGrad=0$. Analogously, we see the other two inclusions.
\end{proof}

\begin{rem} 
\label{complexGradgrad}
The latter considerations show that the sequence
$$\begin{CD}
\{0\} @> 0 >>
\htcom @> \Gradgradc >>
\RcomS @> \RotcS >>
\DcomT @> \DivcT >>
\ltom @> \pi_{\RTz} >>
\RTz
\end{CD}$$
and thus also its dual or adjoint sequence
$$\begin{CD}
\{0\} @< 0 <<
\ltom @< \divDivS <<
\dDomS @< \symRotT << 
\symRomT @< -\devGrad <<
\hoom @< \iota_{\RTz} <<
\RTz
\end{CD}$$
are Hilbert complexes. 
Here $\pi_{\RTz}:\ltom\to\RTz$ 
denotes the orthogonal projector onto $\RTz$
with adjoint $\pi_{\RTz}^{*}=\iota_{\RTz}:\RTz\to\ltom$,
the canonical embedding. 
The first complex might by called $\Gradgradc$-complex
and the second one $\divDiv$-complex.
\end{rem}

\subsection{Topologically Trivial Domains}
\mylabel{kernelsecsimple}

We start with a useful lemma, which will be shown in the Appendix, 
collecting a few differential identities, 
which will be utilized in the proof of the subsequent main theorem.

\begin{lem} 
\label{formulas}
Let $u$, $\vecv$, and $\bfE$ be distributional
scalar, vector, and tensor fields. Then
\begin{enumerate}
\item[\bf(i)]
$2\skw\Grad\vecv=\spn\rot\vecv$,
\item[\bf(ii)]
$\Rot\spn\vecv=(\div\vecv)\,\bfI-(\Grad\vecv)^{\top}$
and, as a consequence, $\tr\Rot\spn\vecv=2\div\vecv$,
\item[\bf(iii)]
$\Div(u\,\bfI)=\grad u$ and 
$\Rot(u\,\bfI)=-\spn\grad u$,
\item[\bf(iv)]
$2\grad\div\vecv=3\Div\big(\dev\,(\Grad\vecv)^{\top}\big)$,
\item[\bf(v)]
$\skw\Rot\bfE=\spn\vecw$
and
$\Div(\symRot\bfE)=\rot\vecw$
with
$2\vecw=\Div\bfE^{\top}-\grad(\tr\bfE)$,
\item[\bf(vi)]
$\Div(\spn\vecv)=-\rot \vecv$.
\end{enumerate}
\end{lem}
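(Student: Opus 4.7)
Each of the six identities is linear in the field and involves only constant-coefficient first-order differential operators. It therefore suffices to prove them for smooth fields (say, restrictions of $\cic(\rt)$-fields) and extend to distributions by the standard testing argument: integration by parts transfers every operator onto test functions, and equality of the two sides against any test field is exactly the distributional statement. I would work componentwise in an orthonormal basis of $\rt$, using the identifications $(\spn a)_{ij}=-\varepsilon_{ijk}a_{k}$, $(\spn a)b=a\times b$, $(\Grad\vecv)_{ij}=\partial_{j}v_{i}$, $(\Rot\bfE)_{ij}=\varepsilon_{jk\ell}\partial_{k}\bfE_{i\ell}$, $(\Div\bfE)_{i}=\partial_{j}\bfE_{ij}$, and the contraction $\varepsilon_{\ell ab}\varepsilon_{\ell cd}=\delta_{ac}\delta_{bd}-\delta_{ad}\delta_{bc}$.

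\textbf{Easy identities.} Items (i), (iii), and (vi) reduce to one-line componentwise computations. For (i), $2(\skw\Grad\vecv)_{ij}=\partial_{j}v_{i}-\partial_{i}v_{j}$ matches $(\spn\rot\vecv)_{ij}=-\varepsilon_{ijk}\varepsilon_{k\ell m}\partial_{\ell}v_{m}$ after the $\varepsilon\cdot\varepsilon$ contraction. For (iii), direct row-wise differentiation of the diagonal field $u\,\bfI$ gives both formulas. For (vi), $(\Div\spn\vecv)_{i}=-\varepsilon_{ijk}\partial_{j}v_{k}=-(\rot\vecv)_{i}$.

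\textbf{Tensor identities (ii) and (iv).} For (ii), applying $\Rot$ row-wise to $\spn\vecv$ and contracting $\varepsilon\cdot\varepsilon$ yields $(\Rot\spn\vecv)_{ij}=(\div\vecv)\delta_{ij}-\partial_{i}v_{j}$, which is $((\div\vecv)\bfI-(\Grad\vecv)^{\top})_{ij}$; the trace formula $\tr\Rot\spn\vecv=3\div\vecv-\div\vecv=2\div\vecv$ follows immediately. For (iv), the routine identity $(\Div(\Grad\vecv)^{\top})_{i}=\partial_{j}\partial_{i}v_{j}=\partial_{i}\div\vecv$ gives $\Div(\Grad\vecv)^{\top}=\grad\div\vecv$, and combining this with (iii) applied to $u=\div\vecv$ produces $\Div\dev((\Grad\vecv)^{\top})=\grad\div\vecv-\tfrac{1}{3}\grad\div\vecv=\tfrac{2}{3}\grad\div\vecv$, which is the claim.

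\textbf{Identity (v) and main obstacle.} For the first assertion of (v), decompose $\Rot\bfE=\symRot\bfE+\skw\Rot\bfE$ and write $\skw\Rot\bfE=\spn\vecw$, using that $\spn$ is a bijection onto the skew matrices. The inverse relation reads $2w_{k}=-\varepsilon_{kij}(\Rot\bfE)_{ij}$, and expanding with the definition of $\Rot$ and contracting $\varepsilon\cdot\varepsilon$ produces $2w_{k}=\partial_{i}\bfE_{ik}-\partial_{k}\bfE_{ii}$, i.e.\ $2\vecw=\Div\bfE^{\top}-\grad\tr\bfE$. The second assertion then follows by applying $\Div$ to $\Rot\bfE=\symRot\bfE+\spn\vecw$: the left-hand side vanishes row-wise since $\div\rot=0$, and $\Div\spn\vecw=-\rot\vecw$ by (vi), hence $\Div(\symRot\bfE)=\rot\vecw$. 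The main obstacle is purely notational, namely the consistent bookkeeping of signs, index orderings, and row- versus column-orientation of $\Rot$, $\Div$, and the transpose appearing in (v); the identities themselves are elementary, and the distributional extension is automatic by the testing argument above.
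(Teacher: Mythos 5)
Your proposal is correct and follows essentially the same route as the paper: elementary componentwise computations with the $\varepsilon$-tensor for the smooth case, the observation $\Div\Rot=0$ to pass from the first to the second identity in (v), and the standard testing argument to extend to distributions. The only minor divergence is (iv), where you compute $\Div(\Grad\vecv)^{\top}=\grad\div\vecv$ directly and then subtract the trace term, whereas the paper reaches the same conclusion by expanding $0=\Div\Rot\spn\vecv$ using (ii); both variants are elementary and equivalent.
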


Observe that we already know that $N(\Gradgradc)=\{0\}$ 
and $N(\devGrad)=\RTz$.
If the topology of the underlying domain is trivial, 
we will now characterize the remaining kernels and the ranges of the linear operators 
$\Gradgradc$, $\RotcS$, $\DivcT$, and 
$\devGrad$, $\symRotT$, $\divDivS$.

\begin{theo} 
\label{maintheoregpot}
Let $\om$ be additionally topologically trivial. Then 
\begin{align*}
\text{\bf(i)}&
&
\RczomS
=N(\RotcS)
&=R(\Gradgradc)
=\Gradgrad\htcom,\\
\text{\bf(ii)}&
&
\DczomT
=N(\DivcT)
&=R(\RotcS)
=\Rot\HocomS,\\
\text{\bf(iii)}&
&
\RTz^{\bot_{\ltom}}
=N(\pi_{\RTz})
&=R(\DivcT)
=\Div\HocomT,\\
\text{\bf(iv)}&
&
\symRzomT
=N(\symRotT)
&=R(\devGrad)
=\devGrad\hoom,\\
\text{\bf(v)}&
&
\dDzomS
=N(\divDivS)
&=R(\symRotT)
=\symRot\HoomT,\\
\text{\bf(vi)}&
&
\ltom 
=N(0)
&=R(\divDivS)
=\divDiv\HtomS.
\end{align*}
Especially, all latter ranges are closed and admit regular $\ho$-potentials.
The corresponding linear and continuous (regular) potential operators are given by
\begin{align*}
\Pot_{\Gradgradc}
=\Pot_{\gradc}\Pot_{\Gradc}
:\RczomS&\To\htcom,\\
\Pot_{\RotcS}
=\sym\big(1-2\Grad\Pot_{\rotc}\spn^{-1}\skw\big)\Pot_{\Rotc}
:\DczomT&\To\HocomS,\\
\Pot_{\DivcT}
=\dev\big(1+\foh\Grad^{\top}\Pot_{\divc}\tr\big)\Pot_{\Divc}
:\RTz^{\bot_{\ltom}}&\To\HocomT,\\
\Pot_{\devGrad}
=\Grad^{-1}\big(1+\foh(\grad^{-1}\Div(\,\cdot\,)^{\top})\,\bfI\big)
:\symRzomT&\To\hoom,\\
\Pot_{\symRotT}
=\dev\Pot_{\Rot}\big(1+\spn\rot^{-1}\Div\big)
:\dDzomS&\To\HoomT,\\
\Pot_{\divDivS}
=\sym\Pot_{\Div}\Pot_{\div}
:\ltom&\To\HtomS.
\end{align*}
\end{theo}

\begin{rem} 
\label{maintheoregpotrem}
It holds
$$\HoomS
=\sym\Hoom,\quad
\HoomT
=\dev\Hoom,\quad
\HocomS
=\sym\Hocom,\quad
\HocomT
=\dev\Hocom$$
as, e.g., $\dev\Hoom\subset\HoomT=\dev\HoomT\subset\dev\Hoom$.
The same holds for the corresponding spaces of skew-symmetric tensor fields as well. 
Moreover:
\begin{itemize}
\item[\bf(i)]
Theorem \ref{maintheoregpot} holds also for the other set
of canonical boundary conditions, which follows directly from the proof.
\item[\bf(ii)]
A closer inspection shows, that for (iii) and (vi), i.e.,
$\Pot_{\DivcT}$ and $\Pot_{\divDivS}$, 
only the potential operators corresponding to the divergence, i.e., 
$\Pot_{\divc}$, $\Pot_{\Divc}$,
$\Pot_{\Div}$, $\Pot_{\div}$, are involved.
As Lemma \ref{divpotlem} does not need any topological assumptions,
(iii) and (vi), together with the representations of the potential operators,
hold for general topologies as well.
\end{itemize}
\end{rem} 

\begin{proof}[Proof of Theorem \ref{maintheoregpot}]
Note that by Lemma \ref{kerneldevGrad} (iii), Lemma \ref{formulasone}, and
Lemma \ref{complexgradGradRotlem} all inclusions of the type $R(\ldots)\subset N(\ldots)$ easily follow. 
Therefore it suffices to show that $N(\ldots)$ is included in the corresponding space appearing 
at the end of each line in (i) - (vi), which itself is obviously included in $R(\ldots)$.
Throughout the proof we will frequently use the formulas of Lemma \ref{formulas}.

ad (i):
Let $\bfM\in\RczomS=N(\RotcS)$. 
Applying Lemma \ref{napotlem} for $m=0$ row-wise, there is a vector field 
$\vecv := \Pot_{\Gradc} \bfM \in\hocom$ with $\bfM=\Grad\vecv$.
Since $\skw \bfM=0$ and $2\skw \Grad\vecv = \spn \rot \vecv$, it follows that $\rot \vecv = 0$.
By Lemma \ref{napotlem} for $m=1$ there is a function 
$u := \Pot_{\gradc} \vecv \in\htcom$ with $\vecv=\grad u$.
Hence $\bfM = \Grad\vecv = \Gradgrad u\in\Grad \grad \htcom$. 
So $\RczomS\subset \Gradgrad\htcom$,
which completes the proof of (i).
Note that
$$\Pot_{\Gradgradc}\bfM
:=u=\Pot_{\gradc}\Pot_{\Gradc}\bfM
\in\htcom,$$
from which it directly follows that $\Pot_{\Gradgradc}$ is linear and bounded.

ad (ii):
Let $\bfE\in\DczomT=N(\DivcT)$. 
Then there is a tensor field $\bfN := \Pot_{\Rotc} \bfE \in \Hocom$ 
with $\bfE = \Rot\bfN$, see Lemma \ref{rotpotlem} for $m=0$ applied row-wise. 
Since $\tr \bfE = 0$ and $\tr \Rot \sym\bfN = 0$, it follows that $\tr \Rot \skw\bfN = 0$. 
Now let $\vecv := \spn^{-1} \skw\bfN \in \hocom$, i.e., $\skw \bfN = \spn\vecv$. 
Since $\tr \Rot \spn\vecv = 2 \, \div\vecv$, it follows that $\div\vecv = 0$.
Therefore, there is a vector field $\vecw := \Pot_{\rotc} \vecv \in \htcom$ 
such that $\vecv = \rot \vecw$, see Lemma \ref{rotpotlem} for $m=1$.
So we have
$$\Rot\skw\bfN
=\Rot\spn\rot\vecw
=2\Rot\skw\Grad\vecw
=-2\Rot\sym\Grad\vecw.$$
Hence
$$ \bfE 
  = \Rot \bfN 
  = \Rot \sym \bfN + \Rot \skw \bfN 
  = \Rot \bfM,\qquad
\bfM 
:= \sym\bfN - 2  \sym \Grad \vecw \in \HocomS,  $$
So $ \DczomT\subset \Rot \HocomS$,
which completes the proof of (ii).
Note that
\begin{align*}
\Pot_{\RotcS}\bfE
:=\bfM 
&=\sym \Pot_{\Rotc}\bfE 
-2\sym\Grad\big(\Pot_{\rotc}\spn^{-1}\skw \Pot_{\Rotc}\bfE\big)\\
&=\sym\big(1-2\Grad \Pot_{\rotc}\spn^{-1}\skw\big)
\Pot_{\Rotc}\bfE
\in\HocomS,
\end{align*}
from which it directly follows that $\Pot_{\RotcS}$ is linear and bounded.

ad (iii): 
Let $\vecv \in \RTz^{\bot_{\ltom}}=N(\pi_{\RTz})$. 
As $\vecv \in (\rt)^{\bot_{\ltom}}$,
there is a tensor field $\bfF = \Pot_{\Divc} \vecv \in \Hocom$ 
with $\vecv = \Div \bfF$, see Lemma \ref{divpotlem} for $m=0$ applied row-wise. 
We have $\Div\bfF\in \RTz^{\bot_{\ltom}}$ as well as $\Div \dev\bfF \in \RTz^{\bot_{\ltom}}$.
Hence $\grad (\tr\bfF)=\Div ((\tr \bfF) \, \bfI) \in  \RTz^{\bot_{\ltom}}$,
which implies $\tr \bfF \in \hocom\cap\ltzom$. 
Therefore, there is a vector field $\vecw := \Pot_{\divc} \tr \bfF \in \htcom$ 
with $\tr \bfF = \div \vecw$, see Lemma \ref{divpotlem} for $m=1$. Thus
$$  \Div ((\tr \bfF) \, \bfI)
   = \grad \div \vecw = \frac{3}{2} \Div \big(\dev\,(\Grad \vecw)^{\top} \big).$$
Hence
$$  \vecv 
  = \Div \bfF 
  = \Div \dev\bfF + \frac{1}{3} \Div((\tr \bfF) \bfI) 
  = \Div \bfE,\qquad
  \bfE := \dev\big(\bfF + \foh(\Grad \vecw)^{\top}\big) \in \HocomT.$$
So $\RTz^{\bot_{\ltom}} \subset \Div \HocomT$,
which completes the proof of (iii).
Note that
\begin{align*}
\Pot_{\DivcT} \vecv 
:=\bfE
&=\dev\big(\Pot_{\Divc}\vecv
+\foh(\Grad \Pot_{\divc}\tr \Pot_{\Divc}\vecv)^{\top}\big)\\ 
&=\dev\big(1+\foh\Grad^{\top}\Pot_{\divc}\tr\big)
\Pot_{\Divc}\vecv
\in\HocomT,
\end{align*}
from which it directly follows that $\Pot_{\DivcT}$ is linear and bounded.

ad (iv): 
Let $\bfE\in\symRzomT=N(\symRotT)$.
Then (trivially)
$\Div\symRot\bfE=0$ and it follows
$$\rot\vecw=0
\quad\text{with}\quad
\vecw:=\frac{1}{2}\big(\Div\bfE^{\top}-\grad(\tr\bfE)\big)
=\frac{1}{2}\Div\bfE^{\top}$$
and
\begin{equation} 
\label{skwRot}
\skw\Rot\bfE=\spn\vecw.
\end{equation}
So $\vecw\in\rhmozom$. Therefore, there is a unique scalar field $u:=\grad^{-1}\vecw\in\ltzom$, such that
$$\vecw=\grad u,$$
see Corollary \ref{nahmorangecor} for $m = 1$.
As $\Rot(u\,\bfI)=-\spn\grad u$ implies $\symRot(u\,\bfI)=0$, we see
$$\bfF:=\bfE+u\,\bfI\in\symRzom.$$
Moreover, by \eqref{skwRot}
$$\skw\Rot\bfF
=\skw\Rot\bfE+\skw\Rot(u\,\bfI)
=\spn\vecw-\spn\grad u
=0.$$
Hence $\bfF\in\Rzom$. 
Therefore, there is a unique vector field $\vecv:=\Grad^{-1}\bfF\in\hoom\cap\ltzom$, such that
$\bfF=\Grad\vecv$, see Lemma \ref{napotlem} for $m=0$.
So we have
$$\bfE=\Grad\vecv -u\,\bfI.$$
From the additional condition $\tr\bfE=0$ it follows that $3\,u=\tr\Grad\vecv=\div\vecv$ leading to
$$\bfE=\devGrad\vecv,\quad
\vecv\in\hoom .$$
So $\symRzomT \subset \devGrad \hoom$,
which completes the proof of (iv). Note that
\begin{align*}
\Pot_{\devGrad} \bfE
:=\vecv
&=\Grad^{-1}\big(\bfE+\foh(\grad^{-1}\Div\bfE^{\top})\,\bfI\big)\\
&=\Grad^{-1}\big(1+\foh(\grad^{-1}\Div(\,\cdot\,)^{\top})\,\bfI\big)\bfE
\in\hoom,
\end{align*}
from which it directly follows that $\Pot_{\devGrad}$ is linear and bounded.

ad (v): 
Let $\bfM\in\dDzomS=N(\divDivS)$.
So $\Div\bfM\in\dhmozom$
and there is a unique vector field $\vecv:=\rot^{-1}\Div\bfM\in\dczom$, such that 
$$\Div\bfM=\rot\vecv=-\Div(\spn\vecv),$$
see  Corollary \ref{rothmorangecor} for $m=1$.
Hence $\Div(\bfM+\spn\vecv)=0$, i.e., $\bfM+\spn\vecv\in\Dzom$,
and by Lemma \ref{rotpotlem} there is a tensor field $\bfF:=\Pot_{\Rot}(\bfM+\spn\vecv)\in\Hoom$, such that 
$$\bfM+\spn\vecv=\Rot\bfF.$$
Observe that $\bfM$ is symmetric and $\spn\vecv$ is skew-symmetric. Thus
$$\bfM=\symRot\bfF
\quad\text{and}\quad
\spn\vecv=\skw\Rot\bfF,\qquad
\bfF\in\Hoom,$$
and hence
$$\bfM=\symRot\bfF=\symRot\bfE
\quad\text{with}\quad 
\bfE:=\dev\bfF\in\HoomT,$$
as $\dev\bfF=\bfF-\frac{1}{3}(\tr\bfF)\,\bfI$
and $\symRot((\tr\bfF)\,\bfI)=0$.
So $\dDzomS\subset\symRot\HoomT$,
which completes the proof of (v).
Note that
\begin{align*}
\Pot_{\symRotT} \bfM 
:=\bfE
&=\dev\Pot_{\Rot}\big(\bfM+\spn\rot^{-1}\Div\bfM\big)\\
&=\dev\Pot_{\Rot}\big(1+\spn\rot^{-1}\Div\big)\bfM
\in\HoomT,
\end{align*}
from which it directly follows that $\Pot_{\symRotT}$ is linear and bounded.

ad (vi): 
Let $u \in \ltom =N(0)$. Then there is a vector field $\vecv = \Pot_{\div} u \in \hoom$ 
with $u = \div\vecv$, see Lemma \ref{divpotlem} for $m=0$,
and a tensor field $\bfN = \Pot_{\Div} \vecv \in \Htom$ such that $\vecv = \Div \bfN$, 
see Lemma \ref{divpotlem} for $m=1$ applied row-wise. 
Since $\divDiv \skw \bfN=0$, it follows that
$$  u = \divDiv \bfN = \div \Div \bfM
  \quad \text{with} \quad 
  \bfM =: \sym\bfN \in \HtomS.$$
So $\ltom\subset\divDiv\HtomS$,
which completes the proof of (vi). Note that
$$  \Pot_{\divDivS} u
  :=\bfM = \sym \Pot_{\Div} \Pot_{\div} u 
   \in \HtomS,$$
from which it directly follows that $\Pot_{\divDivS}$ is linear and bounded.
\end{proof}

Provided that the domain $\om$ has trivial topology, 
Theorem \ref{maintheoregpot} implies that
the densely defined, closed and unbounded linear operators
$\Gradgradc$, $\RotcS$, $\DivcT$,
and their adjoints 
$\divDivS$, $\symRotT$, $\devGrad$ have closed ranges
and that all relevant cohomology groups are trivial, as
\begin{align*}
N(\Gradgradc)\cap N(0)
&=\{0\}\cap\ltom
=\{0\},\\
N(\RotcS)\cap N(\divDivS)
&=\RczomS\cap\dDzomS
=\RczomS\cap\symRot\HoomT\\
&=N(\RotcS)\cap R(\symRotT)
=\{0\},\\
N(\DivcT)\cap N(\symRotT)
&=\DczomT\cap\symRzomT
=\DczomT\cap\devGrad\hoom\\
&=N(\DivcT)\cap R(\devGrad)
=\{0\},\\
N(\pi_{\RTz})\cap N(\devGrad)
&=\RTz^{\bot_{\ltom}}\cap\RTz
=\{0\}.
\end{align*}
In this case, the reduced operators are
\begin{align*}
\cAz=\Gradgradc:\htcom\subset\ltom
&\To\RczomS,\\
\cAo=\RotcS:\RcomS\cap\dDzomS\subset\dDzomS
&\To\DczomT,\\
\cAt=\DivcT:\DcomT\cap\symRzomT\subset\symRzomT
&\To\RTz^{\bot_{\ltom}},\\
\cAzs=\divDivS:\dDomS\cap\RczomS\subset\RczomS
&\To\ltom,\\
\cAos=\symRotT:\symRomT\cap\DczomT\subset\DczomT
&\To\dDzomS,\\
\cAts=-\devGrad:\hoom\cap\RTz^{\bot_{\ltom}}\subset\RTz^{\bot_{\ltom}}
&\To\symRzomT
\end{align*}
as 
\begin{align*}
R(\divDivS)=\ltom,\qquad
R(\DivcT)=\RTz^{\bot_{\ltom}}.
\end{align*}
The functional analysis toolbox Section \ref{fuanasec},
e.g., Lemma \ref{Hilbertadjoint},
immediately lead to the following implications about
Helmholtz type decompositions, Friedrichs/Poincar\'e type estimates
and continuous inverse operators.

\begin{theo} 
\label{maintheo}
Let $\om$ be additionally topologically trivial. 
Then all occurring ranges are closed and all related cohomology groups are trivial.
Moreover, the Helmholtz type decompositions 
\begin{align*}
\ltomS
&=\RczomS
\oplus_{\ltomS}
\dDzomS,
&
\ltomT
&=\DczomT
\oplus_{\ltomT}
\symRzomT
\end{align*} 
hold. The kernels can be represented by the following closed ranges
\begin{align*}
\RczomS
&=\Gradgrad\htcom,\\
\symRot\HoomT
=\dDzomS
&=\symRot\symRomT
=\symRot\big(\symRomT\cap\DczomT\big),\\
\Rot\HocomS
=\DczomT
&=\Rot\RcomS
=\Rot\big(\RcomS\cap\dDzomS\big),\\
\symRzomT
&=\devGrad\hoom
=\devGrad\big(\hoom\cap\RTz^{\bot_{\ltom}}\big),
\intertext{and it holds}
\divDiv\HtomS
=\ltom
&=\divDiv\dDomS
=\divDiv\big(\dDomS\cap\RczomS\big),\\
\Div\HocomT
=\RTz^{\bot_{\ltom}}
=N(\pi_{\RTz})
&=\Div\DcomT
=\Div\big(\DcomT\cap\symRzomT\big).
\end{align*}
All potentials depend continuously on the data.
The potentials on the very right hand sides are uniquely determined.
There exist positive constants $c_{\mathsf{Gg}}$, $c_{\mathsf{D}}$, $c_{\mathsf{R}}$
such that the Friedrichs/Poincar\'e type estimates
\begin{align*}
\forall\,u&\in\htcom
&
\normltom{u}&\leq c_{\mathsf{Gg}}\,\normLtom{\Gradgrad u},\\
\forall\,\bfM&\in\dDomS\cap\RczomS
&
\normLtom{\bfM}&\leq c_{\mathsf{Gg}}\,\normltom{\divDiv\bfM},\\
\forall\,\bfE&\in\DcomT\cap\symRzomT
&
\normLtom{\bfE}&\leq c_{\mathsf{D}}\,\normltom{\Div\bfE},\\
\forall\,\vecv&\in\hoom\cap\RTz^{\bot_{\ltom}}
&
\normltom{\vecv}&\leq c_{\mathsf{D}}\,\normLtom{\devGrad\vecv},\\
\forall\,\bfM&\in\RcomS\cap\dDzomS
&
\normLtom{\bfM}&\leq c_{\mathsf{R}}\,\normLtom{\Rot\bfM},\\
\forall\,\bfE&\in\symRomT\cap\DczomT
&
\normLtom{\bfE}&\leq c_{\mathsf{R}}\,\normLtom{\symRot\bfE}
\end{align*}
hold.
Moreover, the reduced versions of the operators 
$$\Gradgradc,\quad
\divDivS,\quad
\DivcT,\quad
\devGrad,\quad
\RotcS,\quad
\symRotT\quad$$
have continuous inverse operators 
\begin{align*}
(\Gradgradc)^{-1}:
\RczomS&\To\htcom,\\
(\divDivS)^{-1}:
\ltom&\To\dDomS\cap\RczomS,\\
(\DivcT)^{-1}:
\RTz^{\bot_{\ltom}}&\To\DcomT\cap\symRzomT,\\
(\devGrad)^{-1}:
\symRzomT&\To\hoom\cap\RTz^{\bot_{\ltom}},\\
(\RotcS)^{-1}:
\DczomT&\To\RcomS\cap\dDzomS,\\
(\symRotT)^{-1}:
\dDzomS&\To\symRomT\cap\DczomT
\end{align*}
with norms $(1+c_{\mathsf{Gg}}^2)^{\oh}$, $(1+c_{\mathsf{D}}^2)^{\oh}$, resp.
$(1+c_{\mathsf{R}}^2)^{\oh}$.
\end{theo}

\begin{rem} 
\label{devRotrem}
Let $\om$ be additionally topologically trivial.
The Friedrichs/Poincar\'e type estimate for $\Rot\bfM$ in the latter theorem can be slightly sharpened.
Utilizing Lemma \ref{formulastwo} we observe $\tr\Rot\bfM=0$ 
and thus $\dev\Rot\bfM=\Rot\bfM$ for $\bfM\in\RomS$. Hence
$$\forall\,\bfM\in\RcomS\cap\dDzomS\qquad
\normLtom{\bfM}\leq c_{\mathsf{R}}\,\normLtom{\dev\Rot\bfM}.$$
Similarly and trivially we see
$$\forall\,u\in\htcom\qquad
\normltom{u}\leq c_{\mathsf{Gg}}\,\normLtom{\sym\Gradgrad u}.$$
\end{rem}

Recalling Remark \ref{complexGradgrad} we have the following result.

\begin{rem} 
\label{Hilbertcomplex}
Let $\om$ be additionally topologically trivial.
Theorem \ref{maintheoregpot} and Theorem \ref{maintheo}  easily lead to the following result in terms of complexes: 
The sequence
$$\begin{CD}
\{0\} @> 0 >>
\htcom @> \Gradgradc >>
\RcomS @> \RotcS >>
\DcomT @> \DivcT >>
\ltom @> \pi_{\RTz} >>
\RTz
\end{CD}$$
and thus also its dual or adjoint sequence
$$\begin{CD}
\{0\} @< 0 <<
\ltom @< \divDivS <<
\dDomS @< \symRotT << 
\symRomT @< -\devGrad <<
\hoom @< \iota_{\RTz} <<
\RTz
\end{CD}$$
are closed and exact Hilbert complexes. 
\end{rem}

\begin{rem}
The part
$$\begin{CD}
\{0\} @> 0 >>
\htcom @> \Gradgradc >>
\RcomS @> \RotcS >>
\Ltom
\end{CD}$$
of the Hilbert complex from above and the related adjoint complex
$$\begin{CD}
\{0\} @< 0 <<
\ltom @< \divDivS <<
\dDomS @< \symRotT << 
\symRomT 
\end{CD}$$
have been discussed in \cite{Quenneville:2015} for problems in general relativity.
\end{rem} 

\begin{rem}
In 2D and under similar assumptions
we obtain by completely analogous but much simpler arguments that the Hilbert complexes
\begin{align*}
\begin{CD}
\{0\} @> 0 >>
\htcom @> \Gradgradc >>
\RcomS @> \RotcS >>
\ltom @> \pi_{\RTz} >>
\RTz
\end{CD},\\
\begin{CD}
\{0\} @< 0 <<
\ltom @< \divDivS <<
\dDomS @< \symRot << 
\hoom @< \iota_{\RTz} <<
\RTz
\end{CD}
\end{align*}
are dual to each other, closed and exact. 
Contrary to the 3D case, the operator 
$\RotcS$ maps a tensor field to a vector field
and the operator $\symRot\cong\sym\Grad$ is applied row-wise to a vector field 
and maps this vector field to a tensor field. 
The associated Helmholtz decomposition is
$$\ltomS
=\RczomS
\oplus_{\ltomS}
\dDzomS$$
with
$$\RczomS
=\Gradgrad\htcom,\quad
\dDzomS
=\symRot\hoom.$$
\end{rem} 

Theorem \ref{maintheoregpot} leads to the following so called regular decompositions.

\begin{theo} 
\label{regdecotheo}
Let $\om$ be additionally topologically trivial. Then the regular decompositions
\begin{align*}
\RcomS
&=\HocomS+\RczomS,
&
\RczomS
&=\Gradgrad\htcom,\\
\DcomT
&=\HocomT+\DczomT,
&
\DczomT
&=\Rot\HocomS,\\
\symRomT
&=\HoomT+\symRzomT,
&
\symRzomT
&=\devGrad\hoom,\\
\dDomS
&=\HtomS+\dDzomS,
&
\dDzomS
&=\symRot\HoomT
\end{align*}
hold with linear and continuous (regular) decomposition resp. potential operators
\begin{align*}
\Pot_{\RcomS,\HocomS}
&:\RcomS\To\HocomS,
&
\Pot_{\RcomS,\htcom}
&:\RcomS\To\htcom,\\
\Pot_{\DcomT,\HocomT}
&:\DcomT\To\HocomT,
&
\Pot_{\DcomT,\HocomS}
&:\DcomT\To\HocomS,\\
\Pot_{\symRomT,\HoomT}
&:\symRomT\To\HoomT,
&
\Pot_{\symRomT,\hoom}
&:\symRomT\To\hoom,\\
\Pot_{\dDomS,\HtomS}
&:\dDomS\To\HtomS,
&
\Pot_{\dDomS,\HoomT}
&:\dDomS\To\HoomT.
\end{align*}
\end{theo}

\begin{proof}
Let, e.g., $\bfE\in\symRomT$. Then 
$$\symRot\bfE\in\dDzomS=\symRot\HoomT$$
with linear and continuous potential operator $\Pot_{\symRotT}:\dDzomS\To\HoomT$ 
by Theorem \ref{maintheoregpot}.
Thus, there is $\tilde\bfE:=\Pot_{\symRotT}\symRot\bfE\in\HoomT$ 
depending linearly and continuously on $\bfE$ with $\symRot\tilde\bfE=\symRot\bfE$. Hence, 
$$\bfE-\tilde\bfE\in\symRzomT=\devGrad\hoom$$
with linear and continuous potential operator $\Pot_{\devGrad}:\symRzomT\To\hoom$ 
by Theorem \ref{maintheoregpot}.
Hence, there exists $\vecv:=\Pot_{\devGrad}(\bfE-\tilde\bfE)\in\hoom$ 
with $\devGrad\vecv=\bfE-\tilde\bfE$ and $\vecv$ depends linearly and continuously on $\bfE$.
The other assertions are proved analogously.
\end{proof}

Looking at the latter proof we see that the regular potential operators are given by
\begin{align}
\mylabel{PPdef}
\begin{split}
\Pot_{\RcomS,\HocomS}
=\Pot_{\RotcS}\Rot
&:\RcomS\To\HocomS,\\
\Pot_{\RcomS,\htcom}
=\Pot_{\Gradgradc}(1-\Pot_{\RotcS}\Rot)
&:\RcomS\To\htcom,\\
\Pot_{\DcomT,\HocomT}
=\Pot_{\DivcT}\Div
&:\DcomT\To\HocomT,\\
\Pot_{\DcomT,\HocomS}
=\Pot_{\RotcS}(1-\Pot_{\DivcT}\Div)
&:\DcomT\To\HocomS,\\
\Pot_{\symRomT,\HoomT}
=\Pot_{\symRotT}\symRot
&:\symRomT\To\HoomT,\\
\Pot_{\symRomT,\hoom}
=\Pot_{\devGrad}(1-\Pot_{\symRotT}\symRot)
&:\symRomT\To\hoom,\\
\Pot_{\dDomS,\HtomS}
=\Pot_{\divDivS}\divDiv
&:\dDomS\To\HtomS,\\
\Pot_{\dDomS,\HoomT}
=\Pot_{\symRotT}(1-\Pot_{\divDivS}\divDiv)
&:\dDomS\To\HoomT.
\end{split}
\end{align}
Hence the regular decompositions of Theorem \ref{regdecotheo} 
can be slightly refined to even direct regular decompositions.

\begin{cor} 
\label{regdecotheocor}
Let $\om$ be additionally topologically trivial. 
Then the direct regular decompositions
\begin{align*}
\RcomS
&=\Pot_{\RotcS}\DczomT\dotplus\RczomS,
&
\Pot_{\RotcS}\DczomT
&\subset\HocomS,\\
\DcomT
&=\Pot_{\DivcT}\RTz^{\bot_{\ltom}}\dotplus\DczomT,
&
\Pot_{\DivcT}\RTz^{\bot_{\ltom}}
&\subset\HocomT,\\
\symRomT
&=\Pot_{\symRotT}\dDzomS\dotplus\symRzomT,
&
\Pot_{\symRotT}\dDzomS
&\subset\HoomT,\\
\dDomS
&=\Pot_{\divDivS}\ltom\dotplus\dDzomS,
&
\Pot_{\divDivS}\ltom
&\subset\HtomS
\end{align*}
hold. More precisely
\begin{align*}
\RcomS
&=\Pot_{\RotcS}\DczomT\dotplus\Gradgrad\Pot_{\Gradgradc}\RczomS,\\
\DcomT
&=\Pot_{\DivcT}\RTz^{\bot_{\ltom}}\dotplus\Rot\Pot_{\RotcS}\DczomT,\\
\symRomT
&=\Pot_{\symRotT}\dDzomS\dotplus\devGrad\Pot_{\devGrad}\symRzomT,\\
\dDomS
&=\Pot_{\divDivS}\ltom\dotplus\symRot\Pot_{\symRotT}\dDzomS
\end{align*}
with
\begin{align*}
\Pot_{\Gradgradc}\RczomS
&\subset\htcom,
&
\Pot_{\devGrad}\symRzomT
&\subset\hoom,\\
\Pot_{\RotcS}\DczomT
&\subset\HocomS,
&
\Pot_{\symRotT}\dDzomS
&\subset\HoomT.
\end{align*}
\end{cor}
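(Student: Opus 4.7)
The plan is to refine each additive decomposition of Theorem \ref{regdecotheo} to a direct one by making canonical choices of the two summands through the potential operators from Theorem \ref{maintheoregpot}, whose explicit forms are already recorded in \eqref{PPdef}. The crucial property I will exploit is that each of these potential operators is a linear right inverse of the corresponding differential operator on its (closed) range.

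For the decomposition of $\RcomS$, I would take $\bfM\in\RcomS$, note that $\Rot\bfM\in\DczomT$ by the complex property $\DivcT\RotcS=0$ from Lemma \ref{complexgradGradRotlem}, and define $\widetilde\bfM:=\Pot_{\RotcS}\Rot\bfM\in\HocomS$ via Theorem \ref{maintheoregpot}(ii). By construction $\Rot\widetilde\bfM=\Rot\bfM$, so $\bfM-\widetilde\bfM\in\RczomS$, giving $\bfM\in\Pot_{\RotcS}\DczomT+\RczomS$ with continuous dependence on $\bfM$. Directness follows from the identity $\Rot\circ\Pot_{\RotcS}=\mathrm{id}|_{\DczomT}$: any $\bfN=\Pot_{\RotcS}\bfE$ lying in the intersection $\Pot_{\RotcS}\DczomT\cap\RczomS$ must satisfy $\bfE=\Rot\bfN=0$, hence $\bfN=0$. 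The refined form $\RczomS=\Gradgrad\Pot_{\Gradgradc}\RczomS$ is just Theorem \ref{maintheoregpot}(i) rewritten using $\Gradgrad\circ\Pot_{\Gradgradc}=\mathrm{id}|_{\RczomS}$.

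The three remaining decompositions follow by the same scheme. For $\DcomT$, I would use $\Div$ (which maps $\DcomT$ into $\RTz^{\bot_{\ltom}}$ by Lemma \ref{complexgradGradRotlem}) together with $\Pot_{\DivcT}$, and rewrite the kernel piece as $\DczomT=\Rot\Pot_{\RotcS}\DczomT$ via Theorem \ref{maintheoregpot}(ii). For $\symRomT$, I would use $\symRot$ (whose image lies in $\dDzomS$ by Lemma \ref{complexgradGradRotlem}) and $\Pot_{\symRotT}$, and rewrite $\symRzomT=\devGrad\Pot_{\devGrad}\symRzomT$ via Theorem \ref{maintheoregpot}(iv). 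For $\dDomS$, I would use $\divDiv$ and $\Pot_{\divDivS}$, and rewrite $\dDzomS=\symRot\Pot_{\symRotT}\dDzomS$ via Theorem \ref{maintheoregpot}(v). In each case the relevant complex property ensures that the derivative of an element of the outer space lands in the domain of the chosen potential, and directness is immediate from the right-inverse identity on the input space, exactly as in the $\RcomS$ case.

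Since all heavy lifting (construction of the regular potentials with $\ho$-regularity, continuity estimates, and the right-inverse identities) was already carried out in Theorem \ref{maintheoregpot}, I do not anticipate a serious obstacle. The only additional minor verifications needed are the inclusions $\HocomS\subset\RcomS$, $\HocomT\subset\DcomT$, $\HoomT\subset\symRomT$, and $\HtomS\subset\dDomS$, which are immediate by density of smooth symmetric resp. trace-free test fields, together with continuity of the resulting decomposition operators, which follows automatically since they are compositions of continuous linear maps.
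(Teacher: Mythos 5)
Your proposal is correct and follows exactly the paper's approach: the existence of the decomposition is precisely what Theorem \ref{regdecotheo} (together with the explicit potential operators in \eqref{PPdef}) provides, and your directness argument — take an element in the intersection, apply the differential operator, and use that the potential is a right inverse to conclude it is zero — is verbatim the paper's own argument for the corollary.
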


Here, $\dotplus$ denotes the direct sum.

\begin{proof}
For $\bfM\in\RczomS\cap\Pot_{\RotcS}\DczomT$ we have
$\bfM=\Pot_{\RotcS}\bfE$ with some $\bfE\in\DczomT$.
Thus $0=\Rot\bfM=\bfE$ showing $\bfM=0$ and hence the directness of the first regular decomposition.
The directness of the others follows similarly.
\end{proof}

\subsection{General Bounded Strong Lipschitz Domains}
\mylabel{kernelsecgeneral}

In this section we consider bounded strong Lipschitz domains $\Omega$ 
of general topology and we will extend the results of the previous section as follows. 
The $\Gradgradc$- and the $\divDiv$-complexes remain closed 
and all associated cohomology groups are finite-dimensional.
Moreover, the respective inverse operators are continuous and even compact,
and corresponding Friedrichs/Poincar\'e type estimates hold.
We will show this by verifying the compactness properties of Lemma \ref{compemblem} 
for the various linear operators of the complexes. 
Then Lemma \ref{helmrefined}, Remark \ref{clrangecompembrem}, and Theorem \ref{fatbmaintheogen} 
immediately lead to the desired results.
Using Rellich's selection theorem, we have the following compact embeddings
\begin{align*}
D(\Gradgradc)\cap D(0)
=\htcom&\overset{\text{\sf cpt}}{\hookrightarrow}\ltom,\\
D(\pi_{\mathsf{RT_0}})\cap D(\devGrad) 
=\hoom&\overset{\text{\sf cpt}}{\hookrightarrow}\ltom.
\end{align*}
The two missing compactness results that would immediately lead to the desired results are 
\begin{align}
\mylabel{crucialcptembS}
D(\RotcS)\cap D(\divDivS)
=\RcomS\cap\dDomS&\overset{\text{\sf cpt}}{\hookrightarrow}\ltomS,\\
\mylabel{crucialcptembT}
D(\DivcT) \cap D(\symRotT) 
=\DcomT\cap\symRomT&\overset{\text{\sf cpt}}{\hookrightarrow}\ltomT.
\end{align}

The main aim of this section is to show the compactness of the two crucial embeddings 
\eqref{crucialcptembS} and \eqref{crucialcptembT}.
As a first step we consider a trivial topology.

\begin{lem}
\label{cptemblemtrivtop}
Let $\om$ be additionally topologically trivial.
Then the embeddings \eqref{crucialcptembS}, \eqref{crucialcptembT} are compact.
\end{lem}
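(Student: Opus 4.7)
The plan is to establish both compact embeddings in parallel by a common four-step scheme: an orthogonal Helmholtz splitting of the ambient $L^2$-space into the two relevant kernels, application of the regular potentials from Theorem \ref{maintheoregpot} on each piece, Rellich's theorem, and finally an integration-by-parts identity that upgrades weak to strong $L^2$-convergence of each piece.

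For the first embedding, let $(\bfM_n) \subset \RcomS \cap \dDomS$ be bounded. Using the Helmholtz decomposition $\ltomS = \RczomS \oplus_{\ltomS} \dDzomS$ from Theorem \ref{maintheo}, write $\bfM_n = \bfM_n^{(1)} + \bfM_n^{(2)}$. The complex property in Lemma \ref{complexgradGradRotlem} gives $\divDiv\bfM_n = \divDiv\bfM_n^{(1)}$ and $\Rot\bfM_n = \Rot\bfM_n^{(2)}$, both bounded in $\ltom$. Theorem \ref{maintheoregpot}(i),(v) then delivers regular potentials $u_n := \Pot_{\Gradgradc}\bfM_n^{(1)} \in \htcom$ with $\bfM_n^{(1)} = \Gradgrad u_n$ and $\bfE_n := \Pot_{\symRotT}\bfM_n^{(2)} \in \HoomT$ with $\bfM_n^{(2)} = \symRot\bfE_n$, where $(u_n)$ is bounded in $\htom$ by Lemma \ref{formulasone} and $(\bfE_n)$ is bounded in $\hoom$ by continuity of $\Pot_{\symRotT}$. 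Rellich's theorem produces subsequences with $u_n \to u$ in $\ltom$ and $\bfE_n \to \bfE$ in $\Ltom$.

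To promote the weak $L^2$-convergence of $\bfM_n^{(i)}$ to strong convergence, I would invoke the adjoint relations of Lemma \ref{adjoints}: since $\bfM_n^{(1)} \in \dDomS$ (because $\divDiv\bfM_n^{(1)} = \divDiv\bfM_n \in \ltom$) and $\bfM_n^{(2)} \in \RcomS$ (because $\bfM_n^{(1)} \in \RczomS \subset \RcomS$), we have
\[
\|\bfM_n^{(1)}\|_{\ltomS}^2 = \langle u_n, \divDiv\bfM_n\rangle_{\ltom}, \qquad
\|\bfM_n^{(2)}\|_{\ltomS}^2 = \langle\bfE_n, \Rot\bfM_n\rangle_{\Ltom}.
\]
Along further subsequences, $\divDiv\bfM_n$ and $\Rot\bfM_n$ converge weakly in $\ltom$ respectively $\Ltom$; combined with the strong convergence of $u_n$ and $\bfE_n$ this yields convergence of both right-hand sides. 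The closedness of $\Gradgradc$, $\RotcS$, $\divDivS$, $\symRotT$ together with continuity of the potentials and uniqueness of weak limits identifies the limits as $\|\bfM^{(i)}\|_{\ltomS}^2$. Norm convergence plus weak convergence forces $\bfM_n^{(i)} \to \bfM^{(i)}$ strongly in $\ltomS$, hence $\bfM_n \to \bfM$ strongly.

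The second embedding $\DcomT \cap \symRomT \hookrightarrow \ltomT$ follows by the same scheme: split via $\ltomT = \DczomT \oplus_{\ltomT} \symRzomT$; represent $\bfE_n^{(1)} = \Rot\bfM_n$ with $\bfM_n \in \HocomS$ and $\bfE_n^{(2)} = \devGrad\vecv_n$ with $\vecv_n \in \hoom \cap \RTz^{\bot_{\ltom}}$ by Theorem \ref{maintheoregpot}(ii),(iv) (using Lemma \ref{kerneldevGrad}(iv) for the $\hoom$-bound on $\vecv_n$); extract Rellich subsequences converging in $\Ltom$ respectively $\ltom$; and close via
\[
\|\bfE_n^{(1)}\|_{\ltomT}^2 = \langle\bfM_n, \symRot\bfE_n\rangle_{\ltomS}, \qquad
\|\bfE_n^{(2)}\|_{\ltomT}^2 = -\langle\vecv_n, \Div\bfE_n\rangle_{\ltom},
\]
where the second identity uses $\tr\bfE_n^{(2)} = 0$ so that $\langle\devGrad\vecv_n, \bfE_n^{(2)}\rangle = \langle\Grad\vecv_n, \bfE_n^{(2)}\rangle$ before standard integration by parts. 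The main technical obstacle is to verify that each decomposition piece sits in the graph domain of the operator appearing in the adjoint pairing (so that integration by parts is legitimate); this is exactly what the complex property of Lemma \ref{complexgradGradRotlem} delivers. Topological triviality of $\om$ enters only through Theorem \ref{maintheoregpot}, which supplies the continuous $\hoom$- or $\htom$-valued regular potentials and the orthogonal Helmholtz decompositions that drive the whole argument.
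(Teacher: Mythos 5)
Your argument reproduces the paper's proof of this lemma essentially verbatim: the same orthogonal Helmholtz splits of $\bfM_{n}$ resp.\ $\bfE_{n}$ into kernel pieces, the same regular potentials from Theorem \ref{maintheoregpot}, Rellich's theorem, and the same integration-by-parts pairings exploiting the complex property. The only deviation is the final step, where you extract weak subsequential limits and identify them via closedness before invoking norm-plus-weak convergence, whereas the paper closes more directly with a Cauchy-sequence estimate of the form $\normLtom{\bfM_{n,\mathsf{r}}-\bfM_{m,\mathsf{r}}}^{2}\le c\,\normltom{u_{n}-u_{m}}\to 0$, avoiding any limit identification.
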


\begin{proof}
Let $(\bfM_{n})$ be a bounded sequence in $\RcomS\cap\dDomS$.
By Theorem \ref{maintheo}
and Theorem \ref{maintheoregpot} we have
\begin{align*}
\RcomS\cap\dDomS
&=\big(\RczomS\cap\dDomS\big)
\oplus_{\ltomS}
\big(\RcomS\cap\dDzomS\big),\\
\RczomS
&=\Gradgrad\htcom,\\
\dDzomS
&=\symRot\HoomT
\end{align*}
with linear and continuous potential operators.
Therefore, we can decompose
\begin{align*}
\bfM_{n}
=\bfM_{n,\mathsf{r}}+\bfM_{n,\mathsf{d}}
\in\big(\RczomS\cap\dDomS\big)
\oplus_{\ltomS}
\big(\RcomS\cap\dDzomS\big)
\end{align*}
with $\bfM_{n,\mathsf{r}}\in\Gradgrad\htcom\cap\dDomS$,
$\Rot\bfM_{n,\mathsf{d}}=\Rot\bfM_{n}$,
and $\bfM_{n,\mathsf{r}}=\Gradgrad u_{n}$, $u_{n}\in\htcom$,
as well as $\bfM_{n,\mathsf{d}}\in\RcomS\cap\symRot\HoomT$,
$\divDiv\bfM_{n,\mathsf{r}}=\divDiv\bfM_{n}$,
and $\bfM_{n,\mathsf{d}}=\symRot\bfE_{n}$, $\bfE_{n}\in\HoomT$,
and both $u_{n}$ and $\bfE_{n}$ depend continuously on $\bfM_{n}$, i.e.,
\begin{align*}
\norm{u_{n}}_{\htom}
\leq c\,\normLtom{\bfM_{n,\mathsf{r}}}
\leq c\,\normLtom{\bfM_{n}},\qquad
\normHoom{\bfE_{n}}
\leq c\,\normLtom{\bfM_{n,\mathsf{d}}}
\leq c\,\normLtom{\bfM_{n}}.
\end{align*}
By Rellich's selection theorem, there exist subsequences, again denoted by $(u_{n})$ and $(\bfE_{n})$,
such that $(u_{n})$ converges in $\hoom$ and $(\bfE_{n})$ converges in $\Ltom$.
Thus with $\bfM_{n,m}:=\bfM_{n}-\bfM_{m}$, 
and similarly for $\bfM_{n,m,\mathsf{r}}$, $\bfM_{n,m,\mathsf{d}}$, $u_{n,m}$, $\bfE_{n,m}$, we see
\begin{align*}
\normLtom{\bfM_{n,m,\mathsf{r}}}^2
&=\scpLtom{\bfM_{n,m,\mathsf{r}}}{\Gradgrad u_{n,m}}
=\scpltom{\divDiv\bfM_{n,m,\mathsf{r}}}{u_{n,m}}\\
&=\scpltom{\divDiv\bfM_{n,m}}{u_{n,m}}
\leq c\,\normltom{u_{n,m}},\\
\normLtom{\bfM_{n,m,\mathsf{d}}}^2
&=\scpLtom{\bfM_{n,m,\mathsf{d}}}{\symRot\bfE_{n,m}}
=\scpLtom{\Rot\bfM_{n,m,\mathsf{d}}}{\bfE_{n,m}}\\
&=\scpLtom{\Rot\bfM_{n,m}}{\bfE_{n,m}}
\leq c\,\normLtom{\bfE_{n,m}}.
\end{align*}
Hence, $(\bfM_{n})$ is a Cauchy sequence in $\ltomS$. So 
\begin{align*}
\RcomS\cap\dDomS\hookrightarrow\ltomS
\end{align*}
is compact. To show the second compact embedding, let 
$(\bfE_{n})\subset\symRomT\cap\DcomT$ be a bounded sequence.
By Theorem \ref{maintheo} and Theorem \ref{maintheoregpot} we have
\begin{align*}
\symRomT\cap\DcomT
&=\big(\symRzomT\cap\DcomT\big)
\oplus_{\ltomT}
\big(\symRomT\cap\DczomT\big),\\
\symRzomT
&=\devGrad\hoom,\\
\DczomT
&=\Rot\HocomS
\end{align*}
with linear and continuous potential operators.
Therefore, we can decompose
\begin{align*}
\bfE_{n}
=\bfE_{n,\mathsf{r}}+\bfE_{n,\mathsf{d}}
\in\big(\symRzomT\cap\DcomT\big)
\oplus_{\ltomT}
\big(\symRomT\cap\DczomT\big)
\end{align*}
with $\bfE_{n,\mathsf{r}}\in\devGrad\hoom\cap\DcomT$,
$\symRot\bfE_{n,\mathsf{d}}=\symRot\bfE_{n}$,
$\bfE_{n,\mathsf{r}}=\devGrad\vecv_{n}$, $\vecv_{n}\in\hoom$,
as well as $\bfE_{n,\mathsf{d}}\in\symRomT\cap\Rot\HocomS$,
$\Div\bfE_{n,\mathsf{r}}=\Div\bfE_{n}$,
and $\bfE_{n,\mathsf{d}}=\Rot\bfM_{n}$, $\bfM_{n}\in\HocomS$,
and both $\vecv_{n}$ and $\bfM_{n}$ depend continuously on $\bfE_{n}$, i.e.,
\begin{align*}
\normhoom{\vecv_{n}}
\leq c\,\normLtom{\bfE_{n,\mathsf{r}}}
\leq c\,\normLtom{\bfE_{n}},\qquad
\norm{\bfM_{n}}_{\Hoom}
\leq c\,\normLtom{\bfE_{n,\mathsf{d}}}
\leq c\,\normLtom{\bfE_{n}}.
\end{align*}
By Rellich's selection theorem,
there exist subsequences, again denoted by $(\vecv_{n})$ and $(\bfM_{n})$,
such that $(\vecv_{n})$ converges in $\ltom$ and $(\bfM_{n})$ converges in $\Ltom$.
Thus with $\bfE_{n,m}:=\bfE_{n}-\bfE_{m}$, 
and similarly for $\bfE_{n,m,\mathsf{r}}$, $\bfE_{n,m,\mathsf{d}}$, $\vecv_{n,m}$, $\bfM_{n,m}$, we see
\begin{align*}
\normLtom{\bfE_{n,m,\mathsf{r}}}^2
&=\scpLtom{\bfE_{n,m,\mathsf{r}}}{\devGrad\vecv_{n,m}}
=-\scpltom{\Div\bfE_{n,m,\mathsf{r}}}{\vecv_{n,m}}\\
&=-\scpltom{\Div\bfE_{n,m}}{\vecv_{n,m}}
\leq c\,\normltom{\vecv_{n,m}},\\
\normLtom{\bfE_{n,m,\mathsf{d}}}^2
&=\scpLtom{\bfE_{n,m,\mathsf{d}}}{\Rot\bfM_{n,m}}
=\scpLtom{\symRot\bfE_{n,m,\mathsf{d}}}{\bfM_{n,m}}\\
&=\scpLtom{\symRot\bfE_{n,m}}{\bfM_{n,m}}
\leq c\,\normLtom{\bfM_{n,m}}.
\end{align*}
Note, that here the symmetry of $\bfM_{n,m}$ is crucial.
Finally, $(\bfE_{n})$ is a Cauchy sequence in $\ltomT$. So 
\begin{align*}
\symRomT\cap\DcomT\hookrightarrow\ltomT
\end{align*}
is compact.
\end{proof}

For general topologies we will use a partition of unity argument. 
The next lemma, which we will prove in the Appendix, provides the necessary tools for this.

\begin{lem}
\mylabel{comfctlem}
Let $\varphi\in\Cic(\rt)$.
\begin{itemize}
\item[\bf(i)]
If $\bfM\in\Rcom$ resp. $\RcomS$ resp. $\RcomT$, 
then $\varphi\bfM\in\Rcom$ resp. $\RcomS$ resp. $\RcomT$ and 
\begin{align}
\mylabel{RotphiM}
\Rot(\varphi\bfM)
=\varphi\Rot\bfM+\grad\varphi\times\bfM.
\end{align}
\item[\bf(ii)]
If $\bfM\in\Rom$ resp. $\RomS$ resp. $\RomT$, 
then $\varphi\bfM\in\Rom$ resp. $\RomS$ resp. $\RomT$ and 
\eqref{RotphiM} holds.
\item[\bf(iii)]
If $\bfE\in\Dcom$ resp. $\DcomT$ resp. $\DcomS$, 
then $\varphi\bfE\in\Dcom$ resp. $\DcomT$ resp. $\DcomS$ and 
\begin{align}
\mylabel{DivphiM}
\Div(\varphi\bfE)
=\varphi\Div\bfE+\grad\varphi\cdot\bfE.
\end{align}
\item[\bf(iv)]
If $\bfE\in\Dom$ resp. $\DomT$ resp. $\DomS$, 
then $\varphi\bfE\in\Dom$ resp. $\DomT$ resp. $\DomS$ and 
\eqref{DivphiM} holds.
\item[\bf(v)]
If $\bfE\in\symRomT$, then $\varphi\bfE\in\symRomT$ and 
$$\symRot(\varphi\bfE)
=\varphi\symRot\bfE
+\sym(\grad\varphi\times\bfE).$$
\item[\bf(vi)]
If $\bfM\in\dDomS$, then $\varphi\bfM\in\dDzmoomS$ and 
$$\divDiv(\varphi\bfM)
=\varphi\divDiv\bfM
+2\grad\varphi\cdot\Div\bfM
+\tr(\bfM\Gradgrad\varphi).$$
\end{itemize}
By mollifying these formulas extend to $\varphi\in\mathring{\mathsf{C}}^{0,1}(\rt)$
resp. $\varphi\in\mathring{\mathsf{C}}^{1,1}(\rt)$.
\end{lem}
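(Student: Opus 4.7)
The plan is to prove all six parts by first verifying the Leibniz-type identities for smooth compactly supported fields, where they reduce to classical product rules of calculus applied row-wise, and then to extend to the respective Sobolev spaces by density or mollification. Throughout, the algebraic structure (symmetry, skew-symmetry, trace-freeness) is preserved because $\varphi$ is a scalar and therefore commutes with $\sym$, $\skw$, $\dev$, and $\tr$; in particular, if $\bfM\in\ltomS$ (resp.\ $\ltomT$) then $\varphi\bfM\in\ltomS$ (resp.\ $\ltomT$), and the same holds for the respective smooth subspaces.

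For (i)--(iv), the product formulas $\Rot(\varphi\bfM)=\varphi\Rot\bfM+\grad\varphi\times\bfM$ and $\Div(\varphi\bfE)=\varphi\Div\bfE+\grad\varphi\cdot\bfE$ are immediate for $\bfM,\bfE\in\Cicom$. For fields with homogeneous boundary conditions $\bfM\in\Rcom$ (resp.\ $\bfE\in\Dcom$), pick an approximating sequence $(\bfM_n)\subset\Cicom$ in the graph norm of $\Rcom$; then $\varphi\bfM_n\in\Cicom$, the identity holds for each $\bfM_n$, and both sides converge in $\ltom$ (using boundedness of $\varphi$ and $\grad\varphi$). Thus $\varphi\bfM\in\Rcom$ with the stated formula, and the symmetric/deviatoric variants follow by applying $\sym$ or $\dev$ to the sequence, which preserves closure. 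For fields without boundary conditions $\bfM\in\Rom$ (resp.\ $\bfE\in\Dom$), approximate instead by standard mollification restricted to compact subsets and pass to the limit in the same way, or verify the identity directly in the sense of distributions by testing against $\vecv\in\cicom$ and using the classical product rule on the test field.

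For (v), start from the identity in (ii), which yields $\Rot(\varphi\bfE)=\varphi\Rot\bfE+\grad\varphi\times\bfE$ for $\bfE\in\symRomT\subset\Rom$. Taking the symmetric part and using that $\varphi$ is scalar gives $\symRot(\varphi\bfE)=\varphi\symRot\bfE+\sym(\grad\varphi\times\bfE)\in\ltomS$, and since $\tr(\varphi\bfE)=\varphi\tr\bfE=0$, we have $\varphi\bfE\in\symRomT$.

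The main obstacle is (vi), because $\divDiv$ is second order and $\bfM\in\dDomS$ does not give $\Div\bfM\in\ltom$, only $\Div\bfM\in\hmoom$ row-wise (since $\bfM\in\ltom$). The plan is to interpret all three terms on the right as distributions of order at most one: $\varphi\divDiv\bfM\in\ltom$, $\tr(\bfM\Gradgrad\varphi)\in\ltom$ by boundedness of $\Gradgrad\varphi$, and $\grad\varphi\cdot\Div\bfM\in\hmoom$ because multiplication by a bounded $\mathring{\mathsf{C}}^{1,1}$ function preserves $\hmoom$ via its dual action on $\hocom$. I would then verify the identity distributionally: for $\psi\in\cicom$, compute
\begin{align*}
\scp{\divDiv(\varphi\bfM)}{\psi}_{\ltom}
&=\scpLtom{\varphi\bfM}{\Gradgrad\psi}
=\scpLtom{\bfM}{\varphi\Gradgrad\psi}
\end{align*}
and use the pointwise identity $\varphi\Gradgrad\psi=\Gradgrad(\varphi\psi)-2\sym(\grad\varphi\otimes\grad\psi)-\psi\Gradgrad\varphi$ together with the symmetry of $\bfM$ to reorganise into the three claimed terms, applying the duality $\scp{\Div\bfM}{\grad\varphi\,\psi}_{\hmoom}=\scpLtom{\bfM}{\Grad(\grad\varphi\,\psi)}$ to identify the middle contribution. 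This confirms $\varphi\bfM\in\dDzmoomS$ with the stated product rule. The final sentence extends to $\mathring{\mathsf{C}}^{0,1}$ (resp.\ $\mathring{\mathsf{C}}^{1,1}$) $\varphi$ by mollifying $\varphi$, using that the formulas involve at most first (resp.\ second) order derivatives of $\varphi$ which converge in $L^\infty$ (resp.\ in an appropriate weak sense sufficient for passing to the distributional limit).
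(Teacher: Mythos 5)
Your approach for parts (i)--(iv) and (vi) is essentially the same as the paper's: smooth product rule plus density for the boundary-condition spaces, and a direct distributional verification (testing and applying the classical product rule to the test field) for the spaces without boundary conditions; for (vi) you reorganize $\varphi\Gradgrad\psi$ into testable pieces, exactly as the paper does.

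However, part (v) has a genuine gap. You write ``for $\bfE\in\symRomT\subset\Rom$'' and then invoke part (ii), but this inclusion is false. The space $\symRomT=D(\symRotT)=D((\RotcS)^{*})$ is defined by requiring only $\symRot\bfE\in\Ltom$, not $\Rot\bfE\in\Ltom$; it is strictly larger than $\RomT$. Indeed, for $\bfE\in\ltomT$ one has (Lemma \ref{appformulasproof}\,(x)) $\skw\Rot\bfE=\tfrac12\spn\Div\bfE^{\top}$, which is an independent quantity not controlled by $\symRot\bfE$, so knowing $\symRot\bfE\in\Ltom$ gives no information about the skew part of $\Rot\bfE$. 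Consequently you cannot apply (ii) to $\bfE$, and the ``take $\sym$ of the $\Rot$-identity'' shortcut does not make sense for general $\bfE\in\symRomT$. The correct route (the paper's) is to verify the identity directly at the level of the weak/adjoint definition: for $\bfPhi\in\Cicom$ compute
\begin{align*}
\scpLtom{\varphi\bfE}{\Rot\sym\bfPhi}
&=\scpLtom{\bfE}{\Rot\sym(\varphi\bfPhi)}-\scpLtom{\bfE}{\grad\varphi\times\sym\bfPhi}
 =\scpLtom{\symRot\bfE}{\varphi\bfPhi}+\scpLtom{\grad\varphi\times\bfE}{\sym\bfPhi},
\end{align*}
using $\bfE\in\symRomT$ and the product rule on $\varphi\bfPhi$; this identifies $\symRot(\varphi\bfE)=\varphi\symRot\bfE+\sym(\grad\varphi\times\bfE)\in\ltomS$ without ever needing $\Rot\bfE$ to be a function. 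The rest of (v), namely $\tr(\varphi\bfE)=\varphi\tr\bfE=0$, is fine.

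One smaller remark on (i)/(iii): you say ``the symmetric/deviatoric variants follow by applying $\sym$ or $\dev$ to the sequence, which preserves closure.'' This is correct, but it is worth being precise that $\RcomS$ is by definition the closure of $\Cicom\cap\ltomS$ in the $\Rom$-norm, so one should approximate $\bfM\in\RcomS$ by $\bfPhi_n\in\Cicom\cap\ltomS$ directly (which exists by definition) rather than first approximating in $\Rcom$ and then symmetrizing; the latter works here only because $\sym$ is an $\Rom$-bounded projection, so it is not wrong, just slightly indirect.
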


Here $\grad\varphi\,\times$ resp. $\grad\varphi\,\cdot$ 
is applied row-wise to a tensor $\bfM$ and we see
$\grad\varphi\cdot\bfM=\bfM\grad\varphi$
as well as $\grad\varphi\times\bfM=-\bfM\spn(\grad\varphi)$. Moreover, we introduce the new space
$$\dDzmoomS 
:=\set{\bfM\in\ltomS}{\divDiv\bfM\in\hmoom}.$$

Another auxiliary result required for the compactness proof is presented in the next lemma.

\begin{lem}
\label{Helmholtz1}
The regular (type) decomposition
$$\dDzmoomS
=\hocom\cdot\bfI
\dotplus
\dDzomS$$
holds. More precisely, for $\bfM\in\dDzmoomS$ 
there are unique $u\in\hocom$ and $\bfM_{0}\in\dDzomS$ such that $\bfM=u\,\bfI+\bfM_{0}$.
The scalar function $u\in\hocom$ is given as the unique solution of the Dirichlet-Poisson problem
$$\scpltom{\grad u}{\grad\varphi}
=-\scphmoom{\divDiv\bfM}{\varphi}
\quad\text{for all}\quad 
\varphi\in\hocom,$$
and the decomposition is continuous, more precisely there exists $c>0$, such that
\begin{align*}
\normhoom{u}
&\leq c\,\normhmoom{\divDiv\bfM},
&
\normltom{\bfM-u\,\bfI}
&\leq c\,\norm{\bfM}_{\dDzmoomS}.
\end{align*}
\end{lem}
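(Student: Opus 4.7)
The plan is to define $u$ as the unique solution of the variational Dirichlet-Poisson problem and then verify that the residual $\bfM_{0}:=\bfM-u\,\bfI$ lies in $\dDzomS$. The key algebraic identity driving everything is $\divDiv(u\,\bfI)=\Delta u$, which follows from Lemma \ref{formulas}(iii) since $\Div(u\,\bfI)=\grad u$.

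First, I would observe that $\varphi\mapsto-\scphmoom{\divDiv\bfM}{\varphi}$ is a bounded linear functional on $\hocom$ with norm controlled by $\normhmoom{\divDiv\bfM}$, while the bilinear form $(\grad\,\cdot\,,\grad\,\cdot\,)_{\ltom}$ is coercive on $\hocom$ by the Friedrichs estimate of Lemma \ref{poincaregradrotdiv}. Hence the Lax--Milgram lemma yields a unique $u\in\hocom$ satisfying the stated variational equation, together with the estimate $\normhoom{u}\leq c\,\normhmoom{\divDiv\bfM}$.

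Next, I would show $\bfM_{0}=\bfM-u\,\bfI\in\dDzomS$. Clearly $\bfM_{0}\in\ltomS$. For the distributional double divergence, pick any $\varphi\in\cicom$; using $\bfI:\Gradgrad\varphi=\tr\Gradgrad\varphi=\Delta\varphi$ and integration by parts in $\hocom$, we compute
\begin{align*}
\scphmoom{\divDiv\bfM_{0}}{\varphi}
&=\scpLtom{\bfM}{\Gradgrad\varphi}-\scpltom{u}{\Delta\varphi}\\
&=\scphmoom{\divDiv\bfM}{\varphi}+\scpltom{\grad u}{\grad\varphi}
=0
\end{align*}
by the defining equation of $u$. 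Thus $\divDiv\bfM_{0}=0$ as a distribution, so $\bfM_{0}\in\dDzomS$, giving the decomposition $\bfM=u\,\bfI+\bfM_{0}$.

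Finally, I would establish directness and continuity. If $v\,\bfI=\bfM_{0}\in\hocom\cdot\bfI\cap\dDzomS$ for some $v\in\hocom$, then $\Delta v=\divDiv(v\,\bfI)=0$, and testing with $v$ yields $\normltom{\grad v}=0$, so $v=0$. This shows the sum is direct. The continuity of the decomposition follows immediately: $\normhoom{u}\leq c\,\normhmoom{\divDiv\bfM}$ from Lax--Milgram, and then
$$\normltom{\bfM-u\,\bfI}\leq\normltom{\bfM}+\sqrt{3}\,\normltom{u}\leq c\,\norm{\bfM}_{\dDzmoomS}.$$
There is no real obstacle here; the only point requiring slight care is the distributional computation above, where one must track that $\divDiv\bfM$ is interpreted as an element of $\hmoom$ (not merely a distribution) so that the duality pairing and the defining variational identity match consistently.
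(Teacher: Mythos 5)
Your proof is correct and follows essentially the same approach as the paper: define $u$ by the Dirichlet--Poisson problem via Lax--Milgram, note $\divDiv(u\,\bfI)=\Delta u$, conclude $\bfM_0=\bfM-u\,\bfI\in\dDzomS$, and check directness by the same testing-with-$v$ argument. You spell out the distributional verification more explicitly (and carry the $\sqrt{3}$ factor correctly in the continuity bound), but the substance matches the paper's proof.
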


\begin{proof}
The unique solution $u\in\hocom$ satisfies
$$\hmoom\ni\divDiv u\,\bfI=\div\grad u=\divDiv\bfM,$$
i.e., $\bfM_{0}:=\bfM-u\,\bfI\in\dDzomS$, which shows the decomposition.
Moreover, 
$$\normhoom{u}\leq(1+c_{\mathsf{g}}^2)^{\oh}\,\normhmoom{\divDiv\bfM}$$
shows, that $u$ depends continuously on $\bfM$ and hence also $\bfM_{0}$ since 
$$\normltom{\bfM_{0}}
\leq\normLtom{\bfM}+\normltom{u}
\leq(2+c_{\mathsf{g}}^2)^{\oh}\,\norm{\bfM}_{\dDzmoomS}.$$
Let $u\,\bfI\in\dDzomS$ with $u\in\hocom$.
Then $0=\divDiv u\,\bfI=\div\grad u=\Delta u$, yielding $u=0$.
Hence, the decomposition is direct, completing the proof. 
\end{proof}

\begin{lem}
\label{cptemblem}
The embeddings \eqref{crucialcptembS} and \eqref{crucialcptembT} are compact, i.e.,
\begin{align*}
\RcomS\cap\dDomS
\overset{\text{\sf cpt}}{\hookrightarrow}\ltomS,\qquad
\symRomT\cap\DcomT
\overset{\text{\sf cpt}}{\hookrightarrow}\ltomT.
\end{align*}
\end{lem}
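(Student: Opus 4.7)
The plan is to reduce to the topologically trivial situation already handled in Lemma \ref{cptemblemtrivtop} by a partition of unity argument, with Lemma \ref{Helmholtz1} supplying the key device for absorbing the loss of regularity caused by multiplication with cut-offs. Cover $\overline{\om}$ by finitely many bounded open sets $U_{i}$, $i = 1, \dots, K$, such that each $\om_{i} := U_{i} \cap \om$ is a bounded strong Lipschitz domain that is in addition topologically trivial; this is possible because $\om$ itself is strong Lipschitz (near a boundary point one can arrange $\om_{i}$ to be a Lipschitz hypograph, and the interior can be covered by small balls). Let $(\varphi_{i}) \subset \Cic(\rt)$ be a subordinate partition of unity with $\sum_{i} \varphi_{i} = 1$ in a neighborhood of $\overline{\om}$.

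For the first embedding, let $(\bfM_{n}) \subset \RcomS \cap \dDomS$ be bounded. By Lemma \ref{comfctlem} (i) and (vi), each $\varphi_{i} \bfM_{n}$, considered over $\om_{i}$, lies in $\RcomiS \cap \dDzmoomiS$ with norm bounded uniformly in $n$. The stronger membership in $\H_{\bbS}(\divDiv,\om_{i})$ is \emph{not} available, because
$$\divDiv(\varphi_{i} \bfM_{n}) = \varphi_{i} \divDiv \bfM_{n} + 2 \grad \varphi_{i} \cdot \Div \bfM_{n} + \tr(\bfM_{n} \Gradgrad \varphi_{i})$$
contains the uncontrolled term $\Div \bfM_{n}$ and is only bounded in $\mathring{\mathsf{H}}^{-1}(\om_{i})$. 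This is precisely where Lemma \ref{Helmholtz1}, applied on the topologically trivial $\om_{i}$, enters: it yields a splitting $\varphi_{i} \bfM_{n} = u_{i,n}\,\bfI + \bfM_{i,n}^{0}$ with $u_{i,n} \in \hoc(\om_{i})$ and $\bfM_{i,n}^{0} \in \dDzomiS$, both uniformly bounded in $n$.

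Rellich's selection theorem extracts a subsequence with $u_{i,n}$ convergent in $\mathsf{L}^{2}(\om_{i})$. The residual $\bfM_{i,n}^{0} = \varphi_{i} \bfM_{n} - u_{i,n}\,\bfI$ belongs to $\RcomiS \cap \dDzomiS$, which sits inside the analogue of $\RcomS \cap \dDomS$ over $\om_{i}$, and
$$\Rot \bfM_{i,n}^{0} = \varphi_{i} \Rot \bfM_{n} + \grad \varphi_{i} \times \bfM_{n} + \spn \grad u_{i,n}$$
is uniformly bounded by Lemma \ref{formulas} (iii). Since $\om_{i}$ is topologically trivial, Lemma \ref{cptemblemtrivtop} applies there and furnishes a further convergent subsequence of $(\bfM_{i,n}^{0})$ in $\mathsf{L}^{2}(\om_{i})$. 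A diagonal argument over $i = 1, \dots, K$ then produces a single subsequence along which $\bfM_{n} = \sum_{i} \varphi_{i} \bfM_{n}$ is Cauchy in $\ltomS$.

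For the second embedding the argument is strictly simpler: by Lemma \ref{comfctlem} (iii) and (v), $(\varphi_{i} \bfE_{n})$ stays bounded in $\DcomiT \cap \symRomiT$ with no loss of regularity (the product rule for $\Div$ only costs a zero-order term in $\bfE_{n}$). Lemma \ref{cptemblemtrivtop} applied directly on each topologically trivial $\om_{i}$, combined with the same diagonal-and-sum step, delivers convergence of $(\bfE_{n})$ in $\ltomT$. The main obstacle is therefore the first embedding and specifically the fact that $\varphi_{i} \cdot \dDomS$ is not contained in $\H_{\bbS}(\divDiv,\om_{i})$ but only in the weaker $\dDzmoomiS$; Lemma \ref{Helmholtz1} was prepared in advance precisely to peel off the regularity-offending scalar-identity part $u \,\bfI$, leaving a genuine $\divDiv$-kernel remainder on which the already-established topologically trivial compactness result can be invoked.
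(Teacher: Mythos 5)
Your proof is correct and follows essentially the same route as the paper: the same partition-of-unity reduction to the topologically trivial subdomains $\om_i$, the same invocation of Lemma \ref{comfctlem} to localize, the same application of Lemma \ref{Helmholtz1} to split off the $u_{i,n}\,\bfI$ part on which Rellich applies (while the $\dDzomiS$-remainder is handled by Lemma \ref{cptemblemtrivtop}), and the same successive-subsequence extraction. The only difference from the paper's write-up is cosmetic: you make the $\Rot$-boundedness of $\bfM_{i,n}^{0}$ explicit via Lemma \ref{formulas}\,(iii), whereas the paper leaves that step implicit.
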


\begin{proof}
Let $(U_{i})$ be an open covering of $\ovl{\om}$, such that $\om_{i}:=\om\cap U_{i}$
is topologically trivial for all $i$. As $\ovl{\om}$ is compact, there is a finite subcovering
denoted by $(U_{i})_{i=1,\ldots,I}$ with $I\in\n$. Let $(\varphi_{i})$ 
with $\varphi_{i}\in\cic(U_{i})$ be a partition of unity subordinate to $(U_{i})$.
Suppose $(\bfE_{n})\subset\symRomT\cap\DcomT$ is a bounded sequence.
Then $\bfE_{n}=\sum_{i=1}^{I}\varphi_{i}\bfE_{n}$
and $(\varphi_{i}\bfE_{n})\subset\symRomiT\cap\DcomiT$ is a bounded sequence
for all $i$ by Lemma \ref{comfctlem}. 
As $\om_{i}$ is topologically trivial, there exists a subsequence, again denoted by
$(\varphi_{i}\bfE_{n})$, which is a Cauchy sequence in $\lt(\om_{i})$ by Lemma \ref{cptemblemtrivtop}.
Picking successively subsequences yields that 
$(\varphi_{i}\bfE_{n})$ is a Cauchy sequence in $\lt(\om_{j})$ for all $j$.
Hence $(\bfE_{n})$ is a Cauchy sequence in $\Ltom$. So the second embedding of the lemma is compact.
Let $(\bfM_{n})\subset\RcomS\cap\dDomS$ be a bounded sequence.
Then $\bfM_{n}=\sum_{i=1}^{I}\varphi_{i}\bfM_{n}$
and $(\varphi_{i}\bfM_{n})\subset\RcomiS\cap\dDzmoomiS$ is a bounded sequence
for all $i$ by Lemma \ref{comfctlem} as $\norm{\Div\bfM_{n}}_{\Hmoom}\leq\normLtom{\bfM_{n}}$. 
Using Lemma \ref{Helmholtz1} we decompose 
$$\varphi_{i}\bfM_{n}=u_{i,n}\,\bfI+\bfM_{0,i,n}
\in\hoc(\om_{i})\cdot\bfI\dotplus\big(\RcomiS\cap\dDzomiS\big).$$
Moreover, $(u_{i,n})$ is bounded in $\hoc(\om_{i})$ 
and $(\bfM_{0,i,n})$ is bounded in $\RcomiS\cap\dDzomiS$.
By Rellich's selection theorem and Lemma \ref{cptemblemtrivtop}
as well as picking successively subsequences we get that 
$(\varphi_{i}\bfM_{n})$ is a Cauchy sequence in $\lt(\om_{j})$ for all $j$.
Hence $(\bfM_{n})$ is a Cauchy sequence in $\Ltom$, showing that the first embedding of the lemma 
is also compact and finishing the proof.
\end{proof}

Utilizing the crucial compact embeddings of Lemma \ref{cptemblem},
we can apply the functional analysis toolbox Section \ref{fuanasec}
to the (linear, densely defined, and closed `complex') operators $\Az$, $\Ao$, $\At$, $\Azs$, $\Aos$, $\Ats$.
In this general case the reduced operators are
\begin{align*}
\cAz=\Gradgradc:\htcom\subset\ltom
&\To\ovl{\Gradgrad\htcom},\\
\cAo=\RotcS:\RcomS\cap\ovl{\symRot\symRomT}\subset\ovl{\symRot\symRomT}
&\To\ovl{\Rot\RcomS},\\
\cAt=\DivcT:\DcomT\cap\ovl{\devGrad\hoom}\subset\ovl{\devGrad\hoom}
&\To\RTz^{\bot_{\ltom}},\\
\cAzs=\divDivS:\dDomS\cap\ovl{\Gradgrad\htcom}\subset\ovl{\Gradgrad\htcom}
&\To\ltom,\\
\cAos=\symRotT:\symRomT\cap\ovl{\Rot\RcomS}\subset\ovl{\Rot\RcomS}
&\To\ovl{\symRot\symRomT},\\
\cAts=-\devGrad:\hoom\cap\RTz^{\bot_{\ltom}}\subset\RTz^{\bot_{\ltom}}
&\To\ovl{\devGrad\hoom}
\end{align*}
as 
\begin{align*}
\ovl{\divDiv\dDomS}
&=\ovl{R(\divDivS)}
=N(\Gradgradc)^{\bot_{\ltom}}
=\{0\}^{\bot_{\ltom}}
=\ltom,\\
\ovl{\Div\DcomT}
&=\ovl{R(\DivcT)}
=N(\devGrad)^{\bot_{\ltom}}
=\RTz^{\bot_{\ltom}}.
\end{align*}
Note that by the compact embeddings of Lemma \ref{cptemblem}
all ranges are actually closed and we can skip the closure bars.
We obtain the following theorem.

\begin{theo} 
\label{maintheogendom}
It holds:
\begin{itemize}
\item[\bf(i)]
The ranges
\begin{align*}
R(\Gradgradc)&=\Gradgrad\htcom,\\
\ltom
=R(\divDivS)
&=\divDiv\dDomS
=\divDiv\big(\dDomS\cap\Gradgrad\htcom\big),\\
R(\RotcS)
&=\Rot\RcomS
=\Rot\big(\RcomS\cap\symRot\symRomT\big),\\
R(\symRotT)
&=\symRot\symRomT
=\symRot\big(\symRomT\cap\Rot\RcomS\big),\\
\RTz^{\bot_{\ltom}}
=R(\DivcT)
&=\Div\DcomT
=\Div\big(\DcomT\cap\devGrad\hoom\big),\\
R(\devGrad)
&=\devGrad\hoom
=\devGrad\big(\hoom\cap\RTz^{\bot_{\ltom}}\big)
\end{align*}
are closed. The more regular potentials on the right hand sides 
are uniquely determined and depend linearly and continuously on the data, see (v).
\item[\bf(ii)]
The cohomology groups
\begin{align*}
\harmdSom:=\RczomS\cap\dDzomS,\qquad
\harmnTom:=\DczomT\cap\symRzomT
\end{align*}
are finite dimensional and may be called
symmetric Dirichlet resp. deviatoric Neumann tensor fields.
\item[\bf(iii)]
The Hilbert complexes from Remark \ref{complexGradgrad}, i.e.,
$$\begin{CD}
\{0\} @> 0 >>
\htcom @> \Gradgradc >>
\RcomS @> \RotcS >>
\DcomT @> \DivcT >>
\ltom @> \pi_{\RTz} >>
\RTz
\end{CD}$$
and its adjoint
$$\begin{CD}
\{0\} @< 0 <<
\ltom @< \divDivS <<
\dDomS @< \symRotT << 
\symRomT @< -\devGrad <<
\hoom @< \iota_{\RTz} <<
\RTz
\end{CD},$$
are closed. They are also exact, if and only if
$\harmdSom=\{0\}$, $\harmnTom=\{0\}$.
The latter holds, if $\om$ is topologically trivial.
\item[\bf(iv)]
The Helmholtz type decompositions
\begin{align*}
\ltomS
&=\Gradgrad\htcom
\oplus_{\ltomS}
\dDzomS\\
&=\RczomS
\oplus_{\ltomS}
\symRot\symRomT\\
&=\Gradgrad\htcom
\oplus_{\ltomS}
\harmdSom
\oplus_{\ltomS}
\symRot\symRomT,\\
\ltomT
&=\Rot\RcomS
\oplus_{\ltomT}
\symRzomT\\
&=\DczomT
\oplus_{\ltomT}
\devGrad\hoom\\
&=\Rot\RcomS
\oplus_{\ltomT}
\harmnTom
\oplus_{\ltomT}
\devGrad\hoom
\end{align*}
are valid. 
\item[\bf(v)]
There exist positive constants $c_{\mathsf{Gg}}$, $c_{\mathsf{D}}$, $c_{\mathsf{R}}$,
such that the Friedrichs/Poincar\'e type estimates
\begin{align*}
\forall\,u&\in\htcom
&
\normltom{u}&\leq c_{\mathsf{Gg}}\,\normLtom{\Gradgrad u},\\
\forall\,\bfM&\in\dDomS\cap\Gradgrad\htcom
&
\normLtom{\bfM}&\leq c_{\mathsf{Gg}}\,\normltom{\divDiv\bfM},\\
\forall\,\bfE&\in\DcomT\cap\devGrad\hoom
&
\normLtom{\bfE}&\leq c_{\mathsf{D}}\,\normltom{\Div\bfE},\\
\forall\,\vecv&\in\hoom\cap\RTz^{\bot_{\ltom}}
&
\normltom{\vecv}&\leq c_{\mathsf{D}}\,\normLtom{\devGrad\vecv},\\
\forall\,\bfM&\in\RcomS\cap\symRot\symRomT
&
\normLtom{\bfM}&\leq c_{\mathsf{R}}\,\normLtom{\Rot\bfM},\\
\forall\,\bfE&\in\symRomT\cap\Rot\RcomS
&
\normLtom{\bfE}&\leq c_{\mathsf{R}}\,\normLtom{\symRot\bfE}
\end{align*}
hold\footnote{Note $\Rot\bfM=\dev\Rot\bfM$ for $\bfM\in\RomS$ and thus
for all $\bfM\in\RcomS\cap\symRot\symRomT$
$$\normLtom{\bfM}\leq c_{\mathsf{R}}\,\normLtom{\Rot\bfM}
=c_{\mathsf{R}}\,\normLtom{\dev\Rot\bfM}.$$}.
\item[\bf(vi)]
The inverse operators 
\begin{align*}
(\Gradgradc)^{-1}:
\Gradgrad\htcom&\To\htcom,\\
(\divDivS)^{-1}:
\ltom&\To\dDomS\cap\Gradgrad\htcom,\\
(\DivcT)^{-1}:
\RTz^{\bot_{\ltom}}&\To\DcomT\cap\devGrad\hoom,\\
(\devGrad)^{-1}:
\devGrad\hoom&\To\hoom\cap\RTz^{\bot_{\ltom}},\\
(\RotcS)^{-1}:
\Rot\RcomS&\To\RcomS\cap\symRot\symRomT,\\
(\symRotT)^{-1}:
\symRot\symRomT&\To\symRomT\cap\Rot\RcomS
\end{align*}
are continuous with norms $(1+c_{\mathsf{Gg}}^2)^{\oh}$ resp. $(1+c_{\mathsf{D}}^2)^{\oh}$, resp.
$(1+c_{\mathsf{R}}^2)^{\oh}$, and their modifications
\begin{align*}
(\Gradgradc)^{-1}:
\Gradgrad\htcom&\To\hocom\subset\ltom,\\
(\divDivS)^{-1}:
\ltom&\To\Gradgrad\htcom\subset\ltomS,\\
(\DivcT)^{-1}:
\RTz^{\bot_{\ltom}}&\To\devGrad\hoom\subset\ltomT,\\
(\devGrad)^{-1}:
\devGrad\hoom&\To\RTz^{\bot_{\ltom}}\subset\ltom,\\
(\RotcS)^{-1}:
\Rot\RcomS&\To\symRot\symRomT\subset\ltomS,\\
(\symRotT)^{-1}:
\symRot\symRomT&\To\Rot\RcomS\subset\ltomT
\end{align*}
are compact with norms $c_{\mathsf{Gg}}$, $c_{\mathsf{D}}$, resp. $c_{\mathsf{R}}$.
\end{itemize}
\end{theo}

We note
\begin{align}
\mylabel{RDDDirNeu}
\begin{split}
\RczomS
&=\Gradgrad\htcom
\oplus_{\ltomS}
\harmdSom,\\
\dDzomS
&=\symRot\symRomT
\oplus_{\ltomS}
\harmdSom,\\
\DczomT
&=\Rot\RcomS
\oplus_{\ltomT}
\harmnTom,\\
\symRzomT
&=\devGrad\hoom
\oplus_{\ltomT}
\harmnTom.
\end{split}
\end{align}

Finally, even parts of Theorem \ref{maintheoregpot}, Theorem \ref{regdecotheo}, and Corollary \ref{regdecotheocor}, 
extend to the general case, i.e.,
we have regular potentials and regular decompositions 
for bounded strong Lipschitz domains as well.

\begin{theo} 
\label{regdecotheogen}
The regular decompositions
\begin{align*}
\text{\bf(i)}&
&
\RcomS
&=\HocomS+\Gradgrad\htcom,\\
\text{\bf(ii)}&
&
\DcomT
&=\HocomT+\Rot\HocomS,\\
\text{\bf(iii)}&
&
\symRomT
&=\HoomT+\devGrad\hoom,\\
\text{\bf(iv)}&
&
\dDomS
&=\HtomS+\dDzomS
\hspace*{50mm}
\end{align*}
hold with linear and continuous (regular) decomposition resp. potential operators
\begin{align*}
\Pott_{\RcomS,\HocomS}
&:\RcomS\To\HocomS,\\
\Pott_{\RcomS,\htcom}
&:\RcomS\To\htcom,\\
\Pott_{\DcomT,\HocomT}
&:\DcomT\To\HocomT,\\
\Pott_{\DcomT,\HocomS}
&:\DcomT\To\HocomS,\\
\Pott_{\symRomT,\HoomT}
&:\symRomT\To\HoomT,\\
\Pott_{\symRomT,\hoom}
&:\symRomT\To\hoom,\\
\Pott_{\dDomS,\HtomS}
&:\dDomS\To\HtomS,\\
\Pott_{\dDomS,\dDzomS}
&:\dDomS\To\dDzomS.
\end{align*}
\end{theo}

\begin{proof}
As in the proof of Lemma \ref{cptemblem},
let $(U_{i})$ be an open covering of $\ovl{\om}$, such that $\om_{i}:=\om\cap U_{i}$
is topologically trivial for all $i$. As $\ovl{\om}$ is compact, there is a finite subcovering
denoted by $(U_{i})_{i=1,\ldots,I}$ with $I\in\n$. Let $(\varphi_{i})$ 
with $\varphi_{i}\in\cic(U_{i})$ be a partition of unity subordinate to $(U_{i})$
and let additionally $\phi_{i}\in\cic(U_{i})$ with $\phi_{i}|_{\supp\varphi_{i}}=1$.
To prove (i), suppose $\bfM\in\RcomS$. By Lemma \ref{comfctlem}
and Theorem \ref{regdecotheo} we have
$$\varphi_{i}\bfM
\in\RcomiS
=\HocomiS+\RczomiS
=\HocomiS+\Gradgrad\htcomi.$$
Hence, $\varphi_{i}\bfM=\bfM_{i}+\Gradgrad u_{i}$ with 
$\bfM_{i}\in\HocomiS$ and $u_{i}\in\htcomi$.
Let $\hat\bfM_{i}$ and $\hat u_{i}$ denote the extensions by zero of $\bfM_{i}$ and $u_{i}$.
Then $\hat\bfM_{i}\in\HocomS$ and $\hat u_{i}\in\htcom$. Thus
$$\bfM
=\sum_{i}\varphi_{i}\bfM
=\sum_{i}\hat\bfM_{i}+\Gradgrad\sum_{i}\hat u_{i}
\in\HocomS+\Gradgrad\htcom,$$
and all applied operations are continuous.
Similarly we proof (ii). 
To show (iii), let $\bfE\in\symRomT$.
By Lemma \ref{comfctlem} and Theorem \ref{regdecotheo} we have
$$\varphi_{i}\bfE
\in\symRomiT
=\HoomiT+\symRzomiT
=\HoomiT+\devGrad\ho(\om_{i}).$$
Hence, $\varphi_{i}\bfE=\bfE_{i}+\devGrad\vecv_{i}$ with 
$\bfE_{i}\in\HoomiT$ and $\vecv_{i}\in\ho(\om_{i})$.
In $\om_{i}$ we observe
\begin{align*}
\varphi_{i}\bfE
=\phi_{i}\varphi_{i}\bfE
&=\phi_{i}\bfE_{i}+\phi_{i}\devGrad\vecv_{i}\\
&=\phi_{i}\bfE_{i}-\dev(\vecv_{i}\cdot\grad^{\top}\phi_{i})+\devGrad(\phi_{i}\vecv_{i})
\in\HoomiT+\devGrad\ho(\om_{i}).
\end{align*}
Let $\hat\bfE_{i}$ and $\hat\vecv_{i}$ denote the extensions by zero 
of $\phi_{i}\bfE_{i}-\dev(\vecv_{i}\cdot\grad^{\top}\phi_{i})$ and $\phi_{i}\vecv_{i}$.
Then $\hat\bfE_{i}\in\HoomT$ and $\hat\vecv_{i}\in\hoom$. Thus
$$\bfE
=\sum_{i}\varphi_{i}\bfE
=\sum_{i}\hat\bfE_{i}+\devGrad\sum_{i}\hat\vecv_{i}
\in\HoomT+\devGrad\hoom,$$
and all applied operations are continuous.
To show (iv), let $\bfM\in\dDomS$.
Then $\divDiv\bfM\in\ltom$ and by Theorem \ref{maintheoregpot}
and Remark \ref{maintheoregpotrem} (ii) there is some $\tilde\bfM\in\HtomS$,
together with a linear and continuous potential operator,
with $\divDiv\tilde\bfM=\divDiv\bfM$. Therefore, we have $\bfM-\tilde\bfM\in\dDzomS$,
completing the proof.
\end{proof}

Applying $\RotcS$, $\DivcT$, and $\symRotT$, $\divDivS$
to the regular decompositions in Theorem \ref{regdecotheogen} we get the following regular potentials.

\begin{theo} 
\label{maintheoregpotgen}
It holds
\begin{align*}
\text{\bf(i)}&
&
R(\RotcS)
=\Rot\RcomS
&=\Rot\HocomS,\\
\text{\bf(ii)}&
&
\RTz^{\bot_{\ltom}}
=R(\DivcT)
=\Div\DcomT
&=\Div\HocomT,\\
\text{\bf(iii)}&
&
R(\symRotT)
=\symRot\symRomT
&=\symRot\HoomT,\\
\text{\bf(iv)}&
&
\ltom 
=R(\divDivS)
=\divDiv\dDomS
&=\divDiv\HtomS
\end{align*}
with corresponding linear and continuous (regular) potential operators
(on the right hand sides).
\end{theo}

Using Theorem \ref{maintheogendom}, 
canonical linear and continuous regular potential operators in the latter theorem are given by
\begin{align}
\mylabel{tildePdef}
\begin{split}
\Pott_{\RotcS}:=\Pott_{\RcomS,\HocomS}(\RotcS)^{-1}
&:\Rot\RcomS\To\HocomS,\\
\Pott_{\DivcT}:=\Pott_{\DcomT,\HocomT}(\DivcT)^{-1}
&:\RTz^{\bot_{\ltom}}\To\HocomT,\\
\Pott_{\symRotT}:=\Pott_{\symRomT,\HoomT}(\symRotT)^{-1}
&:\symRot\symRomT\To\HoomT,\\
\Pott_{\divDivS}:=\Pott_{\dDomS,\HtomS}(\divDivS)^{-1}
&:\ltom\To\HtomS.
\end{split}
\end{align}
We get the following direct regular decompositions.

\begin{cor} 
\label{regdecotheogencor}
The direct regular decompositions
\begin{align*}
\RcomS
&=\Pott_{\RotcS}\Rot\RcomS\dotplus\RczomS,\\
\DcomT
&=\Pott_{\DivcT}\RTz^{\bot_{\ltom}}\dotplus\DczomT,\\
\symRomT
&=\Pott_{\symRotT}\symRot\symRomT\dotplus\symRzomT,\\
\dDomS
&=\Pott_{\divDivS}\ltom\dotplus\dDzomS
\end{align*}
hold. Moreover,
\begin{align*}
\Pott_{\RotcS}\Rot\RcomS
&\subset\HocomS,
&
\Pott_{\symRotT}\symRot\symRomT
&\subset\HoomT,\\
\Pott_{\DivcT}\RTz^{\bot_{\ltom}}
&\subset\HocomT,
&
\Pott_{\divDivS}\ltom
&\subset\HtomS.
\end{align*}
\end{cor}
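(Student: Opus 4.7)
The plan is to proceed exactly as in the proof of Corollary \ref{regdecotheocor}, replacing the topologically-trivial regular potentials by their general analogues $\Pott$ from \eqref{tildePdef}. The inclusions $\Pott_{\RotcS}\Rot\RcomS\subset\HocomS$, $\Pott_{\DivcT}\RTz^{\bot_{\ltom}}\subset\HocomT$, $\Pott_{\symRotT}\symRot\symRomT\subset\HoomT$, and $\Pott_{\divDivS}\ltom\subset\HtomS$ are immediate from the definitions \eqref{tildePdef} together with the mapping properties of the regular decomposition operators supplied by Theorem \ref{regdecotheogen}.

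The decisive point is that each of the four operators in \eqref{tildePdef} is a right inverse of its associated differential operator. I verify this for $\Pott_{\RotcS}$: by Theorem \ref{regdecotheogen} any $\bfN\in\RcomS$ decomposes as $\bfN=\Pott_{\RcomS,\HocomS}\bfN+\Gradgrad\Pott_{\RcomS,\htcom}\bfN$, and the second summand lies in $\Gradgrad\htcom\subset\RczomS$ by Lemma \ref{complexgradGradRotlem}. Hence $\Rot\Pott_{\RcomS,\HocomS}\bfN=\Rot\bfN$, and specialising $\bfN=(\RotcS)^{-1}\bfE$ for $\bfE\in\Rot\RcomS$ yields $\Rot\Pott_{\RotcS}\bfE=\bfE$. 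The same pattern, using the regular decomposition of Theorem \ref{regdecotheogen} together with the complex identities $\DivcT\RotcS=0$, $\symRotT\devGrad=0$, and the trivial observation $\divDiv\,\dDzomS=\{0\}$, gives the corresponding right-inverse property for $\Pott_{\DivcT}$, $\Pott_{\symRotT}$, and $\Pott_{\divDivS}$.

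With the right-inverse property at hand, for $\bfM\in\RcomS$ I split $\bfM=\Pott_{\RotcS}\Rot\bfM+(\bfM-\Pott_{\RotcS}\Rot\bfM)$: the first summand lies in $\Pott_{\RotcS}\Rot\RcomS\subset\HocomS$, and applying $\Rot$ to the second summand yields $\Rot\bfM-\Rot\bfM=0$, so $\bfM-\Pott_{\RotcS}\Rot\bfM\in\RczomS$. For directness, if $\bfM=\Pott_{\RotcS}\bfE\in\RczomS$ with $\bfE\in\Rot\RcomS$, then $0=\Rot\bfM=\Rot\Pott_{\RotcS}\bfE=\bfE$, whence $\bfM=0$. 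The three remaining decompositions for $\DcomT$, $\symRomT$, and $\dDomS$ are proved by verbatim the same argument, with $\Rot$ replaced by $\Div$, $\symRot$, $\divDiv$, respectively. I expect no genuine technical obstacle here; the only mild subtlety is in the $\dDomS$ case, where the kernel piece $\dDzomS$ already appears directly in the regular decomposition of Theorem \ref{regdecotheogen}, so no further appeal to a complex identity is needed to conclude $\divDiv\Pott_{\dDomS,\HtomS}\bfM=\divDiv\bfM$.
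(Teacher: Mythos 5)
Your proof is correct and follows essentially the same route as the paper: the paper's proof of Corollary \ref{regdecotheogencor} only spells out the directness argument (for $\bfM\in\RczomS\cap\Pott_{\RotcS}\Rot\RcomS$, deduce $\bfM=0$ from $0=\Rot\bfM$) and leaves the right-inverse property of the operators $\Pott$ in \eqref{tildePdef} and the resulting surjectivity of the sum implicit. Your write-up makes both steps explicit -- verifying $\Rot\Pott_{\RotcS}=\id$ on $\Rot\RcomS$ via Theorem \ref{regdecotheogen} and the complex inclusions of Lemma \ref{complexgradGradRotlem}, then producing the split $\bfM=\Pott_{\RotcS}\Rot\bfM+(\bfM-\Pott_{\RotcS}\Rot\bfM)$ -- which is exactly what the paper's terse proof relies on without stating it.
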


Note that the second summands on the right hand sides
may be further decomposed by \eqref{RDDDirNeu}, Theorem \ref{maintheoregpotgen}, and \eqref{tildePdef}.

\begin{proof}
For $\bfM\in\RczomS\cap\Pott_{\RotcS}\Rot\RcomS$ we have
$\bfM=\Pott_{\RotcS}\bfN$ with some $\bfN\in\Rot\RcomS$.
Thus $0=\Rot\bfM=\bfN$ showing $\bfM=0$ and hence the directness of the first regular decomposition.
The other assertions follow similarly.
\end{proof}

\begin{rem} 
\label{regdecotheogenrem}
While the results about the regular potentials in Theorem \ref{maintheoregpotgen} hold 
in full generality for all operators, one may wonder 
that the regular decompositions from Theorem \ref{regdecotheogen}
hold in full generality only for (i)-(iii), but not for (iv), i.e., we just have in (iv) 
$$\dDomS
=\HtomS+\dDzomS
\supset\HtomS+\symRot\HoomT.$$
The reason for the failure of the partition of unity argument 
from the proof of Theorem \ref{regdecotheogen} is the following:
Let $\bfM\in\dDomS$. By Lemma \ref{comfctlem} (vi) 
we just get $\varphi_{i}\bfM\in\dDzmoomiS$,
see also the proof of Lemma \ref{cptemblem}.
Using Lemma \ref{Helmholtz1} and Theorem \ref{regdecotheo} we can decompose 
\begin{align*}
\varphi_{i}\bfM=u_{i}\,\bfI+\symRot\bfE_{i}
\in\hoc(\om_{i})\cdot\bfI\dotplus\symRot\HoomiT
\end{align*}
as $\dDzomiS=\symRot\HoomiT$.
In $\om_{i}$ we observe
\begin{align*}
\varphi_{i}\bfM
=\phi_{i}\varphi_{i}\bfM
&=\phi_{i}u_{i}\,\bfI+\phi_{i}\symRot\bfE_{i}\\
&=\phi_{i}u_{i}\,\bfI-\sym(\grad\phi_{i}\times\bfE_{i})+\symRot(\phi_{i}\bfE_{i})
\in\HoomiS+\symRot\HoomiT.
\end{align*}
Let $\hat\bfM_{i}$ and $\hat\bfE_{i}$ denote the extensions by zero 
of $\phi_{i}u_{i}\,\bfI-\sym(\grad\phi_{i}\times\bfE_{i})$ and $\phi_{i}\bfE_{i}$.
Then $\hat\bfM_{i}\in\HoomS$ and $\hat\bfE_{i}\in\HoomT$ and thus
$$\bfM
=\sum_{i}\varphi_{i}\bfM
=\sum_{i}\hat\bfM_{i}+\symRot\sum_{i}\hat\bfE_{i}
\in\HoomS+\symRot\HoomT,$$
and all applied operations are continuous. Therefore, we obtain
$$\HtomS+\symRot\HoomT
\subset\HtomS+\dDzomS
=\dDomS
\subset\HoomS+\symRot\HoomT.$$
So we have lost one Sobolev order in the summand $\HoomS$.
\end{rem}

\section{Application to Biharmonic Problems}
\mylabel{applsec}

By $\Delta^2=\divDiv\Gradgrad$,
a standard (primal) variational formulation of \eqref{primal} in $\rt$ reads as follows: 
For given $f\in\hmtom$, find $u\in\htcom$ such that
\begin{equation} 
\label{var_primal}
\scpLtom{\Gradgrad u}{\Gradgrad\phi}=\scp{f}{\phi}_{\hmtom}
\quad\text{for all}\quad
\phi\in\htcom.
\end{equation}
Existence, uniqueness, and continuous dependence on $f$ of a solution to \eqref{var_primal} is guaranteed 
by the theorem of Lax-Milgram, see, e.g., \cite{necas:12,lions:72}
or Lemma \ref{formulasone}. Note that then 
$$\bfM:=\Gradgrad u
\in\RczomS\ominus_{\ltomS}\harmdSom\subset\ltomS$$
with $\divDiv\bfM=f\in\hmtom$. In other words the operator
\begin{align}
\mylabel{surjdivDiv}
\divDiv:\ltomS\to\hmtom
\end{align}
is surjective and 
\begin{align}
\mylabel{bijdivDiv}
\divDiv:\RczomS\ominus_{\ltomS}\harmdSom\to\hmtom
\end{align}
is bijective and even a topological isomorphism by the bounded inverse theorem.
For our decomposition result we need the following variant 
of the Hilbert complex from Theorem \ref{maintheogendom}. 
$$\begin{CD}
\RTz @> \iota_{\RTz} >>
\hoom @> -\devGrad >>
\symRomT @> \symRotT >> 
\dDzmoomS @> \divDivS >>
\hmoom @> 0 >>
\{0\},
\end{CD}$$
where we recall $\dDzmoomS$ from Lemma \ref{Helmholtz1}.
This is obviously also a closed Hilbert complex
as $\divDiv:\dDzmoomS\to\hmoom$ is surjective as well by \eqref{surjdivDiv}. 
Observe that
$$\HoomS\subset\dDzmoomS\subset\ltomS.$$
For right-hand sides $f\in\hmoom$ we consider the following mixed variational problem 
for $u$ and the Hessian $\bfM$ of $u$:
Find $\bfM\in\dDzmoomS$ and $u\in\hocom$ such that
\begin{align}
\mylabel{var_mixed_final_one}
\scpLtom{\bfM}{\bfPsi}
+\scphmoom{u}{\divDiv\bfPsi}
&=0
& 
&\text{for all}
&
\bfPsi&\in\dDzmoomS,\\
\mylabel{var_mixed_final_two}
\scphmoom{\divDiv\bfM}{\psi}
&=-\scphmoom{f}{\psi}
&
&\text{for all} 
&
\psi&\in\hocom.
\end{align}
The first row and the second row of this mixed problem are variational formulations 
of \eqref{defM} and \eqref{divDivM}, respectively.
We recall the following two results related to these mixed problems from \cite{zulehner-2016-02}.

\begin{theo} 
\label{mainvartheo}
Let $f\in\hmoom$. Then:
\begin{itemize}
\item[\bf(i)]
Problem \eqref{var_mixed_final_one}-\eqref{var_mixed_final_two} 
is a well-posed saddle point problem.
\item[\bf(ii)]
The variational problems \eqref{var_primal} and 
\eqref{var_mixed_final_one}-\eqref{var_mixed_final_two} are equivalent, i.e., 
if $u\in\htcom$ solves \eqref{var_primal}, 
then $\bfM=-\Gradgrad u$ lies in $\dDzmoomS$
and $(\bfM,u)$ solves \eqref{var_mixed_final_one}-\eqref{var_mixed_final_two}.
And, vice versa, if $(\bfM,u)\in\dDzmoomS\times\hocom$ solves 
\eqref{var_mixed_final_one}-\eqref{var_mixed_final_two}, then $u\in\htcom$
with $\Gradgrad u=-\bfM$ and $u$ solves \eqref{var_primal}.
\end{itemize}
\end{theo}

Although only two-dimensional biharmonic problems were considered in \cite{zulehner-2016-02}, 
the proof of the latter theorem is completely identical 
for the three-dimensional case. The same holds for Lemma \ref{Helmholtz1}.

\begin{proof}
To show (i), we first note that $(\bfPhi,\bfPsi)\mapsto\scpLtom{\bfPhi}{\bfPsi}$
is coercive over the kernel of \eqref{var_mixed_final_two}, i.e.,
for $\bfPhi\in\dDzomS$ we have 
$\scpLtom{\bfPhi}{\bfPhi}=\normLtom{\bfPhi}^2=\norm{\bfPhi}_{\dDomS}^2=\norm{\bfPhi}_{\dDzmoomS}^2$.
Moreover, the inf-sup-condition holds, as
\begin{align*}
&\qquad\inf_{0\neq\varphi\in\hocom}
\sup_{0\neq\bfPhi\in\dDzmoomS}
\frac{\scphmoom{\varphi}{\divDiv\bfPhi}}{\normltom{\grad\varphi}\norm{\bfPhi}_{\dDzmoomS}}\\
&\geq\inf_{0\neq\varphi\in\hocom}
\frac{-\scphmoom{\varphi}{\divDiv(\varphi\,\bfI)}}{\normltom{\grad\varphi}\norm{\varphi\,\bfI}_{\dDzmoomS}}
=\inf_{0\neq\varphi\in\hocom}
\frac{\normltom{\grad\varphi}}{\big(\normLtom{\varphi\,\bfI}^2
+\normhmoom{\divDiv(\varphi\,\bfI)}^2\big)^{\oh}}\\
&=\inf_{0\neq\varphi\in\hocom}
\frac{\normltom{\grad\varphi}}{\big(3\normltom{\varphi}^2
+\normltom{\grad\varphi}^2\big)^{\oh}}
\geq(3\,c_{\mathsf{g}}^2+1)^{-\oh}
\end{align*}
by choosing $\bfPhi:=-\varphi\,\bfI\in\hocom\cdot\bfI\subset\dDzmoomS$ and observing 
\begin{align*}
-\scphmoom{\varphi}{\divDiv(\varphi\,\bfI)}
&=-\scphmoom{\varphi}{\div\grad\varphi}
=\normltom{\grad\varphi}^2,\\
\normhmoom{\divDiv(\varphi\,\bfI)}
&=\sup_{0\neq\phi\in\hocom}
\frac{\scphmoom{\phi}{\div\grad\varphi}}{\normltom{\grad\phi}}\\
&=\sup_{0\neq\phi\in\hocom}
\frac{\scpltom{\grad\phi}{\grad\varphi}}{\normltom{\grad\phi}}
=\normltom{\grad\varphi}.
\end{align*}
Note that both the primal problem \eqref{var_primal} and the mixed problem 
\eqref{var_mixed_final_one}-\eqref{var_mixed_final_two} are well-posed. 
So, it suffices to show the first part of (ii) only. The reverse direction follows then automatically.
Let $u\in\htcom$ solve \eqref{var_primal}.
Then $\bfM:=-\Gradgrad u\in\dDzmoomS$ with $\divDiv\bfM=-f$ 
in $\hmtom$ and hence in $\hmoom$. Thus \eqref{var_mixed_final_two} holds. 
Moreover, for $\bfPsi\in\dDzmoomS$ we see
$$\scpLtom{\bfM}{\bfPsi}
=-\scpLtom{\Gradgrad u}{\bfPsi}
=-\scp{u}{\divDiv\bfPsi}_{\hmtom}
=-\scp{u}{\divDiv\bfPsi}_{\hmoom}$$
and hence \eqref{var_mixed_final_one} is true.
Therefore, $(\bfM,u)$ solves \eqref{var_mixed_final_one}-\eqref{var_mixed_final_two}.
\end{proof}

\begin{rem}
\mylabel{{mainvartheo}}
For convenience of the reader, we give additionally a proof of the other direction as well:
If $(\bfM,u)$ in $\dDzmoomS\times\hocom$ solves 
\eqref{var_mixed_final_one}-\eqref{var_mixed_final_two}, then 
$\divDiv\bfM=-f$ in $\hmoom$ and \eqref{var_mixed_final_one} holds.
Especially, \eqref{var_mixed_final_one} holds for 
$\bfPsi\in\HtomS\subset\HoomS\subset\dDzmoomS$, i.e.,
\begin{align}
\mylabel{uhtcom}
-\scpLtom{\bfM}{\bfPsi}
=\scphmoom{u}{\divDiv\bfPsi}
=\scpltom{u}{\divDiv\bfPsi}.
\end{align}
But then \eqref{uhtcom} holds for all $\bfPsi\in\Htom$ as $\sym\bfPsi\in\HtomS$ and 
\begin{align}
\mylabel{uhtcomfull}
-\scpLtom{\bfM}{\bfPsi}
=-\scpLtom{\bfM}{\sym\bfPsi}
=\scpltom{u}{\divDiv\sym\bfPsi}
=\scpltom{u}{\divDiv\bfPsi},
\end{align}
since $\divDiv\skw\bfPsi=0$ by
$$\scpltom{\divDiv\skw\bfPsi}{\phi}
=\scpltom{\skw\bfPsi}{\Gradgrad\phi}
=0$$
for all $\phi\in\cicom$. \eqref{uhtcomfull} yields that $u\in\htcom$
with $\Gradgrad u=-\bfM$. Finally, for all $\phi\in\htcom$
$$\scpLtom{\Gradgrad u}{\Gradgrad\phi}
=-\scpLtom{\bfM}{\Gradgrad\phi}
=-\scp{\divDiv\bfM}{\phi}_{\hmtom}
=\scp{f}{\phi}_{\hmtom},$$
showing that $u\in\htcom$ solves \eqref{var_primal}.
\end{rem}

We note that the decomposition of $\dDzmoomS$ in Lemma \ref{Helmholtz1} 
is different to the Helmholtz type decomposition of the larger space $\ltomS$ 
in Theorem \ref{maintheo} and Theorem \ref{maintheogendom} 
and does not involve the Hessian of scalar functions in $\htcom$.
Using the decomposition of $\dDzmoomS$ in Lemma \ref{Helmholtz1},
we have the following decomposition result for the biharmonic problem.
Let $(\bfM,u)\in\dDzmoomS\times\hocom$ be the unique solution
of \eqref{var_mixed_final_one}-\eqref{var_mixed_final_two}.
Using Lemma \ref{Helmholtz1} we have the following direct decompositions
for $\bfM,\bfPsi\in\dDzmoomS$
\begin{align*}
\bfM=p\,\bfI+\bfM_{0},\quad
\bfPsi=\varphi\,\bfI+\bfPsi_{0},\qquad
p,\varphi\in\hocom,\quad
\bfM_{0},\bfPsi_{0}\in\dDzomS.
\end{align*}
This allows to rewrite \eqref{var_mixed_final_one}-\eqref{var_mixed_final_two}
equivalently in terms of $(p,\bfM_{0},u)$ and for all $(\varphi,\bfPsi_{0},\psi)$, i.e.,
\begin{align*}
\scpLtom{p\,\bfI}{\varphi\,\bfI}
+\scpLtom{\bfM_{0}}{\bfPsi_{0}}
+\scpLtom{p\,\bfI}{\bfPsi_{0}}
+\scpLtom{\bfM_{0}}{\varphi\,\bfI}
+\scphmoom{u}{\divDiv(\varphi\,\bfI)}
&=0,\\
\scphmoom{\divDiv(p\,\bfI)}{\psi}
&=-\scphmoom{f}{\psi}
\intertext{or equivalently}
\scpltom{\grad u}{\grad\varphi}
+3\scpltom{p}{\varphi}
+\scpLtom{\bfM_{0}}{\bfPsi_{0}}
+\scpltom{p}{\tr\bfPsi_{0}}
+\scpltom{\tr\bfM_{0}}{\varphi}
&=0,\\
\scpltom{\grad p}{\grad\psi}
&=-\scphmoom{f}{\psi},
\intertext{which leads to the equivalent system}
\scpltom{\grad u}{\grad\varphi}
+3\scpltom{p}{\varphi}
+\scpltom{\tr\bfM_{0}}{\varphi}
&=0,\\
\scpLtom{\bfM_{0}}{\bfPsi_{0}}
+\scpltom{p}{\tr\bfPsi_{0}}
&=0,\\
\scpltom{\grad p}{\grad\psi}
&=-\scphmoom{f}{\psi}.
\end{align*}

\begin{theo}
\label{almostfinalddzvarformapp}
The variational problem \eqref{var_mixed_final_one}-\eqref{var_mixed_final_two} 
is equivalent to the following well-posed and 
uniquely solvable variational problem.
For $f\in\hmoom$ find $p\in\hocom$, $\bfM_{0}\in\dDzomS$, and $u\in\hocom$ such that
\begin{align}
\mylabel{finalddzvareqone}
\scpltom{\grad u}{\grad\varphi}
+\scpltom{\tr\bfM_{0}}{\varphi}
+3\scpltom{p}{\varphi}
&=0,\\
\mylabel{finalddzvareqtwo}
\scpLtom{\bfM_{0}}{\bfPsi_{0}}
+\scpltom{p}{\tr\bfPsi_{0}}
&=0,\\
\mylabel{finalddzvareqthree}
\scpltom{\grad p}{\grad\psi}
&=-\scphmoom{f}{\psi}
\end{align}
for all $\psi\in\hocom$, $\bfPsi_{0}\in\dDzomS$, and $\varphi\in\hocom$. 
Moreover, the unique solution $(\bfM,u)$
of \eqref{var_mixed_final_one}-\eqref{var_mixed_final_two}
is given by $\bfM:=p\,\bfI+\bfM_{0}$ and $u$
for the unique solution $(p,\bfM_{0},u)$ of \eqref{finalddzvareqone}-\eqref{finalddzvareqthree}.
\end{theo}

If $\om$ is additionally topologically trivial, 
then by Theorem \ref{maintheo} or Theorem \ref{maintheogendom} 
$$\dDzomS
=\symRot\symRomT
=\symRot\big(\symRomT\cap\DczomT\big)$$
and we obtain the following result.

\begin{theo}
\label{almostfinalvarformapp}
Let $\om$ be additionally topologically trivial.
The variational problem \eqref{var_mixed_final_one}-\eqref{var_mixed_final_two} 
is equivalent to the following well-posed and 
uniquely solvable variational problem.
For $f\in\hmoom$ find $p\in\hocom$, $\bfE\in\symRomT\cap\DczomT$, and $u\in\hocom$ such that
\begin{align}
\mylabel{finalvareqone}
\scpltom{\grad u}{\grad\varphi}
+\scpltom{\tr\symRot\bfE}{\varphi}
+3\scpltom{p}{\varphi}
&=0,\\
\mylabel{finalvareqtwo}
\scpLtom{\symRot\bfE}{\symRot\bfPhi}
+\scpltom{p}{\tr\symRot\bfPhi}
&=0,\\
\mylabel{finalvareqthree}
\scpltom{\grad p}{\grad\psi}
&=-\scphmoom{f}{\psi}
\end{align}
for all $\psi\in\hocom$, $\bfPhi\in\symRomT\cap\DczomT$, 
and $\varphi\in\hocom$. 
Moreover, the unique solution $(\bfM,u)$
of \eqref{var_mixed_final_one}-\eqref{var_mixed_final_two}
is given by $\bfM:=p\,\bfI+\symRot\bfE$ and $u$
for the unique solution $(p,\bfE,u)$ of \eqref{finalvareqone}-\eqref{finalvareqthree}.
\end{theo}

Note that, e.g., $\scpltom{\tr\symRot\bfE}{\varphi}=\scpLtom{\symRot\bfE}{\varphi\,\bfI}$
and $3\scpltom{p}{\varphi}=\scpLtom{p\,\bfI}{\varphi\,\bfI}$.

\begin{proof}
\eqref{var_mixed_final_one}-\eqref{var_mixed_final_two}
is equivalent to \eqref{finalddzvareqone}-\eqref{finalddzvareqthree}
and hence also to \eqref{finalvareqone}-\eqref{finalvareqthree},
if the latter system is well-posed.
By Theorem \ref{maintheo} or Theorem \ref{maintheogendom} the bilinear form  
$\scpLtom{\symRot\,\cdot\,}{\symRot\,\cdot\,}$ is coercive over $\symRomT\cap\DczomT$,
which shows the consecutive unique solvability of \eqref{finalvareqone}-\eqref{finalvareqthree}.
\end{proof}

The three problems in the previous theorem are weak formulations 
of the following three second-order problems in strong form. 
A Dirichlet-Poisson problem for the auxiliary scalar function $p$
$$\Delta p=f\quad\text{in }\om,\qquad 
p=0\quad\text{on }\ga,$$
a second-order Neumann type $\Rot\symRot$-$\Div$-system for the auxiliary tensor field $\bfE$
\begin{align*}
\tr\bfE
&=0,
&
\Rot\symRot\bfE
&=-\Rot(p\,\bfI)=\spn\grad p,
&
\Div\bfE
&=0
&
&\text{in }\om,\\
&&
n\times\symRot\bfE
&=-n\times p\,\bfI=p\spn n=0,
&
\bfE\,n
&=0
&
&\text{on }\ga,
\end{align*}
and, finally, a Dirichlet-Poisson problem for the original scalar function $u$
$$\Delta u=3p+\tr\symRot\bfE=\tr(p\,\bfI+\symRot\bfE)\quad\text{in }\om,\qquad 
u=0\quad\text{on }\ga.$$
In other words,
the system \eqref{finalvareqone}-\eqref{finalvareqthree} has triangular structure
$$\begin{bmatrix}
3 & \tr\symRotT & -\mathring{\Delta} \\
\RotcS(\,\cdot\,\bfI) & \RotcS\symRotT &  0 \\
-\mathring{\Delta} & 0 & 0 
\end{bmatrix}
\begin{bmatrix}
p \\[1.3ex]
\bfE \\[1.3ex]
u
\end{bmatrix}
=
\begin{bmatrix}
0 \\[1.3ex]
0 \\[1.3ex]
-f
\end{bmatrix}$$
with $(\tr\symRotT)^{*}=\RotcS(\,\cdot\,\bfI)$
and $\mathring{\Delta}=\div\gradc$.
Indeed, $\bfE\in\symRomT\cap\DczomT$ with
$$\scpLtom{\symRot\bfE}{\symRot\bfPhi}
+\scpltom{p}{\tr\symRot\bfPhi}=0$$
for all $\bfPhi\in\symRomT\cap\DczomT$
is equivalent to $\bfE\in\symRomT\cap\DczomT$ and
\begin{align}
\mylabel{dualsymRotapp}
\scpLtom{\symRot\bfE+p\,\bfI}{\symRot\bfPhi}=0
\end{align}
for all $\bfPhi\in\symRomT$
as by Theorem \ref{maintheo} 
\begin{align}
\mylabel{rangesymRotapp}
\symRot\big(\symRomT\cap\DczomT\big)
=\symRot\symRomT.
\end{align}
Now \eqref{dualsymRotapp} shows that 
$$\symRot\bfE+p\,\bfI\in R(\symRotT)^{\bot_{\Ltom}}=N(\symRotT^{*})=N(\RotcS)=\RczomS,$$
especially $\Rot(\symRot\bfE+p\,\bfI)=0$ in $\om$ and $n\times(\symRot\bfE+p\,\bfI)=0$ on $\ga$. 

Finally, we want to get rid of the complicated space
$\symRomT\cap\DczomT$ in the variational formulation in Theorem \ref{almostfinalvarformapp}.
For a given $p\in\hocom$ the part \eqref{finalvareqtwo} of \eqref{finalvareqone}-\eqref{finalvareqthree}, i.e.,
find a tensor field $\bfE\in\symRomT\cap\DczomT$ such that
\begin{align}
\mylabel{varsymRotpartone}
\scpLtom{\symRot\bfE}{\symRot\bfPhi}
+\scpltom{p}{\tr\symRot\bfPhi}=0
\end{align}
for all $\bfPhi\in\symRomT\cap\DczomT$,
has also a saddle point structure.
By Theorem \ref{maintheo} we have \eqref{rangesymRotapp} as well as
$$\DczomT
=N(\DivcT)
=R(\DivcT^{*})^{\bot_{\ltomT}}
=R(\devGrad)^{\bot_{\ltomT}}
=\big(\devGrad\big(\hoom\cap\RTz^{\bot_{\ltom}}\big)\big)^{\bot_{\ltomT}}.$$
Hence \eqref{varsymRotpartone} is equivalent to 
find $\bfE\in\symRomT$ such that
\begin{align}
\mylabel{varsymRotparttwoone}
\scpLtom{\symRot\bfE}{\symRot\bfPhi}
+\scpltom{p}{\tr\symRot\bfPhi}
&=0,\\
\mylabel{varsymRotparttwotwo}
\scpLtom{\bfE}{\devGrad\theta}
&=0
\end{align}
for all $\bfPhi\in\symRomT$ and $\theta\in\hoom\cap\RTz^{\bot_{\ltom}}$.
Observe that 
$$(\bfE,\vecv):=(\bfE,0)\in\symRomT\times\big(\hoom\cap\RTz^{\bot_{\ltom}}\big)$$
solves the modified variational system
\begin{align}
\mylabel{varsymRotpartthreeone}
\scpLtom{\symRot\bfE}{\symRot\bfPhi}
+\scpLtom{\bfPhi}{\devGrad\vecv}
&=-\scpltom{p}{\tr\symRot\bfPhi},\\
\mylabel{varsymRotpartthreetwo}
\scpLtom{\bfE}{\devGrad\theta}
&=0
\end{align}
for all $\bfPhi\in\symRomT$ and $\theta\in\hoom\cap\RTz^{\bot_{\ltom}}$.
On the other hand, any solution
$$(\bfE,\vecv)\in\symRomT\times\big(\hoom\cap\RTz^{\bot_{\ltom}}\big)$$
of \eqref{varsymRotpartthreeone}-\eqref{varsymRotpartthreetwo}
satisfies $\vecv=0$, as \eqref{varsymRotpartthreeone} tested with 
$$\bfPhi:=\devGrad\vecv\in\devGrad\hoom=\symRzomT$$
shows $\devGrad\vecv=0$ and thus $\vecv\in\RTz$ by Lemma \ref{kerneldevGrad},
yielding $\vecv=0$. 
Note that \eqref{varsymRotpartthreeone}-\eqref{varsymRotpartthreetwo} 
has the saddle point structure
$$\begin{bmatrix}
\mathring{\Rot}_{\bbS}\symRotT & \devGrad \\
-\DivcT & 0
\end{bmatrix}
\begin{bmatrix}
\bfE \\[1.3ex]
\vecv
\end{bmatrix}
=
\begin{bmatrix}
-\mathring{\Rot}_{\bbS}(v\,\bfI) \\[1.3ex]
0
\end{bmatrix},\qquad
(\devGrad)^{*}=-\DivcT.$$

We obtain the following final result.

\begin{theo}
\label{finalvarformapp}
Let $\om$ be additionally topologically trivial.
The variational problem \eqref{finalvareqone}-\eqref{finalvareqthree}
is equivalent to the following well-posed and 
uniquely solvable variational system.
For $f\in\hmoom$ find  $p\in\hocom$, $\bfE\in\symRomT$, 
$\vecv\in\hoom\cap\RTz^{\bot_{\ltom}}$, and $u\in\hocom$ such that
\begin{align}
\mylabel{relfinalvareqone}
\scpltom{\grad u}{\grad\varphi}
+\scpltom{\tr\symRot\bfE}{\varphi}
+3\scpltom{p}{\varphi}
&=0,\\
\mylabel{relfinalvareqtwo}
\scpLtom{\symRot\bfE}{\symRot\bfPhi}
+\scpLtom{\bfPhi}{\devGrad\vecv}
+\scpltom{p}{\tr\symRot\bfPhi}
&=0,\\
\mylabel{relfinalvareqthree}
\scpLtom{\bfE}{\devGrad\theta}
&=0,\\
\mylabel{relfinalvareqfour}
\scpltom{\grad p}{\grad\psi}
&=-\scphmoom{f}{\psi}
\end{align}
for all $\psi\in\hocom$, $\bfPhi\in\symRomT$, 
$\theta\in\hoom\cap\RTz^{\bot_{\ltom}}$, and $\varphi\in\hocom$. 
Moreover, the unique solution $(p,\bfE,\vecv,u)$ of \eqref{relfinalvareqone}-\eqref{relfinalvareqfour}
satisfies $\vecv=0$ and $(p,\bfE,u)$ is the unique solution of \eqref{finalvareqone}-\eqref{finalvareqthree}.
\end{theo}

Note that the system \eqref{relfinalvareqone}-\eqref{relfinalvareqfour} 
has the block triangular saddle point structure
\begin{align}
\mylabel{dsadpointmatrix}
\begin{bmatrix}
3 & \tr\symRotT & 0 & -\mathring{\Delta} \\
\RotcS(\,\cdot\,\bfI) & \RotcS\symRotT & \devGrad & 0 \\
0 & -\DivcT & 0  & 0 \\
-\mathring{\Delta} & 0 & 0 & 0
\end{bmatrix}
\begin{bmatrix}
p \\[1.3ex]
\bfE \\[1.3ex]
\vecv \\[1.3ex]
u
\end{bmatrix}
=
\begin{bmatrix}
0 \\[1.3ex]
0 \\[1.3ex]
0 \\[1.3ex]
-f
\end{bmatrix}.
\end{align}
with $(\tr\symRotT)^{*}=\RotcS(\,\cdot\,\bfI)$
and $(\devGrad)^{*}=-\DivcT$.

\begin{proof}
We only have to show well-posedness of the partial system \eqref{relfinalvareqtwo}-\eqref{relfinalvareqthree}.
First note that by Theorem \ref{maintheo} the bilinear form  
$\scpLtom{\symRot\,\cdot\,}{\symRot\,\cdot\,}$ is coercive over $\symRomT\cap\DczomT$,
which equals the kernel of \eqref{relfinalvareqthree}.
Indeed it follows from \eqref{relfinalvareqthree} that
$$\bfE\in\big(\devGrad\big(\hoom\cap\RTz^{\bot_{\ltom}}\big)\big)^{\bot_{\ltomT}}
=\DczomT.$$
Moreover, the inf-sup-condition is satisfied as by picking
for fixed $0\neq\theta\in\hoom\cap\RTz^{\bot_{\ltom}}$
the tensor $\bfPhi:=\devGrad\theta\in\devGrad\hoom=\symRzomT$ we have
\begin{align*}
\inf_{\substack{0\neq\theta\in\hoom,\\\theta\bot_{\ltom}\RTz}}
\;\;
\sup_{\bfPhi\in\symRomT}
\;
\frac{\scpLtom{\bfPhi}{\devGrad\theta}}{\norm{\bfPhi}_{\symRomT}\normhoom{\theta}}
\geq\inf_{\substack{0\neq\theta\in\hoom,\\\theta\bot_{\ltom}\RTz}}
\;\;
\frac{\normLtom{\devGrad\theta}}{\normhoom{\theta}}
\geq\frac{1}{c}
\end{align*}
by Lemma \ref{kerneldevGrad} (iv).
\end{proof}

\begin{rem}
The corresponding result for the two-dimensional case 
is completely analogous with the exception that the tensor potential 
$\bfE\in\symRomT\cap\DczomT$
is to be replaced by a much simpler vector potential $\vecw\in\hoom$. 
Furthermore, observe that
$$\scpLtom{\symRot\vecw}{\symRot\theta}
=\scpLtom{\symGrad^{\bot}\vecw}{\symGrad^{\bot}\theta}$$
holds for vector fields $\vecw,\theta\in\hoom$. 
Here the superscript $\bot$ denotes the rotation of a vector field by $90^\circ$. 
Note that the complicated second-order Neumann type 
$\Rot\symRot$-$\Div$-system for the auxiliary tensor field $\bfE$
is replaced in 2D by a much simpler Neumann linear elasticity problem,
where the standard Sobolev space $\hoom$ 
resp. $\hoom\cap\RM^{\bot_{\ltom}}$ can be used.
Here $\RM$ denotes the space of rigid motions.
This yields the decomposition result in \cite{zulehner-2016-02} for the two-dimensional case, 
which was shortly mentioned in the introduction.
\end{rem}


\bibliographystyle{plain} 
\bibliography{biharmonic-biblio}

\appendix 
\section{Proofs of Some Useful Identities}

Note that for $a,b\in\rt$ and $A\in\rttt$
\begin{align}
\mylabel{skwspn}
\spn a:\spn b=2\,a\cdot b,\quad
\skw A=\foh\spn\begin{bmatrix}A_{32}-A_{23}\\A_{13}-A_{31}\\A_{21}-A_{12}\end{bmatrix}
\end{align}
hold and hence for skew-symmetric $A$
\begin{align}
\mylabel{spnadj}
\spn a:A=\spn a:\spn\spn^{-1}A=2\,a\cdot\spn^{-1}A,
\end{align}
i.e., $\spn^{*}=2\spn^{-1}$. Moreover, we have for two matrices $A,B$
$$A^{\top}:B=\tr(AB)=\tr(BA)=B^{\top}:A=A:B^{\top}.$$
The assertions of Lemma \ref{formulastwo} and Lemma \ref{formulas}
are contained in the assertions of the following lemma.

\begin{lem}
\label{appformulasproof}
For smooth functions, vector fields and tensor fields we have
\begin{itemize}
\item[\bf(i)] 
$\skw\Gradgrad u =0$,
\item[\bf(ii)]
$\divDiv\bfM=0$, if $\bfM$ is skew-symmetric,
\item[\bf(iii)]
$\Rot(u\,\bfI)=-\spn\grad u$,
\item[\bf(iv)]
$\tr\Rot\bfM=2\div(\spn^{-1}\skw\bfM)$,\\
especially $\tr\Rot\bfM=0$, if $\bfM$ is symmetric,
\item[\bf(v)]
$\Div(u\,\bfI)=\grad u$,
\item[\bf(vi)]
$\tr\Grad\vecv=\div\vecv$,
\item[\bf(vii)]
$\Div(\spn\vecv)=-\rot \vecv$,\\ 
especially $\Div(\skw\bfM)=-\rot \vecv$ for $\vecv=\spn^{-1}\skw\bfM$,
\item[\bf(viii)]
$\Rot(\spn\vecv)=(\div\vecv)\,\bfI-(\Grad\vecv)^{\top}$,\\ 
especially $\Rot\skw\bfM=(\div\vecv)\,\bfI-(\Grad\vecv)^{\top}$ for $\vecv=\spn^{-1}\skw\bfM$,
\item[\bf(ix)]
$\skw\Grad\vecv=\foh\spn\rot \vecv$ and 
$\Rot(\sym\Grad\vecv)=-\Rot(\skw\Grad\vecv)=-\foh\Rot(\spn\rot \vecv)$,
\item[\bf(x)]
$\skw\Rot\bfM=\spn\vecv$
and
$\Div(\symRot\bfM)=-\Div(\skw\Rot\bfM)=\rot \vecv$\\
with
$\vecv=\foh\big(\Div\bfM^{\top}-\grad(\tr\bfM)\big)$,\\ 
especially $\Div(\symRot\bfM)=-\Div(\skw\Rot\bfM)=\foh\rot\Div\bfM^{\top}$, if $\tr\bfM=0$,
\item[\bf(xi)]
$\grad\div\vecv=\frac{3}{2}\Div\dev\,(\Grad\vecv)^{\top}$.
\end{itemize}
These formulas hold for distributions as well.
\end{lem}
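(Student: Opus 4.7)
The plan is to verify all eleven identities first for smooth fields via direct index computation in Cartesian coordinates, and then extend to distributions by the standard testing argument (every identity is of the form $\mathcal{L}u=\mathcal{L}'u$ for differential operators $\mathcal{L},\mathcal{L}'$, so integrating against a test function and integrating by parts reduces the distributional case to the smooth case, which we can carry out on mollifications). Nothing deep happens; the only real bookkeeping is to keep straight that $\Rot$ and $\Div$ act row-wise on tensors, that $\spn$ sends a vector to a skew-symmetric tensor, and to use the two basic algebraic facts \eqref{skwspn}, \eqref{spnadj}.

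I would then arrange the eleven statements into a small dependency graph so that most follow with one line once a handful of ``root'' identities are checked. The easy roots are (i) equality of mixed partials, (v) and (vi) which are immediate from the definitions of $\Grad$, $\Div$, $\tr$, and (vii) $\Div(\spn \vecv)=-\rot \vecv$ which is a three-line component calculation. The only identity that needs a genuine (though still routine) coordinate computation is (viii): writing out $\spn\vecv$ as a $3\times 3$ matrix and applying $\rot$ row-wise produces exactly $(\div\vecv)\,\bfI-(\Grad\vecv)^{\top}$. From (viii) one immediately reads off (iv) by tracing, since $\tr\bigl((\div\vecv)\bfI-(\Grad\vecv)^{\top}\bigr)=3\div\vecv-\div\vecv=2\div\vecv$ and $\Rot$ is trace-free on symmetric tensors (alternatively obtained directly from (iii) applied component-wise). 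Identity (iii) follows from (viii) with $\vecv$ replaced formally, or more directly by noting $(u\,\bfI)_{ij}=u\,\delta_{ij}$ and applying $\rot$ row-wise. Identity (ix) is the combination of the standard elementary fact $2\skw\Grad\vecv=\spn\rot\vecv$ (another three-line coordinate check) with the tautology $\Rot\sym=-\Rot\skw+\Rot$ on the image of $\Grad$, and (xi) follows from (vi) and the definition of $\dev$ applied to $(\Grad\vecv)^{\top}$: $\Div\dev\,(\Grad\vecv)^{\top}=\Div(\Grad\vecv)^{\top}-\tfrac13\grad\tr(\Grad\vecv)^{\top}=\grad\div\vecv-\tfrac13\grad\div\vecv$.

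The compound identities (ii), (x) are then two-step consequences. For (x), write $\vecw:=\spn^{-1}\skw\Rot\bfM$ and apply (iv) to obtain $2\div\vecw=\tr\Rot\bfM$; a direct index expansion of $\Rot\bfM$ (computing only the three entries needed to form $\skw\Rot\bfM$ via \eqref{skwspn}) gives $\vecw=\tfrac12(\Div\bfM^{\top}-\grad\tr\bfM)$, and then $\Div\symRot\bfM=-\Div\skw\Rot\bfM=-\Div\spn\vecw=\rot\vecw$ by (vii). Identity (ii) is immediate from (x) and (vii): if $\bfM=\skw\bfM=\spn\vecv$ for $\vecv=\spn^{-1}\skw\bfM$, then $\Div\bfM=-\rot\vecv$ and hence $\divDiv\bfM=-\div\rot\vecv=0$.

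The main (and essentially only) obstacle is clerical: the conventions for where the differential operators act (rows vs.\ columns) must be handled consistently, so I would fix the convention at the start, namely that $(\Rot\bfM)_{i\cdot}=\rot(\bfM_{i\cdot})$ and $(\Div\bfM)_i=\div(\bfM_{i\cdot})$, and then all the tensor identities reduce to row-by-row applications of the corresponding vector identities. Passing to distributions is automatic: for each identity, pair both sides with $\varphi\in\cicom$ or $\bfPhi\in\Cicom$, integrate by parts (legal because the test fields are compactly supported and smooth), and invoke the smooth case just established.
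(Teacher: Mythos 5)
Your overall strategy---direct coordinate verification in the smooth case followed by the standard testing argument for distributions---coincides with the paper's proof, which likewise reduces (i)--(ix) and the first part of (x) to ``elementary calculations'' and handles the second part of (x) and (xi) via $\Div\Rot=0$ before passing to duality pairings. Two small points need fixing, though. In the derivation of (x) you ``apply (iv) to obtain $2\div\vecw=\tr\Rot\bfM$''; applying (iv) to the tensor $\Rot\bfM$ actually gives $2\div(\spn^{-1}\skw\Rot\bfM)=\tr\Rot\Rot\bfM$, not $\tr\Rot\bfM$, so that sentence is false. Fortunately it is also unused: the first part of (x) is carried entirely by your direct identification $\spn^{-1}\skw\Rot\bfM=\tfrac{1}{2}\big(\Div\bfM^{\top}-\grad\tr\bfM\big)$, so the offending step can simply be deleted. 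Second, the equality $\Div\symRot\bfM=-\Div\skw\Rot\bfM$ rests on the row-wise identity $\Div\Rot\bfM=0$, which you use silently; the paper states it explicitly and you should too. Finally, your route to (xi) via $\Div(\Grad\vecv)^{\top}=\grad\div\vecv$ is slightly more direct than the paper's, which instead expands $0=\Div\Rot\spn\vecv$; both are fine.
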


\begin{proof}
(i)-(ix) and the first identity in (x) follow by elementary calculations. 
For the second identity in (x) observe that 
$0=\Div\Rot\bfM=\Div(\symRot\bfM)+\Div(\skw\Rot\bfM)$
for $\bfM\in\Cic(\rt)$ and hence,
using the first identity in (x) and (vii), we obtain
$$\Div(\symRot\bfM)=-\Div(\skw\Rot\bfM)=-\Div(\spn\vecv)=\rot \vecv.$$
To see (xi) we compute
\begin{align*}
0
&=\Div\Rot\spn\vecv
=\Div\big((\div\vecv)\,\bfI\big)
-\Div(\Grad\vecv)^{\top}\\
&=\Div\big((\div\vecv)\,\bfI\big)
-\Div\dev\,(\Grad\vecv)^{\top}
-\frac{1}{3}\Div\big((\tr\,(\Grad\vecv)^{\top})\,\bfI\big)\\
&=\frac{2}{3}\Div\big((\div\vecv)\,\bfI\big)
-\Div\dev\,(\Grad\vecv)^{\top}
=\frac{2}{3}\grad\div\vecv
-\Div\dev\,(\Grad\vecv)^{\top}.
\end{align*}
Therefore, the stated formulas hold in the smooth case.
By density these formulas extend to $u$, $\vecv$, and $\bfM$ in respective Sobolev spaces.
Let us give proofs for distributions as well.
For this, let $m\in\nz$ and $u\in\hmmom$, $\vecv\in\hmmom$, $\bfM\in\Hmmom$
and $\varphi\in\cicom$, $\theta\in\cicom$, and $\bfPhi\in\Cicom$. By
$$\scphmmom{u}{\p_{i}\p_{j}\varphi}
=\scphmmom{u}{\p_{j}\p_{i}\varphi},
\quad\text{or (with (ii))}\quad
\scphmmom{u}{\divDiv\skw\bfPhi}=0,$$
we see that $\Gradgrad u\in\hmmmtom$ is symmetric and hence (i). 
Note that we observe formally $(\skw\Gradgrad)^{*}=\divDiv\skw$.
If $\bfM$ is skew-symmetric we have $\scpHmmom{\bfM}{\Gradgrad\varphi}=0$, i.e., (ii). 
We compute with (iv)
\begin{align*}
\scpHmmom{u\,\bfI}{\Rot\bfPhi}
&=\scphmmom{u}{\tr(\Rot\bfPhi)}
=2\scphmmom{u}{\div(\spn^{-1}\skw\bfPhi)}\\
&=-\scp{\spn\grad u}{\skw\bfPhi}_{\Hmmmoom}
=-\scp{\spn\grad u}{\bfPhi}_{\Hmmmoom},
\end{align*}
showing (iii). Formally, $(\tr\Rot)^{*}=\Rot(\,\cdot\,\bfI)$. Hence by (iii)
\begin{align*}
\scpHmmom{\bfM}{\Rot(\varphi\,\bfI)}
&=-\scpHmmom{\bfM}{\spn\grad\varphi}
=-\scpHmmom{\skw\bfM}{\spn\grad\varphi}\\
&=-2\scphmmom{\spn^{-1}\skw\bfM}{\grad\varphi}
=2\scp{\div\spn^{-1}\skw\bfM}{\varphi}_{\hmmmoom},
\end{align*}
yielding (iv). (v) follows by
\begin{align*}
-\scpHmmom{u\,\bfI}{\Grad\theta}
=-\scphmmom{u}{\tr(\Grad\theta)}
=-\scphmmom{u}{\div\theta}.
\end{align*}
Formally, $(\tr\Grad)^{*}=-\Div(\,\cdot\,\bfI)$. Thus by (v)
\begin{align*}
-\scphmmom{\vecv}{\Div(\varphi\,\bfI)}
=-\scphmmom{\vecv}{\grad\varphi}
=\scp{\div\vecv}{\varphi}_{\hmmmoom},
\end{align*}
yielding (vi). 
We have the formal adjoint $(\Div\spn)^{*}=(\Div\skw\spn)^{*}=-2\spn^{-1}\skw\Grad$, 
and by the formula $2\skw\Grad\theta=\spn\rot\theta$ from (ix), we obtain (vii), i.e.,
\begin{align*}
-2\scphmmom{\vecv}{\spn^{-1}\skw\Grad\theta}
=-\scphmmom{\vecv}{\rot\theta}.
\end{align*}
Using the formal adjoint $(\Rot\spn)^{*}=2\spn^{-1}\skw\Rot$ we calculate with (x)
\begin{align*}
2\scphmmom{\vecv}{\spn^{-1}\skw\Rot\bfPhi}
&=\scphmmom{\vecv}{\Div\bfPhi^{\top}-\grad(\tr\bfPhi)}\\
&=-\scp{\Grad\vecv}{\bfPhi^{\top}}_{\Hmmmoom}
+\scp{\div\vecv}{\tr\bfPhi}_{\hmmmoom},
\end{align*}
i.e., (viii) holds. Formally $(\skw\Grad)^{*}=-\Div\skw$. Using (vii) we see
\begin{align*}
-\scphmmom{\vecv}{\Div\skw\bfPhi}
=\scphmmom{\vecv}{\rot\spn^{-1}\skw\bfPhi}
=\foh\scp{\spn\rot \vecv}{\skw\bfPhi}_{\Hmmmoom},
\end{align*}
which proves (ix). We compute by (viii)
\begin{align*}
\scpHmmom{\bfM}{\Rot\skw\bfPhi}
&=\scphmmom{\tr\bfM}{\div(\spn^{-1}\skw\bfPhi)}
-\scpHmmom{\bfM^{\top}}{\Grad(\spn^{-1}\skw\bfPhi)}\\
&=-\scp{\grad(\tr\bfM)}{\spn^{-1}\skw\bfPhi}_{\hmmmoom}
+\scp{\Div\bfM^{\top}}{\spn^{-1}\skw\bfPhi}_{\hmmmoom}\\
&=-\foh\scp{\spn(\grad\tr\bfM)}{\skw\bfPhi}_{\Hmmmoom}
+\foh\scp{\spn\Div\bfM^{\top}}{\skw\bfPhi}_{\Hmmmoom},
\end{align*}
showing the first formula in (x) and the second one follows by $\Div\Rot=0$ and (vii).
To prove (xi) we observe
$$\scphmmom{\vecv}{\Div(\devGrad\theta)^{\top}}
=\scphmmom{\vecv}{\Div\dev(\Grad\theta)^{\top}}
=\frac{2}{3}\scphmmom{\vecv}{\grad\div\theta},$$
completing the proof.
\end{proof}

\begin{proof}[Proof of Lemma \ref{comfctlem}]
For $\bfM\in\RcomS$ there exists a sequence $(\bfPhi_{n})\subset\Cicom\cap\ltomS$
with $\bfPhi_{n}\to\bfM$ in $\Rom$. But then $(\varphi\bfPhi_{n})\subset\Cicom\cap\ltomS$
with $\varphi\bfPhi_{n}\to\varphi\bfM$ in $\Rom$,
proving $\varphi\bfM\in\RcomS$, as we have
$\Rot(\varphi\bfPhi_{n})=\varphi\Rot\bfPhi_{n}+\grad\varphi\times\bfPhi_{n}$.
This formula also shows for $\bfPsi\in\Cicom$ \big(note that $\varphi\bfPsi\in\Cicom$\big)
\begin{align}
\label{partintRot}
\begin{split}
\scpLtom{\varphi\bfM}{\Rot\bfPsi}
&=\scpLtom{\bfM}{\varphi\Rot\bfPsi}
=\scpLtom{\bfM}{\Rot(\varphi\,\bfPsi)}
-\scpLtom{\bfM}{\grad\varphi\times\bfPsi}\\
&=\scpLtom{\Rot\bfM}{\varphi\bfPsi}
+\scpLtom{\grad\varphi\times\bfM}{\bfPsi},
\end{split}
\end{align}
and thus $\Rot(\varphi\bfM)=\varphi\Rot\bfM+\grad\varphi\times\bfM$.
Analogously we prove the other cases of (i).
Similarly we show (iii) using the formula
$\Div(\varphi\bfPhi_{n})=\varphi\Div\bfPhi_{n}+\grad\varphi\cdot\bfPhi_{n}$.
To show (ii), let $\bfM\in\RomS$. Then $\varphi\bfM\in\ltomS$ and
\eqref{partintRot} shows $\varphi\bfM\in\RomS$ with the desired formula.
Analogously the other cases of (ii) follow.
Similarly we prove (iv).
Let $\bfE\in\symRomT$ and $\bfPhi\in\Cicom$. Then $\varphi\bfE\in\ltomT$ and
with $\varphi\,\bfPhi\in\Cicom$ we get
\begin{align*}
\scpLtom{\varphi\bfE}{\Rot\sym\bfPhi}
&=\scpLtom{\bfE}{\varphi\Rot\sym\bfPhi}
=\scpLtom{\bfE}{\Rot\sym(\varphi\,\bfPhi)}
-\scpLtom{\bfE}{\grad\varphi\times\sym\bfPhi}\\
&=\scpLtom{\symRot\bfE}{\varphi\bfPhi}
+\scpLtom{\grad\varphi\times\bfE}{\sym\bfPhi},
\end{align*}
which shows $\varphi\bfE\in\symRomT$ and 
$\symRot(\varphi\bfE)=\varphi\symRot\bfE+\sym(\grad\varphi\times\bfE)$
and hence (v). To prove (vi), let $\bfM\in\dDomS$ and $\phi\in\cicom$. 
Then $\varphi\bfM\in\ltomS$ and we compute by
\begin{align*}
\Gradgrad(\varphi\,\phi)
&=\varphi\Gradgrad\phi
+\phi\Gradgrad\varphi
+2\sym\big((\grad\varphi)(\grad\phi)^{\top}\big),\\
(\grad\varphi)(\grad\phi)^{\top}
&=\Grad(\phi\grad\varphi)
-\phi\Gradgrad\varphi
\intertext{the identity}
\Gradgrad(\varphi\,\phi)
&=\varphi\Gradgrad\phi
-\phi\Gradgrad\varphi
+2\sym\Grad(\phi\grad\varphi).
\end{align*}
Finally with $\varphi\phi\in\cicom$ we get
\begin{align*}
&\qquad\scpLtom{\varphi\bfM}{\Gradgrad\phi}
=\scpLtom{\bfM}{\varphi\Gradgrad\phi}\\
&=\bscp{\bfM}{\Gradgrad(\varphi\,\phi)}_{\Ltom}
+\scpLtom{\bfM}{\phi\Gradgrad\varphi}
-2\bscp{\bfM}{\sym\Grad(\phi\grad\varphi)}_{\Ltom}\\
&=\scpltom{\divDiv\bfM}{\varphi\,\phi}
+\scpLtom{\bfM\!:\!\Gradgrad\varphi}{\phi}
-2\bscp{\bfM}{\Grad(\phi\grad\varphi)}_{\Ltom}\\
&=\scpltom{\varphi\divDiv\bfM}{\phi}
+\bscp{\tr\,(\bfM\Gradgrad\varphi)}{\phi}_{\ltom}
+2\underbrace{\scphmoom{\Div\bfM}{\phi\grad\varphi}}_{\displaystyle=\scphmoom{\Div\bfM\cdot\grad\varphi}{\phi}},
\end{align*}
which shows (vi), i.e., $\varphi\bfM\in\dDzmoomS$ and 
$$\divDiv(\varphi\bfM)
=\varphi\divDiv\bfM
+2\grad\varphi\cdot\Div\bfM
+\tr\,(\bfM\Gradgrad\varphi)\in\hmoom.$$
The proof is finished.
\end{proof}

\end{document}